\newcounter{numberofremark}
\newcommand\nothing[1]{}
\newcommand{\dcl}{\DeclareMathOperator}
\dcl\cdet{cdet} \dcl\Sp{Specm} \dcl\depth{depth} \dcl\im{Im} \dcl\Span{span} \dcl\Ker{Ker} \dcl\Specm{Specm}
\dcl\Supp{Supp} \dcl\codim{codim} \dcl\Y{Y} \dcl\gl{\mathfrak{gl}}    \dcl\U{U} \dcl\T{T}
\dcl\qdet{qdet} \dcl\sgn{sgn} \dcl\gr{gr} \dcl\diag{diag}
\dcl\g{\mathfrak{g}} \dcl\C{\mathbb C} \dcl\dd{{\mathrm d}}
\newcommand\sm{{\mathsf m}}
\newcommand\Ga{{\Gamma}}
\newlength\yStones
\newlength\xStones
\newlength\xxStones
\def\Stones{\pst@object{Stones}}
\def\Stones@i#1{%
  \pst@killglue%
  \begingroup%
  \use@par%
  \setlength\xxStones{\xStones}%
  \expandafter\Stones@ii#1,,\@nil
  \endgroup
  \global\addtolength\xStones{0.6cm}%
  \global\addtolength\yStones{-7.5mm}}%
\def\Stones@ii#1,#2,#3\@nil{%
  \rput(\xxStones,\yStones){%
    \psframebox[framesep=0]{%
      \parbox[c][6mm][c]{11mm}{\makebox[11mm]{$#1$}}}}%
  \addtolength\xxStones{1.2cm}%
  \ifx\relax#2\relax\else\Stones@ii#2,#3\@nil\fi}
\def\Stone#1{\fbox{\makebox[10mm]{\strut#1}}\kern2pt}
\newtheorem{theorem}{Theorem}[section]
\newtheorem{lemma}[theorem]{Lemma}
\newtheorem{corollary}[theorem]{Corollary}
\newtheorem{condition}[theorem]{Condition}
\newtheorem{proposition}[theorem]{Proposition}
\newtheorem{example}[theorem]{Example}
\newtheorem{remark}[theorem]{Remark}
\newtheorem{conjecture}[theorem]{Conjecture}
\newtheorem{definition}[theorem]{Definition}
\begin{document}
\title{Combinatorial construction of Gelfand-Tsetlin modules for $\mathfrak{gl}_n$}
\author{Vyacheslav Futorny}
\address{Instituto de Matem\'atica e Estat\'istica, Universidade de S\~ao
Paulo,  S\~ao Paulo SP, Brasil}\email{futorny@ime.usp.br}
\author{Luis Enrique Ramirez}
\address{Universidade Federal do ABC, Santo Andr\'e SP, Brasil} \email{luis.enrique@ufabc.edu.br}
\author{Jian Zhang}
\address{Instituto de Matem\'atica e Estat\'istica, Universidade de S\~ao
Paulo,  S\~ao Paulo SP, Brasil} \email{zhang@ime.usp.br}

\begin{abstract}
We propose a new effective method of constructing explicitly  \\Gelfand -Tsetlin modules for $\gl_n$.  We obtain a large family of simple   modules  that have a basis consisting of
Gelfand-Tsetlin tableaux, the action of the Lie algebra is given by the  Gelfand-Tsetlin formulas and with all Gelfand-Tsetlin multiplicities equal $1$.  As an application of our construction we prove
 necessary and sufficient condition  for the Gelfand and Graev's continuation construction to define a module which was conjectured by Lemire and Patera.

\end{abstract}

\subjclass[2010]{Primary 17B10}
\keywords{Gelfand-Tsetlin modules,  Gelfand-Tsetlin basis, tableaux realization}

\maketitle
\section{Introduction}

A classical paper of  Gelfand  and  Tsetlin \cite{GT}
describes a basis of simple finite dimensional modules over the Lie algebra $\gl_n$.  This is one of the most remarkable results of the representation theory of  Lie algebras which triggered
a strong interest and initiated a development of  the theory of Gelfand-Tsetlin modules in  \cite{DFO}, \cite{Gr1}, \cite{Gr2}, \cite{Maz1}, \cite{Maz2}, \cite{m:gtsb}, \cite{Zh}, among others.
Gelfand-Tsetlin representations are related to  Gelfand-Tsetlin integrable systems studied by Guillemin and Sternberg \cite{GS}, Kostant and Wallach \cite{KW1}, \cite{KW2}, Colarusso and Evens
\cite{CE1}, \cite{CE2}.  Each tableau in the basis of a finite dimensional representation is an eigenvector  of  the Gelfand-Tsetlin subalgebra $\Gamma$,  certain maximal commutative subalgebra of the universal enveloping algebra of $\gl_n$. Hence, any such  tableau corresponds to a maximal ideal of $\Gamma$.
Gelfand-Tsetlin theory had a successful development for infinite dimensional representations in \cite{O}, \cite{FO} where it was shown that simple Gelfand-Tsetlin modules are parametrized up to some finiteness by the maximal ideals of $\Gamma$.
The significance of the class of Gelfand-Tsetlin modules is in the fact that they form the largest subcategory of $\gl_n$-modules  (in particular weight modules with respect to a fixed Cartan subalgebra) where there is some understanding of simple modules. The main remaining problem
is how to construct explicitly these modules.

There were essentially two main approaches to generalize Gelfand-Tsetlin basis and construct explicitly new simple modules. One, starting from \cite{DFO},
was aiming to construct {\it generic} Gelfand-Tsetlin modules, that is those having a basis consisting of tableaux with no integer differences between the entries of the same row with the exception of the top row. These simple  modules were described in \cite{FGR2}.
Next step was to consider
$1$-singular case when there is just one pair in only one row with integer difference. A break through was  a  paper \cite{FGR3} (see also \cite{FGR4}, \cite{Za}, \cite{Vi1})
 where such modules were explicitly constructed, followed by  general constructions of certain "universal" modules in \cite{RZ} and \cite{Vi2}.  

A different approach in constructing new simple modules is due to Gelfand and Graev \cite{GG}  who
 presented a systematic study of formal analytic continuations
of both the labelling and the algebra structure of finite dimensional representations of $\mathfrak{gl}_n$.
Imposing certain conditions  on entries of a  tableau Gelfand and Graev described new infinite dimensional
simple modules with a basis consisting of tableaux and the algebra action given by the classical Gelfand-Tsetlin formulas.  We will call this condition
 {\it GG-condition}.
However,
Lemire and Patera \cite{LP} showed that
some of these analytic continuations, in fact,  are not representations. They conjectured
 a necessary and sufficient condition, here called the {\it LP-condition}, for Gelfand-Graev continuation to define a module and proved it for $\gl_3$ and $\gl_4$ (partial cases).

 Our first result establishes this conjecture.

 \

 \noindent {\bf Theorem I.} Gelfand-Graev continuation defines a $\gl_n$-module if and only if it satisfies the LP condition.

 \

We propose a new combinatorial method of constructing simple Gelfand-Tsetlin modules by continuation
from tableau satisfying more general conditions which we call, following the tradition,
{\it FRZ-condition}.  Any tableau satisfying the LP-condition also satisfies the FRZ-condition but the latter is much more general.
Each tableau $L$ satisfying the FRZ-condition defines  the maximal ideal $\sm_L$ of the Gelfand-Tsetlin subalgebra
and
 a  Gelfand-Tsetlin module $V(L)$ with ideal $\sm_L$  in its Gelfand-Tsetlin support.
 A tableau $L$ is {\it critical} if it has equal entries in one or more rows different from the top row. Otherwise, tableau is noncritical.
 Our second main result is

\

 \noindent {\bf Theorem II.}  Let $L$ be a tableau  satisfying the FRZ-condition. There exists a unique simple Gelfand-Tsetlin
 $\gl_n$-module  $V(L)$ with maximal ideal $\sm_L$ in its Gelfand-Tsetlin support having the following properties:
 \begin{itemize}
 \item $V(L)$ has a basis consisting of noncritical tableaux with standard action of the generators of $\gl_n$.
 \item All Gelfand-Tsetlin multiplicities of $V(L)$ are bounded by $1$.
 \end{itemize}

\

We will call the class of  Gelfand-Tsetlin $\gl_n$-modules from Theorem II {\it admissible} modules.
Hence, we have a combinatorial way to explicitly construct a vast number of new simple  Gelfand-Tsetlin modules.
Some examples of admissible modules were constructed by Mazorchuk \cite{Maz1}, \cite{Maz3}.\\

 It is interesting to know the place of admissible modules in the category of all
Gelfand-Tsetlin modules. We state the following conjecture\\

 \noindent {\bf Conjecture 1.} If $V$ is simple  Gelfand-Tsetlin $\gl_n$-module with Gelfand-Tsetlin multiplicities $1$ which has  a basis consisting of noncritical
 tableaux with standard action of the generators of $\gl_n$ then $V\simeq V(L)$ for some tableau $L$ satisfying the FRZ-condition.\\
At the moment  Conjecture 1 is known to be true for $n\leq 3$, see also Conjecture \ref{conj-tableau}. \\


\

\noindent{\bf Acknowledgements.}  V.F. is
supported in part by  CNPq  (304467/2017-0) and by
Fapesp  (2014/09310-5). L.E.R. is supported by 
Fapesp grant (2018/17955-7).
J. Z. is supported by  Fapesp grant (2015/05927-0).

\section{Notation and conventions}

Throughout the paper we fix an integer $n\geq 2$. The ground field will be ${\mathbb C}$.  For $a \in {\mathbb Z}$, we write $\mathbb Z_{\geq a}$ for the set of all integers $m$ such that $m \geq a$. Similarly, we define $\mathbb Z_{< a}$.   By $\gl_n$ we denote the general linear Lie
algebra consisting of all $n\times n$ complex matrices, and by $\{E_{i,j}\mid 1\leq i,j \leq n\}$  - the
standard basis of $\gl_n$ of elementary matrices.
As a Lie algebra, $\gl_n$ is  generated by $E_{i,i+1}, E_{i+1,i},1\leq i\leq n-1$ and $E_{i,i}, 1\leq i\leq n$.
We fix the standard Cartan subalgebra  $\mathfrak h$, the standard triangular decomposition and the corresponding basis of simple roots of $\gl_n$.  The weights of $\gl_n$ will be written as $n$-tuples $(\lambda_1,...,\lambda_n)$.

Let  $\mathfrak{Gl}_n$ be the free Lie algebra on generators $e_{i},f_{i},1\leq i\leq n-1$, and $H_{i},1\leq i\leq n$. There is a homomorphism $\mathfrak{Gl}_n\rightarrow \gl_n$ given by
$e_i\mapsto E_{i,i+1}, f_i\mapsto E_{i+1,i},1\leq i\leq n-1 $ and $H_i\mapsto E_{ii},1\leq i\leq n$. The kernel $\mathfrak{k}_n$ of this homomorphism is generated by the following elements
\begin{align}
 &[H_i,H_j], \quad 1\leq i,j\leq n,\label{h_i relation}\\
&[e_i,f_j]-\delta_{ij}(H_i-H_{i+1}),\quad 1\leq i,j\leq n-1,\label{e_i, f_j relation}\\
&[H_j,e_i]-(\delta_{ij}-\delta_{i+1,j})e_i, \label{h with e_i relation}\\
&[H_i,f_j]+(\delta_{ij}-\delta_{i+1,j})f_j,\label{h with f_j relation}\\
&[e_i,[e_i,e_j]], [f_i,[f_i,f_j]]  \quad ,\quad 1\leq i,j\leq n-1,|i-j|=1,\label{Serre relations}\\
&[e_i,e_j],[f_i,f_j]  \quad ,\quad 1\leq i,j\leq n-1,|i-j|>1\label{ee and ff relations}.
\end{align}

Given a Lie algebra $\mathfrak a$ we denote its universal enveloping algebra by $U(\mathfrak a)$. Throughout the paper we abbreviate $U = U(\gl_n)$.
For a commutative ring $R$, by ${\rm Specm}\, R$ we denote the set of maximal ideals of $R$. We will write vectors in $\mathbb{C}^{\frac{n(n+1)}{2}}$ as $
L=(l_{ij})=(l_{n1},...,l_{nn}|l_{n-1,1},...,l_{n-1,n-1}| \cdots|l_{21}, l_{22}|l_{11})$. We denote ${\mathbb Z}_0^{\frac{n(n+1)}{2}}:=\{z\in {\mathbb Z}^{\frac{n(n+1)}{2}}|z_{ni}=0,1\leq i\leq n\}$. For $1\leq j \leq i \leq n$, $\boldsymbol{\delta}^{ij} \in{\mathbb Z}_0^{\frac{n(n+1)}{2}}$ is defined by  $(\boldsymbol{\delta}^{ij})_{ij}=1$ and all other $(\boldsymbol{\delta}^{ij})_{k\ell}$ are zero.

For $i>0$ by $S_i$ we denote the $i$th symmetric group.  Throughout the paper we set  $G:=S_n\times\cdots \times S_1$.
 Finally, given $a,b\in\mathbb{C}$, we will write $a\geq b\ (\text{respectively }a> b)$ if $a-b\in\mathbb{Z}_{\geq0}$ (respectively $\mathbb{Z}_{>0}$).

\section{Gelfand-Tsetlin Theorem and Gelfand-Tsetlin modules}

In 1950, I Gelfand and M. Tsetlin gave an explicit realization of all simple finite dimensional modules for $\mathfrak{gl}_n$. Let us recall the construction.

\begin{definition}
Given $L=(l_{ij})\in\mathbb{C}^{\frac{n(n+1)}{2}}$, denote by $T(L)$ the array

\begin{center}

\Stone{\mbox{ \scriptsize {$l_{n1}$}}}\Stone{\mbox{ \scriptsize {$l_{n2}$}}}\hspace{1cm} $\cdots$ \hspace{1cm} \Stone{\mbox{ \scriptsize {$l_{n,n-1}$}}}\Stone{\mbox{ \scriptsize {$l_{nn}$}}}\\[0.2pt]
\Stone{\mbox{ \scriptsize {$l_{n-1,1}$}}}\hspace{1.5cm} $\cdots$ \hspace{1.5cm} \Stone{\mbox{ \tiny {$l_{n-1,n-1}$}}}\\[0.3cm]
\hspace{0.2cm}$\cdots$ \hspace{0.8cm} $\cdots$ \hspace{0.8cm} $\cdots$\\[0.3cm]
\Stone{\mbox{ \scriptsize {$l_{21}$}}}\Stone{\mbox{ \scriptsize {$l_{22}$}}}\\[0.2pt]
\Stone{\mbox{ \scriptsize {$l_{11}$}}}\\
\medskip
\end{center}
Such an array will be called a \it{Gelfand-Tsetlin tableau} of height $n$. A Gelfand-Tsetlin tableau of height $n$ is called \it{standard} if
$l_{ki}-l_{k-1,i}\in\mathbb{Z}_{\geq 0}$ and $l_{k-1,i}-l_{k,i+1}\in\mathbb{Z}_{> 0}$ for all $1\leq i\leq k\leq  n$.
\end{definition}

\begin{theorem}[\cite{GT}]\label{Gelfand-Tsetlin theorem}
Let $\lambda=(\lambda_{1},\ldots,\lambda_{n})$ be an integral dominant $\mathfrak{gl}_n$-weight. The vector space $L(\lambda)$ spanned by all standard tableaux $T(L)$ with fixed top row $l_{nj}=\lambda_j-j+1$ is a $\mathfrak{gl}_{n}$-module. The action of $\mathfrak{gl}_{n}$ on $L(\lambda)$ is given by the following explicit formulae. 

\begin{equation}\label{Gelfand-Tsetlin formulas}
\begin{split}
E_{k,k+1}(T(L))&=-\sum_{i=1}^{k}\left(\frac{\prod_{j=1}^{k+1}(l_{ki}-l_{k+1,j})}{\prod_{j\neq i}^{k}(l_{ki}-l_{kj})}\right)T(L+\boldsymbol{\delta}^{ki}),\\ E_{k+1,k}(T(L))&=\sum_{i=1}^{k}\left(\frac{\prod_{j=1}^{k-1}(l_{ki}-l_{k-1,j})}{\prod_{j\neq i}^{k}(l_{ki}-l_{kj})}\right)T(L-\boldsymbol{\delta}^{ki}),\\
E_{kk}(T(L))&=\left(k-1+\sum_{i=1}^{k}l_{ki}-\sum_{i=1}^{k-1}l_{k-1,i}\right)T(L),
\end{split}
\end{equation}
where we use the convention that if the new  tableau $T(L\pm\boldsymbol{\delta}^{ki})$ is not standard, then the corresponding summand of $E_{k,k+1}(T(L))$ or $E_{k+1,k}(T(L))$ is zero by definition. Moreover, every simple finite dimensional $\mathfrak{gl}_{n}$-module  is isomorphic to $L(\lambda)$ for some $ \lambda$.
\end{theorem}

 For $m\leqslant n$, $\mathfrak{gl}_{m}$ naturally identifies with the
subalgebra of $\gl_n$ spanned by $\{ E_{ij}\,|\, 1\leq i,j\leq m \}$.
We have the following chain
$$\gl_1\subset \gl_2\subset \ldots \subset \gl_n.$$
It induces  the chain $U(\gl_{1})\subset$ $U(\gl_{2})$ $\ldots$ $\subset
U(\gl_{n})$ for the universal enveloping algebras. Let
$Z_{m}$ be the center of $U(\gl_{m})$, $1\leq m\leq n$.
The subalgebra ${\Ga}$ of $U$ generated by the $
Z_m, 1\leq m\leq n  $ is called its  Gelfand-Tsetlin subalgebra (cf \cite{DFO}).

We define now our main objects.

\begin{definition}
\label{definition-of-GZ-modules} A finitely generated $U$-module
$M$ is called a \it{Gelfand-Tsetlin module } if

\begin{equation}\label{equation-Gelfand-Tsetlin-module-def}
M=\bigoplus_{\sm\in\Sp\Ga}M(\sm),
\end{equation}

where $$M(\sm)=\{v\in M\ |\ \sm^{k}v=0 \text{ for some }k\geq 0\}.$$
The {\it Gelfand-Tsetlin support} of $M$ is the set
$\Supp_{GT}(M):=\{\sm\in\Sp\Ga\ |\ M(\sm)\neq 0\}$. The {\it Gelfand-Tsetlin multiplicity} of $\sm$ in $M$ is the dimension of $M(\sm)$.
\end{definition}

The category of  Gelfand-Tsetlin modules is a full subcategory of the category of (weight) $\gl_n$-modules. It is closed under the operations of taking submodules, quotients, finite extensions and finite direct sums (cf. \cite{DFO}, \cite{FGR3}).


The elements $\{c_{mk}\}_{1\leq k\leq m\leq n}$ defined by 
 \begin{equation}\label{equ_3}
c_{mk } \ = \ \displaystyle {\sum_{(i_1,\ldots,i_k)\in \{
1,\ldots,m \}^k}} E_{i_1 i_2}E_{i_2 i_3}\ldots E_{i_k i_1}.
\end{equation}
generate $\Gamma$ (see \cite{Zh} Chap. IX, Section $59$). Let $\Lambda$ be the polynomial
algebra in the variables $\{\lambda_{ij}\,|$ $1\leqslant j\leqslant
i\leqslant n \}$.
The
  natural
action of the symmetric group  $S_i$ on $\{\lambda_{ij}\,|$ $1\leqslant j\leqslant
i\}$ induces an action of $G$ on $\Lambda$. There is a natural embedding $\imath:{\Ga}{\longrightarrow}$ $\Lambda$
given by $\imath(c_{mk})=\gamma_{mk}(\lambda)$ where
\begin{equation}\label{def-gamma}
\gamma_{mk}(\lambda) \  = \ \sum_{i=1}^m
(\lambda_{mi}+m-1)^k \prod_{j\ne i} \left( 1 -
\frac{1}{\lambda_{mi}-\lambda_{mj}} \right).
\end{equation}
Hence, $\Gamma$ can be identified with $G-$invariant polynomials  in $\Lambda$.

\begin{remark}\label{maximal ideals differ by permutations}
In what follows, we will identify the set $\Specm \Lambda$ of maximal ideals of $\Lambda$ with the set  $\mathbb{C}^{\frac{n(n+1)}{2}}$. If   $\pi: \Sp \Lambda \rightarrow
\Sp \Ga$, for every maximal ideal $m\in Specm \Gamma$, $\pi^{-1}(m)$ is a single $G$-orbit and in particular is finite.
\end{remark}

\begin{remark}
It was shown in \cite{Zh}  that the action of the generators $c_{rs}$ of $\Gamma$ on any basis tableau of a simple finite dimensional module is given by
\begin{equation}\label{action of Gamma in finite dimensional modules}
c_{rs}(T(L))=\gamma_{rs}(L)T(L),
\end{equation}
where the polynomials $\gamma_{rs}(L)$ are defined in (\ref{def-gamma}). In particular, any simple finite dimensional module is a Gelfand-Tsetlin module and
the Gelfand-Tsetlin subalgebra is diagonal in
 the tableaux
basis described in Theorem \ref{Gelfand-Tsetlin theorem}.
\end{remark}

Our goal is to construct explicitly new families of simple Gelfand-Tsetlin modules of $\gl_n$.
 Our approach  involves constructing of certain admissible sets of relations analogous to  relations that define standard tableaux. For each  such set of relations we   define an infinite
 family of non isomorphic Gelfand-Tsetlin  modules.

\section{Admissible relations}
\subsection{Sets of relations and realizations}
Set $\mathfrak{V}:=\{(i,j)\ |\ 1\leq j\leq i\leq n\}$. In this section we will consider certain binary relations on $\mathfrak{V}$ which are connected with the realization of simple finite dimensional modules described in Theorem \ref{Gelfand-Tsetlin theorem}. Set
\begin{align}
\mathcal{R}^+ &:=\{((i,j);(i-1,t))\ |\ 1\leq j\leq i,\ 2\leq i\leq n,\ 1\leq t\leq i-1\}\\
\mathcal{R}^- &:=\{((i,j);(i+1,s))\ |\ 1\leq j\leq i\leq n-1,\ 1\leq s\leq i+1\}\\
\mathcal{R}^{0}&:=\{((n,i);(n,j))\ |\ 1\leq i\neq j\leq n\}
\end{align}

and let $\mathcal{R}:=\mathcal{R}^{-}\cup\mathcal{R}^{0}\cup\mathcal{R}^{+}\subset \mathfrak{V}\times\mathfrak{V}$. From now any $\mathcal{C}\subseteq \mathcal{R}$ will be called a {\it set of relations}.

Associated with any $\mathcal{C}\subseteq \mathcal{R}$ we can construct a directed graph $G(\mathcal{C})$ with set of vertices  $\mathfrak{V}$ and an arrow going from $(i,j)$ to $(r,s)$ if and only if $((i,j);(r,s))\in\mathcal{C}$. For convenience we will picture the vertex set as disposed in a triangular arrangement with $n$ rows and $k$-th row given by $\{(k,1), \ldots, (k,k)\}$.

\begin{definition}
Let $\mathcal{C}$ be any set of relations.
\begin{itemize}
\item[(i)] We denote $\mathfrak{V}(\mathcal{C})\subseteq \mathfrak{V}$ the set of all vertices in $G(\mathcal{C})$ which are   starting or  ending vertices of an arrow.
\item[(ii)] $\mathcal{C}$ is called indecomposable if $G(\mathcal{C})$ is a connected graph.
\item[(iii)] $\mathcal{C}$ is called a loop if $G(\mathcal{C})$ is an oriented cycle.
\item[(iv)] Given $(i,j),\ (r,s)\in\mathfrak{V}$ we will write $(i,j)\succeq_{\mathcal{C}}  (r,s)$ if there exists a path in $G(\mathcal{C})$ starting in $(i,j)$ and finishing in $(r,s)$.
\end{itemize}
\end{definition}

Note that any $\mathcal{C}\subseteq \mathcal{R}$ can be written in the form $\mathcal{C}=\mathcal{C}^{-}\cup\mathcal{C}^{0}\cup\mathcal{C}^{+}$, where $\mathcal{C}^{-}:=\mathcal{R}^{-}\cap\mathcal{C}$, $\mathcal{C}^{0}:=\mathcal{R}^{0}\cap\mathcal{C}$ and $\mathcal{C}^{+}:=\mathcal{R}^{+}\cap\mathcal{C}$. Now we are ready to see how the sets of relations we are considering are generalizations of the standard sets of relations used in Theorem \ref{Gelfand-Tsetlin theorem}.

\begin{definition}
Let $\mathcal{C}$ be any set of relations and $T(L)$ any Gelfand-Tsetlin tableau.

\begin{itemize}
\item[(i)] We will say that $T(L)$ satisfies $\mathcal{C}$ if:
\begin{itemize}
\item[$\bullet$] $l_{ij}-l_{rs}\in \mathbb{Z}_{\geq 0}$ for any $((i,j); (r,s))\in \mathcal{C}^+\cup\mathcal{C}^0$.
\item[$\bullet$] $l_{ij}-l_{rs}\in \mathbb{Z}_{> 0}$ for any $((i,j); (r,s))\in \mathcal{C}^-$.
\end{itemize}
\item[(ii)] We say that $T(L)$ is a $\mathcal{C}$-realization if $T(L)$ satisfies $\mathcal{C}$ and for any $1\leq k\leq n-1$ we have, $l_{ki}-l_{kj}\in \mathbb{Z} $ if and only if $(k,i)$ and $(k,j)$ in the same connected component of $G(\mathcal{C})$.
\item[(iii)] Suppose that $T(L)$ satisfies $\mathcal{C}$. By ${\mathcal B}_{\mathcal{C}}(T(L))$ we denote the set of all tableaux of the form $T(L+z)$, $z\in {\mathbb Z}_0^{\frac{n(n+1)}{2}}$ satisfying $\mathcal{C}$. By $V_{\mathcal{C}}(T(L))$ we denote the complex vector space spanned by ${\mathcal B}_{\mathcal{C}}(T(L))$.
\end{itemize}
\end{definition}

If $(n, i)\succeq_{\mathcal{C}_0}(n, i)$ for $1\leq i\neq j\leq n$, we call the corresponding loop trivial,  otherwise the loop is called nontrivial. If $\mathcal{C}$ contains a trivial loop then there exists $\mathcal{C}'\subseteq \mathcal{C}$ such that
any $\mathcal{C}$-realization $T(L)$ is a $\mathcal{C}'$-realization  and
${\mathcal B}_{\mathcal{C}'}(T(L))={\mathcal B}_{\mathcal{C}}(T(L))$. Therefore for convenience throughout this paper we only consider sets of relations $\mathcal{C}$ which do not contain trivial loops.

\begin{example}\label{standard and generic relations}
A tableau $T(L)$ is standard if and only if $T(L)$ satisfies the set of relations $\mathcal{S}=\mathcal{S}^+\cup\mathcal{S}^-$ where
\begin{align*}
\mathcal{S}^+ &:=\{(i+1,j);(i,j))\ |\ 1\leq j\leq i\leq n-1\}\\\mathcal{S}^- &:=\{((i,j);(i+1,j+1))\ |\ 1\leq j\leq i\leq n-1\}.
\end{align*}
Note that any Gelfand-Tsetlin tableau satisfies $\emptyset$ and $T(L)$ is a $\emptyset$-realization if and only if $l_{ki}-l_{kj}\notin\mathbb{Z}$ for any $1\leq k\leq n-1$.
\end{example}


Our goal is to determine for which sets of relations $\mathcal{C}$ and tableaux $T(L)$ one can define a $\gl_n$-module structure on $V_{\mathcal{C}}(T(L))$ with the action of $\gl_n$ given by the Gelfand-Tsetlin formulas.

\begin{definition}
Let $\mathcal{C}$ be a subset of $\mathcal{R} $. We call  $\mathcal{C}$  \it{admissible} if for any
$\mathcal{C}$-realization $T(L)$, the
 Gelfand-Tsetlin formulas (\ref{Gelfand-Tsetlin formulas}) define on $V_{\mathcal{C}}(T(L))$ a structure of $\gl_n$-module.
 
 \end{definition}

It follows from Theorem \ref{Gelfand-Tsetlin theorem} that $\mathcal{S}$ defined in Example \ref{standard and generic relations} is admissible. Moreover, finite dimensional modules can be described by considering  subsets of $\mathcal{R}^{-}\cup\mathcal{R}^{+}$. The following example
 justifies why we need to consider relations involving $\mathcal{R}^{0}$.

\begin{example} Let $\mathcal{C}$ be the set of relations with associated graph

\begin{center}\begin{tabular}{c c}
\xymatrixrowsep{0.5cm}
\xymatrixcolsep{0.1cm}\xymatrix @C=0.2em {
 \scriptstyle{(3,1)}& &\scriptstyle{(3,2)}\ar[rd]   & &    \\
  & \scriptstyle{(2,1)}  \ar[lu]\ar[rd]   & &\scriptstyle{(2,2)}   \\
    &     &\scriptstyle{(1,1)}\ar[ru]  &   \\
}
\end{tabular}
\end{center}
If $T(L)$ is a $\mathcal{C}$-realization, then by Theorem \ref{sufficiency of admissible} and by Propositions \ref{not admissible}  one has:
\begin{itemize}
\item[(i)]  $V_{\mathcal{C}}(T(L))$ is a module if  $l_{31}\geq l_{32}-1$.
\item[(ii)]   $V_{\mathcal{C}}(T(L))$ is not a module if  $l_{31}< l_{32}-1$.
\end{itemize}

{
If $l_{31}\geq l_{32} $, all tableaux in $V_{\mathcal{C}}(T(L))$ satisfy the relations associated with the following graph
\begin{center}
\begin{tabular}{c c}
\xymatrixrowsep{0.5cm}
\xymatrixcolsep{0.1cm}\xymatrix @C=0.2em {
 \scriptstyle{(3,1)} \ar[rr]& &\scriptstyle{(3,2)}\ar[rd]   &  &  \\
  & \scriptstyle{(2,1)}  \ar[lu]\ar[rd]   & &\scriptstyle{(2,2)}   \\
    &     &\scriptstyle{(1,1)}\ar[ru]  &   \\
}
\end{tabular}
\end{center}

Let $l_{31}=l_{32}-1 $ and $T(R)$  any tableau in  $V_{\mathcal{C}}(T(L))$. Then all   tableaux in the subspace $U(T(R))$ satisfy one of the following equivalent relations whose graphs are:

\begin{center}
\begin{tabular}{c c c}
\xymatrixrowsep{0.5cm}
\xymatrixcolsep{0.1cm}\xymatrix @C=0.2em {
 \scriptstyle{(3,1)} \ar[rrrd]& &   &  & \\
  & \scriptstyle{(2,1)}  \ar[lu]\ar[rd]   & &\scriptstyle{(2,2)}   \\
    &     &\scriptstyle{(1,1)}\ar[ru]  &   \\
}
\ \ \ \ \ \ \ \ \ \
\xymatrixrowsep{0.5cm}
\xymatrixcolsep{0.1cm}\xymatrix @C=0.2em {
  & &\scriptstyle{(3,2)}\ar[rd]   & & \\
  & \scriptstyle{(2,1)}  \ar[ru]\ar[rd]   & &\scriptstyle{(2,2)}   \\
    &     &\scriptstyle{(1,1)}\ar[ru]  &   \\
}
\end{tabular}
\end{center}
}
\end{example}

\begin{definition}
Let $\mathcal{C}$ be any set of relations. We call $(k,i)\in \mathfrak{V}(\mathcal{C})$
maximal if there exist no $(s,t)\in \mathfrak{V}(\mathcal{C})$ such that $(s,t)\succeq_{\mathcal{C}}(k,i)$. The minimal pair can be defined similarly.
Let $(i,j)\in \mathfrak{V}(\mathcal{C})$ be a maximal or a minimal pair. Denote by $\mathcal{C}_{ij}$ the set of relations obtained from $\mathcal{C}$ by removing all relations that involve $(i,j)$. 
\end{definition}

\begin{lemma}\label{lemma maximal}
For any set of relations $\mathcal{C}$ which does not contain loops, there exist maximal and minimal elements in $\mathfrak{V}(\mathcal{C})$.
\end{lemma}

\begin{proof}
Choose any $(i,j)\in \mathfrak{V}(\mathcal{C})$. If it is not an end point of an arrow then it is maximal.
If it is an ending vertex of an arrow then there exists $(r, s)\in \mathfrak{V}(\mathcal{C})$ such that
$((r, s),(i, j))\in \mathcal{C} $. Since $\mathcal{C}$  does not contain loops and $\mathfrak{V}(\mathcal{C})$ has finitely many vertices we  obtain a maximal element in $\mathfrak{V}(\mathcal{C})$ by repeating the procedure above. Existence of a minimum element can be proved similarly.
\end{proof}

\begin{lemma}\label{lemma loop}
Let $\mathcal{C}$ be a indecomposable subset of $\mathcal{R}$. There exists a tableau satisfying $\mathcal{C}$ if and only if $\mathcal{C}$ does not contain nontrivial loops.
\end{lemma}

\begin{proof}
Suppose $\mathcal{C}$ does not contain nontrivial loops. Then it contain a maximal element $(i_1,j_1)$. Let $ (r_1,s_1)\in \mathfrak{V}(\mathcal{C})$ be such that $(r_1,s_1)\neq(i_1,j_1)$ and $(r_1,s_1)\notin \mathfrak{V}(\mathcal{C}_{i_1j_1})$. If $(i_{t+1},j_{t+1})=(r_t,s_t)$, repeating this procedure we get a sequence
$(i_1, j_1),(i_2, j_2),\ldots,(i_m,j_m)$, where $m=\sharp \mathfrak{V}(\mathcal{C})$.
Let $T(L)$ be the tableau such that $l_{i_1, j_1}>l_{i_2, j_2}>\cdots>l_{i_m, j_m}$. Then it satisfies $\mathcal{C}$.

Suppose now that $\mathcal{C}$ contains a nontrivial loop and there is a tableau satisfying $\mathcal{C}$. Then
$l_{i_1, j_1}\geq l_{i_2, j_2}\geq \cdots \geq l_{i_m,j_m}$ with $(i_1,j_1)=(i_m,j_m)$ and there is at least one of the $\geq$ is $>$. Thus
$l_{i_1, j_1}> l_{i_1, j_1}$ which is a contradiction.
\end{proof}

So from now on, we will only consider $\mathcal{C}$ which do not contain loops.

Now we describe an effective method of constructing of admissible subsets of relations which we call {\it relations removal method} ({\it RR-method} for short).

We say that $\widetilde{\mathcal{C}}\subsetneq\mathcal{C}$ is obtained from $\mathcal{C}$ by the RR-method if it is obtained by a sequence removing of relations of the form $\mathcal{C'}\to \mathcal{C'}_{ij}$ for different indexes.

Let $T(L)$ be any $\mathcal{C}$-realization.
Then the Gelfand-Tsetlin formulas \eqref{Gelfand-Tsetlin formulas} define an action of $\mathfrak{Gl}_n$ on $V_{\mathcal{C}}(T(L))$.

\begin{theorem}\label{tech}
Let $ \mathcal{C}_1\subseteq \mathcal{R}$ be admissible and suppose that $\mathcal{C}_2$ is obtained from $ \mathcal{C}_1$ by the RR-method.  Then $\mathcal{C}_2$ is admissible.
\end{theorem}

\begin{proof}

Clearly, it suffices to prove the theorem in the case when $\mathcal{C}_2=(\mathcal{C}_1)_{ij}$ for some maximal or minimal $(i,j)\in \mathfrak{V}(\mathcal{C}_1)$.
To show that $\mathcal{C}_2$ is admissible it is sufficient to prove that for any $\mathcal{C}_2$-realization
$T(L)$ and any  generator $g$ of $\mathfrak{k}_n$ we have $gT(L +z)=0$, where $z\in {\mathbb Z}_0^{\frac{n(n+1)}{2}}$ is such that
$T(L + z )\in {\mathcal B}_{\mathcal{C}_{2}}(T(L))$.

Let $T(R)$ be a $\mathcal{C}_1$-realization such that $r_{st}=(l+z)_{st}$ if $(s, t)\neq (i, j)$.
Let $m$ be positive (respectively, negative) integer if $(i,j)$ is maximal (respectively minimal) with $|m|>3$.
Then $T(R+ m\boldsymbol{\delta}^{ij})$  satisfies $ \mathcal{C}_1$ and the
Gelfand-Tsetlin formulas \eqref{Gelfand-Tsetlin formulas} define a $\gl_n$-module structure on
$V_{\mathcal{C}_{1}}(T(R +m\boldsymbol{\delta}^{ij}))$.
 We have
\begin{align*}
gT(R+ m\boldsymbol{\delta}^{ij})&=\sum\limits_{w\in A}g_{w}(R+  m\boldsymbol{\delta}^{ij})T(R+ m\boldsymbol{\delta}^{ij} + w),
\end{align*}
where $A$ is the set of $w\in {\mathbb Z}_0^{\frac{n(n+1)}{2}}$ such that $T(R + m\boldsymbol{\delta}^{ij} +w )\in {\mathcal B}_{\mathcal{C}_{1}}(T(R)$.
Expanding
$gT(R+ m\boldsymbol{\delta}^{ij})$ and $gT(L+z)$  step by step
we have that
in every step the tableaux $T(L  +z+w )$ appearing in the expansion of $gT(L+z)$ are in ${\mathcal B}_{\mathcal C_2}(T(L))$
if and only if any $T(R+ m\boldsymbol{\delta}^{ij}+w)$ appearing in the expansion of $gT(R+ m\boldsymbol{\delta}^{ij})$  is in ${\mathcal B}_{\mathcal C_2}(T(L))$.
Thus,
\begin{align*}
gT(L+z )&=\sum_{w \in A }g_{w}(L+z )T(L+z +w).
\end{align*}
Since $V_{\mathcal{C}_1}(T(R+m\boldsymbol{\delta}^{ij}))$ is a module for infinitely many values of $m$ and
$g_{w}(R+  m\boldsymbol{\delta}^{ij})$  are rational functions in  $m$,
we conclude that $g_{w}(L+z)=0$ for all $w$ and  $ \mathcal{C}_2 $  is admissible.

\end{proof}

 A tableau $T(L)$ is called generic if $l_{ki}-l_{kj}\notin\mathbb{Z}$ for any $1\leq i\neq j\leq k\leq n-1$.
 The following corollary is a consequence of Theorem \ref{Gelfand-Tsetlin theorem} and Theorem \ref{tech}.

\begin{corollary}\label{cor generic}
Let $T(L)$ be a generic  Gelfand-Tsetlin tableau of height $n$. Denote by ${\mathcal B}(T(L))$  the set of all Gelfand-Tsetlin tableaux $T(R)$ satisfying $r_{nj}=l_{nj}$, $r_{ij}-l_{ij}\in\mathbb{Z}$ for  $1\leq j\leq i \leq n-1$. Then $V(T(L))=\Span {\mathcal B}(T(L))$ has a structure of a $\mathfrak{gl}_{n}$-module with the action of $\gl_n$ given by the Gelfand-Tsetlin formulas.
\end{corollary}

\begin{proof}
Applying the RR-method to $\mathcal{S}$ (see Example \ref{standard and generic relations}), after finitely many steps we can remove all the relations in $\mathcal{S}$. It follows from Theorems \ref{Gelfand-Tsetlin theorem} and \ref{tech} that $\emptyset$ is admissible. Since any generic tableau is an $\emptyset$-realization, the statement is proved.
\end{proof}

Since $\mathcal{S} $ is admissible we immediately obtain a large family of admissible sets of relations by Theorem \ref{tech}.
The following are  the graphs associated with some admissible sets of relations:

\begin{center}
\begin{tabular}{ c c c c c}
\xymatrixrowsep{0.5cm}
\xymatrixcolsep{0.1cm} \xymatrix @C=0.2em{
     &\scriptstyle{(k+1,s)}\ar[rd]   &   & \\
   \scriptstyle{(k,i)}\ar[rd] \ar[ru]  &    &\scriptstyle{(k,j)};   &  \\
      &\scriptstyle{(k-1,t)}\ar[ru]   &   & }
&\ &\ \xymatrixrowsep{0.5cm}
\xymatrixcolsep{0.1cm}\xymatrix @C=0.2em {
\scriptstyle{(k+1,s)}\ar[rd]  &   &       & \\
   &\scriptstyle{(k,i)}; &    &    \\
      &   &   & }& &
\xymatrixrowsep{0.5cm}
\xymatrixcolsep{0.1cm}\xymatrix @C=0.2em {
     &\scriptstyle{(k+1,s)}    &   & \\
   \scriptstyle{(k,i)}  \ar[ru]  &    &   &  \\
      &   &   & }
\end{tabular}
\end{center}

 Note that any subset of $\mathcal{R}$ is  a union of disconnected  indecomposable sets and this decomposition is unique.
Lemma \ref{lemma loop} gives a necessary condition for $\mathcal{C}$ to be admissible. In the following sections
we will give  necessary and sufficient conditions for admissibility.

\subsection{Noncritical sets of relations}

 We call a tableau $T(L)$ {\it noncritical} if $l_{ki}\neq l_{kj}$ for all $1\leq i< j\leq k\leq n-1$,  and {\it critical} otherwise. If $T(R)$ is critical then the Gelfand-Tsetlin formulas are not defined on $T(R)$ and, hence, $V_{\mathcal{C}}(T(L))$ should not contain critical tableaux.
So as the first step in construction of admissible relations we describe noncritical sets of relations.

\begin{definition}
Let $\mathcal{C}, \mathcal{C}'$ be any subsets of $\mathcal{R}$. We say that  $\mathcal{C}$  {\it implies} $\mathcal{C}'$ if whenever we have $(i,j)\succeq_{\mathcal{C}'} (r,s)$ we also have $(i,j)\succeq_{\mathcal{C}} (r,s)$. We say that $\mathcal{C}$ is equivalent to $\mathcal{C}'$ if $\mathcal{C}$ implies $\mathcal{C}'$ and $\mathcal{C}'$ implies $\mathcal{C}$.
\end{definition}

The following is easy to verify.
\begin{lemma}\label{lemma critical}
 Let $\mathcal{C}$ be a set of relations and $T(R)$ a tableau satisfying $\mathcal{C}$. Let $(s,t)$ be maximal or a minimal with respect to $\mathcal{C}$ and $(s,t)\neq(k,i),(k,j)$.
 There exists a tableau $T(Q)=T(R+z)$ satisfying $\mathcal{C}$ such that $q_{ki}= q_{kj}$
if and only if there exists a tableau $T(Q')=T(R+z')$ satisfying $\mathcal{C}_{st}$ such that $q'_{ki}= q'_{kj}$.
\end{lemma}

\begin{proof}
We give the proof when $(s,t)$ is maximal. Suppose that there exists a tableau $T(Q')=T(R+z')$ satisfying $\mathcal{C}_{st}$ such that $q'_{ki}= q'_{kj}$. Let $T(Q)=T(Q'+m\boldsymbol{\delta}^{ij})$ with $m\gg 0$, then $T(Q)$ satisfies  $\mathcal{C}$ and $q_{ki}= q_{kj}$.
The converse is obvious.
\end{proof}

\begin{proposition}\label{corollary bad set}
Let $\mathcal{C}$ be any subset of $\mathcal{R}$. If there is a critical tableau $T(L)$ satisfying $\mathcal{C}$
such that
$l_{ki}= l_{kj}$, for some $(k,i),(k,j)\in \mathfrak{V}(\mathcal{C})$, $1\leq i\neq j \leq k\leq n-1 $,
then for any $T(R)$ satisfying $\mathcal{C}$ there exists $z\in \mathbb Z_0^{\frac{n(n+1)}{2}}$ such that $T(R+z)=(q_{rs})$  with $q_{ki}= q_{kj}$.
\end{proposition}

\begin{proof}
If $(s,t)$ is maximal or minimal and $(s,t)\neq(k,i),(k,j)$,
we remove  $(s,t)$.
Repeating above removal method until there is not maximal or minimal element $(s,t)\neq(k,i),(k,j)$, we obtain
$\mathcal{C}'$. We claim that $\mathcal{C}'=\emptyset$, then $T(R+(r_{ki}-r_{kj})\boldsymbol{\delta}^{kj})$ satisfies $\emptyset$.
The statement follows from Lemma \ref{lemma critical}.

Assume  $\mathcal{C}'\neq\emptyset$. Since no subset of $\mathcal{C}$ contain loops then
there exist maximal and minimal elements in $\mathfrak{V}(\mathcal{C}')$. There are two possibilities:

(1) $(k,i)$ is maximal and $(k,j)$ is minimal,

(2) $(k,i)$ is  minimal and $(k,j)$ is maximal.

Both are impossible. For case (1) we have $(k,i)\succeq_{\mathcal{C}}(k,j)$ which implies $l_{ki}>l_{kj}$, since $T(L)$ satisfies $\mathcal{C}$. This contradicts with $l_{ki}=l_{kj}$. The case (2) can be proved to be impossible similarly.
\end{proof}

Proposition \ref{corollary bad set} suggests the following definition.

\begin{definition}
Let $\mathcal{C}$ be any subset of $\mathcal{R}$. We call $\mathcal{C}$ noncritical if for any $\mathcal{C}$-realization $T(L)$, one has $l_{ki}\neq l_{kj}$, $1\leq k\leq n-1,\ i\neq j$, $(k,i),(k,j)\in \mathfrak{V}(\mathcal{C})$.
\end{definition}

\begin{remark}\item[(i)] For any noncritical set $\mathcal{C}$,  there are infinitely many $\mathcal{C}$-realizations.
\item[(ii)] $\mathcal{S}$ is noncritical. Any standard tableau is a $\mathcal{S}$-realization.
\item[(iii)] $\emptyset$ is noncritical.  Any generic tableau is a $\emptyset$-realization.
\item[(iv)]  $\mathcal{C}=\{((2,1);(1,1)), ((2,2);(1,1))\}$ is critical since there exist tableaux $T(L)$ satisfying $\mathcal{C}$ with $l_{21}=l_{22}$.
\end{remark}


From now on we only consider  sets  of relations $\mathcal{C}$ satisfying the following conditions:
\begin{itemize}
\item[(i)] $\mathcal{C} $ does not contain loops.
\item[(ii)] $\mathcal{C} $ is noncritical.
\item[(iii)] If $(n,i)$ and $(n,j)$, $i\neq j$, are in the same indecomposable subset of $\mathcal{C}$,
 then $(n,i)\succeq_{\mathcal{C}}(n,j)$ or $(n,j)\succeq_{\mathcal{C}}(n,i)$.
\end{itemize}

\subsection{Reduced sets}

Our next step towards constructing the admissible sets of relations is to define reduced sets.

We have

\begin{proposition}\label{order}
Let $\mathcal{C}$ be an indecomposable noncritical set.
For any $(k,i),(k,j)\in \mathfrak{V}(\mathcal{C})$, $i\neq j$, we have that
$(k,i)\succeq_{\mathcal{C}}(k,j)$ or $(k,j)\succeq_{\mathcal{C}}(k,i)$.

\end{proposition}

\begin{proof}
Suppose there exist two tableaux $T(L)$ and $T(R)$ satisfying $\mathcal{C}$ with $l_{ki}>l_{kj}$ and $r_{ki}<r_{kj}$, $1\leq k\leq n-1$.
For any positive integers $s$ and $t$, the tableau $T(Q)=T(sL+tR)$ with entries $q_{ij}=sl_{ij}+tr_{ij}$, satisfies $\mathcal{C}$. In particular, for $s=r_{kj}-r_{ki}$ and $t=l_{ki}-l_{kj}$ one has  $q_{ki}=q_{kj}$ which is a contradiction. When $k=n$, the statement follows from condition (iii).
\end{proof}

Note that
any standard tableau $T(L)$ satisfies $l_{k1}>\cdots >l_{kk}$ for any $k$.

\begin{proposition}\label{choose tableau}
Let $\mathcal{C}$ be any noncritical set and  $m$ be any positive integer.  There exists a tableau $T(L)$ satisfying $\mathcal{C}$ such that $|l_{ij}-l_{st}|\geq m$ for
any two pairs $(i,j), (s,t)\in \mathfrak{V}(\mathcal{C})$.
\end{proposition}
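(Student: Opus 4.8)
The plan is to argue by induction on $\#\mathfrak{V}(\mathcal{C})$, building up a tableau whose entries are spread out by at least $m$ while respecting all the relations in $\mathcal{C}$. By Proposition \ref{order}, for each fixed row index $k$ the relations in $\mathcal{C}$ impose a total order on the pairs $(k,j)\in\mathfrak{V}(\mathcal{C})$, and the relations in $\mathcal{C}\subseteq\mathcal{R}$ only ever compare an entry in row $i$ with an entry in the adjacent row $i-1$ (together with the top-row order coming from $\mathcal{R}^0$). So the combinatorial data in $\mathcal{C}$ is really a partial order $\preceq$ on the finite set $\mathfrak{V}(\mathcal{C})$: extend $\preceq$ to any linear order $(i_1,j_1)\prec(i_2,j_2)\prec\cdots\prec(i_N,j_N)$ of $\mathfrak{V}(\mathcal{C})$, where $N=\#\mathfrak{V}(\mathcal{C})$. (One must check that the relations in $\mathcal{C}$ — each of the form $l_{ij}\geq l_{i-1,j'}$ or $l_{i-1,j'}>l_{ij}$, i.e. differences lying in $\mathbb{Z}_{\geq 0}$ or $\mathbb{Z}_{>0}$ — are compatible with a single partial order; this is exactly the content that $\mathcal{C}$ is noncritical and, being a subset of $\mathcal{R}$, contains no contradictory chains.)

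Next I would assign values: pick any real (in fact rational, or integer) numbers $a_1 < a_2 < \cdots < a_N$ with $a_{t+1}-a_t \geq m$ for all $t$, and set $l_{i_t j_t} := a_t$. For the entries $(i,j)\in\mathfrak{V}$ \emph{not} in $\mathfrak{V}(\mathcal{C})$ — in particular all of the top row beyond what $\mathcal{R}^0$ constrains, and any interior entries untouched by $\mathcal{C}$ — we are free to choose them arbitrarily, so choose them far away from everything already placed and from each other (e.g. at positions $a_N + m, a_N + 2m, \dots$), which trivially keeps the required gap $\geq m$ for all pairs in $\mathfrak{V}(\mathcal{C})$ since those pairs already satisfy it. The resulting $T(L)$ satisfies every relation in $\mathcal{C}$: a relation $(i,j)\geq(i-1,j')$ holds because the linear order was chosen to refine the partial order coming from $\mathcal{C}$, so $a_{t}-a_{t'}$ is a nonnegative integer (indeed $\geq m > 0$) whenever the relation demands it, and similarly strict relations give a strictly positive integer difference; and $|l_{ij}-l_{st}| = |a_t - a_{t'}| \geq m$ by construction.

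The one genuinely delicate point — the step I expect to be the main obstacle — is verifying that the relations in a noncritical $\mathcal{C}$ are consistent with a single total order on $\mathfrak{V}(\mathcal{C})$ taking integer-difference values, i.e. that there is no hidden cyclic chain of the form $(a)\geq(b)\geq\cdots\geq(a)$ forcing an equality that would contradict the requested strict separation, or worse a chain forcing $(a)>(a)$. Here one uses: (i) Proposition \ref{order}, which already rules out opposite strict inequalities between a fixed pair in the same row; (ii) the fact that $\mathcal{C}\subseteq\mathcal{R}$ relations link consecutive rows only, so any chain of $\mathcal{C}$-relations alternates between rows $i$ and $i-1$ and cannot close up into a cycle within the layered structure $\mathfrak{V}$; and (iii) the definition of noncritical, which guarantees no two entries in a single row $k$ (for $k\leq n-1$) are forced equal. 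Once consistency is established, the topological sort producing the linear order exists by the standard fact that any finite partial order embeds in a total order, and the rest is the routine assignment above. I would also remark that the construction in fact produces integer-valued tableaux, and that by adding a large constant to every entry one may assume all entries positive if desired, though neither refinement is needed for the statement.
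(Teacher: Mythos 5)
Your construction is correct and, in fact, more explicit than what the paper offers: the paper gives no argument at all for Proposition \ref{choose tableau}, merely asserting that it "can be easily proved by induction on $\#\mathfrak{V}(\mathcal{C})$", whereas you give a direct, non-inductive construction (linear extension of the order generated by $\mathcal{C}$, then widely spaced integer values). The one point you should tighten is item (ii) of your "delicate point": it is not true that chains of $\mathcal{C}$-relations "cannot close up into a cycle" in the layered structure --- combinatorially they can, e.g. $\{(2,1)\geq(1,1),\ (1,1)>(2,1)\}$ is a perfectly good subset of $\mathcal{R}$ and is a cycle. The correct observation is that every downward edge of $\mathcal{R}$ (row $i$ to row $i-1$) is weak and every upward edge (row $i-1$ to row $i$) is strict, and a cycle must use at least one upward edge (pure downward chains strictly decrease the row index); hence any cycle forces $l>l$ and makes $\mathcal{C}$ unrealizable. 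Since a noncritical set in the paper's sense is assumed to admit a realization (this is part of the definition of a noncritical extended set $\mathcal{C}^*$, and is asserted in the paper's remark that noncritical sets have infinitely many realizations), no such cycle exists, the transitive closure of $\mathcal{C}$ is antisymmetric, and your topological sort is available. With that one-line repair, the rest of your argument --- assigning values $a_1<\cdots<a_N$ with consecutive gaps at least $m$ along the linear extension, and placing the unconstrained entries generically --- is complete: weak relations become differences $\geq m>0$ in $\mathbb{Z}_{\geq 0}$ and strict relations become differences in $\mathbb{Z}_{>0}$, as required.
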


\begin{proof}
 We use induction on $\#\mathfrak{V}(\mathcal{C})$.
The case $\#\mathfrak{V}(\mathcal{C})=2$ is clear. Suppose that $\#\mathfrak{V}(\mathcal{C})>2$ and let
$(i,j)$ be maximal. By induction there exists $T(L')$ satisfying $\mathcal{C}_{ij}$ and the condition of the proposition.
Let $T(L)= T(L'+s\boldsymbol{\delta}^{ij})$. If  we choose $s>>0$ then it satisfies $\mathcal{C}$ and the required condition.
\end{proof}


\begin{definition}\label{def reduced}
Let $\mathcal{C}$ be any noncritical set of relations. We call  $\mathcal{C}$ reduced, if for every $(k,j)\in \mathfrak{V}(\mathcal{C})$ the following conditions
are satisfied:
\begin{itemize}
\item[(i)] There exists at most one $i$ such that $((k,j);(k+1,i))\in \mathcal{C}$.
\item[(ii)] There exists at most one $i$ such that $((k+1,i);(k,j))\in\mathcal{C}$.
\item[(iii)] There exists at most one $i$ such that $((k,j);(k-1,i))\in\mathcal{C}$.
\item[(iv)] There exists at most one $i$ such that $((k-1,i);(k,j))\in \mathcal{C}$.
\item[(v)] No relations in the top row follow from other relations.

\end{itemize}
\end{definition}

We will show now that distinct reduced sets of relations are nonequivalent.

\begin{proposition}\label{set equiv}
Let $\mathcal{C}$ and $\mathcal{C}'$ be reduced sets. Then $\mathcal{C}$ and $\mathcal{C}'$
are equivalent if and only if $\mathcal{C}=\mathcal{C}'$.
\end{proposition}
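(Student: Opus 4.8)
The plan is to prove the nontrivial direction: if $\mathcal{C}_1$ and $\mathcal{C}_2$ are reduced and equivalent, then $\mathcal{C}_1=\mathcal{C}_2$. The reverse implication is immediate. So suppose $\mathcal{C}_1$ and $\mathcal{C}_2$ are reduced and equivalent but, for contradiction, $\mathcal{C}_1\neq\mathcal{C}_2$; without loss of generality there is a relation $\rho\in\mathcal{C}_1\setminus\mathcal{C}_2$. The relation $\rho$ is one of the four types in $\mathcal{R}$, say $\rho$ is $(k,j)\geq(k-1,i)$ (the other three cases being symmetric). Since $\mathcal{C}_1$ and $\mathcal{C}_2$ are equivalent, every tableau satisfying $\mathcal{C}_2$ also satisfies $\mathcal{C}_1$, hence satisfies $\rho$; the goal is to construct a tableau $T(L)$ satisfying $\mathcal{C}_2$ but violating $\rho$, i.e.\ with $l_{kj}<l_{k-1,i}$ or $l_{kj}-l_{k-1,i}\notin\mathbb{Z}_{\geq 0}$, which gives the contradiction.

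The key tool is Proposition~\ref{choose tableau}: for any noncritical set and any $m$ we may choose a tableau all of whose entries indexed by $\mathfrak{V}(\mathcal{C})$ are pairwise at distance $\geq m$. Starting from such a ``spread-out'' tableau $T(L_0)$ satisfying $\mathcal{C}_2$, I would perturb the single entry $l_{kj}$ (or $l_{k-1,i}$) by an integer $z$ large enough to invert the inequality $l_{kj}\geq l_{k-1,i}$, producing $T(L_0+z\,\delta^{kj})$. One must check that the perturbed tableau still satisfies every relation of $\mathcal{C}_2$: this is where the reduced hypothesis on $\mathcal{C}_2$ enters. Because $\mathcal{C}_2$ is reduced, the pair $(k,j)$ participates in at most one relation of each of the four types, so only a bounded number of relations of $\mathcal{C}_2$ mention $(k,j)$; by choosing $m$ large and moving in the appropriate direction, all of these can be kept satisfied. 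More carefully, since $\rho\notin\mathcal{C}_2$, the entry $l_{kj}$ is ``free'' to move downward relative to $l_{k-1,i}$ as far as the finitely many $\mathcal{C}_2$-relations on $(k,j)$ permit — and the spread-out choice guarantees that permitted range is large enough to cross $l_{k-1,i}$, or else one lands in a different indecomposable component and the integrality condition of the $\mathcal{C}^*$-realization already fails, again violating $\rho$.

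I expect the main obstacle to be the bookkeeping in showing that one can always move $l_{kj}$ in the correct direction while preserving $\mathcal{C}_2$: one has to argue that the relations of $\mathcal{C}_2$ that do constrain $(k,j)$ do not conspire to force $l_{kj}\geq l_{k-1,i}$ through a chain of relations not containing $\rho$ itself. Here I would use Proposition~\ref{order}, which says the relations in an indecomposable component determine a total order on the pairs in each row, together with the reduced property bounding the ``degree'' of $(k,j)$. If the relation $\rho$ connects two pairs lying in different indecomposable components of $\mathcal{C}_2$, then by the $\mathcal{C}^*$-realization condition their entries can already have non-integer difference, so $\rho$ is automatically violated by a generic choice; if they lie in the same component, then the order of Proposition~\ref{order} must already place them comparably, and one checks that a reduced set cannot implicitly derive $\rho$ from other relations without containing it — essentially because reducedness forbids the redundant relations through which such a derivation would pass. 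Assembling these cases, one obtains a tableau satisfying $\mathcal{C}_2$ but not $\mathcal{C}_1$, contradicting equivalence, and hence $\mathcal{C}_1=\mathcal{C}_2$.
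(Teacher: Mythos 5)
Your overall strategy --- pick $\rho\in\mathcal{C}_1\setminus\mathcal{C}_2$ and exhibit a tableau satisfying $\mathcal{C}_2$ but violating $\rho$ --- is genuinely different from the paper's (which inducts on $\#\mathfrak{V}(\mathcal{C}_1)$ by stripping off all relations through a maximal entry), but it rests on a claim that is false: that a reduced set cannot imply a relation it does not contain. Take $n\geq 3$ and
\[
\mathcal{C}_2=\{(3,1)\geq(2,1),\ (2,1)>(3,2),\ (3,2)\geq(2,2)\}.
\]
This set is indecomposable, noncritical and reduced (each position carries at most one relation of each of the four types in Definition~\ref{def reduced}), yet every tableau satisfying it has $l_{31}\geq l_{21}>l_{32}\geq l_{22}$ and hence satisfies $(3,1)\geq(2,2)$, which is not in $\mathcal{C}_2$. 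Applied to $\rho=(3,1)\geq(2,2)$, your construction would try to push $l_{31}$ below $l_{22}$ while preserving the single relation of $\mathcal{C}_2$ that mentions $(3,1)$, namely $(3,1)\geq(2,1)$; the chain through $(2,1)$ and $(3,2)$ makes this impossible, and no amount of spreading out via Proposition~\ref{choose tableau} helps. So the step where you assert that the $\mathcal{C}_2$-relations ``do not conspire through a chain,'' justified by reducedness of $\mathcal{C}_2$ alone, is exactly the unproved (and in general false) content.

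What is missing is any use of the hypothesis that $\mathcal{C}_1$ is \emph{also} reduced: if your construction worked as written it would prove the stronger, false statement that no reduced set implies a relation outside itself. The proposition survives only because a reduced $\mathcal{C}_1$ containing $\rho$ and equivalent to $\mathcal{C}_2$ cannot coexist with such a chain (in the example, $\mathcal{C}_1$ would need $(3,1)\geq(2,2)$ together with something forcing $(3,1)\geq(2,1)$, and Definition~\ref{def reduced}(iii) forbids two $\geq$-relations downward out of $(3,1)$); making that interplay precise in all configurations is the real work. The paper sidesteps it by induction: choose a spread-out tableau, let $(k,i)$ carry the maximal entry, note that $\mathcal{C}_1$ and $\mathcal{C}_2$ with the relations through $(k,i)$ removed remain equivalent (hence equal by induction), and then separate the remaining relations by varying $l_{ki}$ alone --- safe precisely because $l_{ki}$ is extremal, so no chain can interfere. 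To repair your direct argument you would need to import an extremality or induction device of this kind.
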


\begin{proof} Suppose that $\mathcal{C}$ and $\mathcal{C}'$ are equivalent. Then $\mathfrak{V}(\mathcal {C})=\mathfrak{V}(\mathcal {C}')$. Let $m=\#\mathfrak{V}(\mathcal {C})=\#\mathfrak{V}(\mathcal {C}')$.
We prove the statement by induction on $m$. The case $m\leq3$  is obvious.
By Lemma \ref{lemma maximal}, 
$\mathcal{C}$  contains a maximal element, say $(i,j )$. Since $\mathcal{C}$ and $\mathcal{C}'$ are equivalent, we have that $(i, j)$ is maximal in $\mathcal{C}'$.
Then $\mathcal{C}_{ij}$  and $\mathcal{C}'_{ij}$
are equivalent. If not, there exists $T(R)$ that satisfies only one of $\mathcal{C}_{ij}$, $\mathcal{C}'_{ij}$. Then $T(R+s\boldsymbol{\delta}^{ij})$ satisfies only one of $\mathcal{C}$, $\mathcal{C}'$ for certain $s$ which contradicts
to the equivalence of  $\mathcal{C}_1$ and $\mathcal{C}_2$.
By induction we have that
$\mathcal{C}_{ij}=\mathcal{C}'_{ij}$.

Now we show that $\mathcal{C}=\mathcal{C}'$.
Assume the contrary.
Let $\mathcal{D}$ (resp. $\mathcal{D}'$) be the set of relations in $\mathcal{C}$ (resp. $\mathcal{C}'$) which involve $(i,j)$. Then $\mathcal{C}=\mathcal{C}_{ij}\cup\mathcal{D}$ and $\mathcal{C}'=\mathcal{C}'_{ij}\cup\mathcal{D}'$. Without loss of generality, we assume that $((i,j);(a_1,b_1))\in \mathcal{D}$
and $((i,j);(a_1,b_1))\notin \mathcal{D}'$.
By equivalence of $\mathcal{C}$ and $\mathcal{C}'$, we have $(i,j)\succeq_{\mathcal{C}'}(a_1,b_1)$. Then there exists $(a_2,b_2)\neq(a_1,b_1)$ such that $((i,j);(a_2,b_2))\in \mathcal{D}$ and
$(a_2,b_2)\succeq_{\mathcal{C}_{ij}}(a_1,b_1)$.
If $((i,j);(a_2,b_2))\in \mathcal{D}$, then
$(i,j)\succeq_{\mathcal{C}}(a_1,b_1)$ follows from other relations which contradicts $\mathcal{C}_1$ being reduced.  Thus $((i,j);(a_2,b_2))\notin \mathcal{D}$.
By above argument we have $((i,j);(a_3,b_3))\in \mathcal{D}$ and $(a_3,b_3)\succeq_{\mathcal{C}_{ij}}(a_2,b_2)$.
We claim that $(a_3,b_3)\neq(a_1,b_1)$. Otherwise there is a loop in $\mathcal{C}_1$.
Repeating above procedure we have $((i,j);(a_4,b_4))\in \mathcal{D}'$, then
$((i,j);(a_5,b_5))\in \mathcal{D}$ with $(a_p,b_p)\neq(a_q,b_q)$, $1\leq p\neq q\leq 5$.
This contradicts with $\mathcal{C}$ being reduced. Therefore $\mathcal{D}=\mathcal{D}'$.
Thus $\mathcal{C}=\mathcal{C}'$.

The converse is obvious.
\end{proof}

We can prove now

\begin{theorem}\label{equiv to reduced set}
Any noncritical set of relations is equivalent to a unique reduced set of relations.
\end{theorem}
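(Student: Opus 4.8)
The plan is to establish existence and uniqueness separately. For uniqueness, Proposition~\ref{set equiv} already does the work: two reduced sets are equivalent if and only if they coincide, so at most one reduced set can be equivalent to a given noncritical $\mathcal{C}$. Thus the entire burden is on \emph{existence}: given an arbitrary noncritical set $\mathcal{C}$, I must produce a reduced set $\mathcal{C}^{\mathrm{red}}$ equivalent to it.

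First I would reduce to the indecomposable case. Since every noncritical set decomposes uniquely into disconnected indecomposable pieces, and since relations in disconnected pieces never interact (no shared entries of $\mathfrak{V}$), it suffices to reduce each indecomposable component separately and take the union; the union of reduced indecomposable sets on disjoint vertex sets is again reduced, and equivalence is preserved componentwise. So assume $\mathcal{C}$ is indecomposable and noncritical. By Proposition~\ref{order}, for each row $k$ the relations of $\mathcal{C}$ impose a total order $(k,j_1)>(k,j_2)>\cdots>(k,j_r)$ on $\mathfrak{V}(\mathcal{C})\cap(\{k\}\times\bZ)$. The idea is now to replace each ``long chain'' of comparisons between adjacent rows by only its consecutive links. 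Concretely, the four conditions (i)--(iv) in Definition~\ref{def reduced} each say: a fixed entry $(k,j)$ is compared (via a relation of the given one of the four types, e.g.\ $(k,j)>(k+1,i)$) to \emph{at most one} entry of the neighbouring row. If $\mathcal{C}$ currently contains, say, $(k,j)>(k+1,i_1)$ and $(k,j)>(k+1,i_2)$ with $(k+1,i_1)>(k+1,i_2)$ in the row-$(k+1)$ order, then the relation $(k,j)>(k+1,i_1)$ is implied by $(k,j)>(k+1,i_2)$ (wait---one must check the direction: the weaker bound on $(k,j)$ from above is the one to the \emph{larger} neighbour), so I drop the redundant one. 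Iterating this pruning over all entries and all four relation-types terminates (the set is finite and strictly shrinks), and the result $\mathcal{C}^{\mathrm{red}}$ is reduced by construction.

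The key step---and the main obstacle---is to verify that this pruning does not lose information, i.e.\ that $\mathcal{C}^{\mathrm{red}}$ still \emph{implies} $\mathcal{C}$. For the transitivity of strict inequalities $(k,j)>(k+1,i_2)$ and $(k+1,i_2)>(k+1,i_1)\Rightarrow(k,j)>(k+1,i_1)$ one uses that ``$>$'' means integer difference in $\bZ_{>0}$ and that the row-$(k+1)$ order is genuinely realized (Proposition~\ref{order} guarantees a consistent order, and Remark/noncriticality guarantees strict separation so the order is by $<$ on actual values, not merely up to equality). The delicate point is keeping track of the \emph{four} relation types, which mix $\geq$ and $>$ across rows $k-1,k,k+1$: I must confirm that each redundant relation removed is genuinely a consequence---via a single transitivity step through the canonical row order---of a relation that is kept, and that no relation of a \emph{different} type is needed to derive it. I expect this to be a finite case analysis on the type of the removed relation and its position in the row order, using only Proposition~\ref{order} (canonical order per row) and the elementary fact that composing $\geq$ with $>$ (in either order) over integer differences yields $>$. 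Once implication in both directions is checked, $\mathcal{C}^{\mathrm{red}}$ is equivalent to $\mathcal{C}$ and reduced, and uniqueness follows from Proposition~\ref{set equiv}.
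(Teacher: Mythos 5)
Your overall architecture coincides with the paper's: uniqueness is delegated to Proposition \ref{set equiv}, existence is reduced to the indecomposable case, and the reduced set is produced by iteratively discarding, for each entry $(k,j)$ and each of the four relation types of Definition \ref{def reduced}, all but one of the competing relations. The gap lies in the step you yourself single out as the main obstacle: verifying that the pruned set still \emph{implies} each discarded relation. You propose to derive the discarded relation by a single transitivity step through the ``canonical row order'' of Proposition \ref{order}. But Proposition \ref{order} only asserts that all tableaux satisfying $\mathcal{C}$ induce the same order on each row; it says nothing about tableaux satisfying the pruned set $\mathcal{C}^{\mathrm{red}}$, and the family of such tableaux is a priori strictly larger. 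Using the row order to show that a $\mathcal{C}^{\mathrm{red}}$-tableau satisfies the discarded relation is therefore circular: the middle link of your chain, e.g.\ $r_{k+1,i_1}>r_{k+1,i_2}$, is precisely what may fail for a tableau satisfying $\mathcal{C}^{\mathrm{red}}$ but not $\mathcal{C}$. (Note also that for rows below the top the order within a row is never itself a relation of $\mathcal{R}$, so it cannot simply be ``kept'' during pruning.)

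The paper closes this gap with the same convex-combination trick that proves Proposition \ref{order}. Say $(k+1,i)\geq(k,j)$ and $(k+1,i')\geq(k,j)$ both lie in $\mathcal{C}$ with $(k+1,i)>(k+1,i')$ in the $\mathcal{C}$-order; one removes the relation to the larger entry $(k+1,i)$. If some $T(R)$ satisfying $\mathcal{C}\setminus\{(k+1,i)\geq(k,j)\}$ had $r_{k+1,i}<r_{k,j}$, then $r_{k+1,i}<r_{k+1,i'}$, i.e.\ $T(R)$ reverses the row order. Taking a genuine $\mathcal{C}$-realization $T(L)$ (which has $l_{k+1,i}>l_{k+1,i'}$) and the combination $T(sL+tR)$ with suitable positive integers $s,t$, one obtains a tableau with two equal entries in row $k+1$ that still satisfies $\mathcal{C}$, contradicting noncriticality of $\mathcal{C}$. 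Replacing your transitivity step by this argument in each of the four cases of Definition \ref{def reduced} repairs the proof; the remainder of your plan, including the appeal to Proposition \ref{set equiv} for uniqueness, then goes through.
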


\begin{proof}
It is sufficient to prove that any indecomposable set is equivalent to a reduced set. Let $\mathcal{C}$ be indecomposable and $((k+1,i); (k,j)),\ ((k+1,i'); (k,j))\in\mathcal{C}$. By Proposition \ref{order} we have
$(k+1,i)\succeq_{\mathcal{C}}(k+1,i')$ or $(k+1,i')\succeq_{\mathcal{C}}(k+1,i)$.
Suppose first $(k+1,i)\succeq_{\mathcal{C}}(k+1,i')$. If we show that any tableau $T(R)$ satisfying $\mathcal{C}\setminus \{((k+1,i);(k,j))\}$ satisfies also $r_{k+1,i}\geq r_{k,j}$, then $\mathcal{C}\setminus \{((k+1,i); (k,j))\}$ implies $\mathcal{C}$.
Assume that we have a tableau $T(R)$ satisfying $\mathcal{C}\setminus \{((k+1,i); (k,j))\}$ with
$r_{k+1,i}< r_{k,j}$. Then $r_{k+1,i}<r_{k+1,i'}$. Take a tableau $T(L)$  satisfying $\mathcal{C}$. Hence its entries satisfy $l_{k,j}\leq l_{k+1,i'}<l_{k+1,i}$.
For any positive integers $s$ and $t$, $T(Q)=T(sL+tR)$ satisfies $\mathcal{C}\setminus \{((k+1,i); (k,j))\}$. In particular, if $s=r_{k+1,i'}-r_{k+1,i}$ and $t=l_{k+1,i}-l_{k+1,i'}$  then $q_{k+1,i}=q_{k+1,i'}$. Since $T(Q) $ satisfies the noncritical set $\mathcal{C}$ we obtain a contradiction.
 Thus we have $r_{k+1,i}\geq r_{k,j}$.
 If $(k+1,i')\succeq_{\mathcal{C}}(k+1,i)$, then using the same arguments one has that $\mathcal{C}\setminus \{ ((k+1,i'); (k,j))\}$ implies $\mathcal{C}$.

All other cases can be proved similarly. Thus any noncritical set is equivalent to a reduced one. The uniqueness follows from
 Proposition \ref{set equiv}.
\end{proof}

Recall that $G$ denotes the group $S_{n}\times\cdots\times S_{1}$.
\begin{definition}
 For any $\sigma=(\sigma[n],\sigma[n-1],\ldots,\sigma[2],\sigma[1])\in G$ and $\mathcal{C}\subseteq\mathcal{R}$ denote by $\sigma(\mathcal{C})$
  the set of relations:
$$\{((i,\sigma[i](j));(r,\sigma[r](s)))\ |\ ((i,j);(r,s))\in\mathcal{C}\}.$$
\end{definition}

If $\mathcal{C}$ is any noncritical reduced subset of $\mathcal{R} $ and $V_{\mathcal{C}}(T(L))$ is a $\gl_n$-module then $V_{\mathcal{C}}(T(L)) \backsimeq V_{\mathcal{\sigma C}}(T(\sigma L))$. So it is sufficient to consider the noncritical reduced sets  that satisfy the following condition: $(k,i)\succeq_{\mathcal{C}}(k,j)$, only if $i <j$.

\subsection{Cross elimination.}

Here we give two examples of non admissible sets.

\begin{example}\label{not admissible}
Set $k\leq n-1$. The sets of relations corresponding with the following graphs are not admissible.
\begin{center}
\begin{tabular}{c c}
\xymatrixrowsep{0.5cm}
\xymatrixcolsep{0.1cm}\xymatrix @C=0.2em {
 &\scriptstyle{(k+1,j)}\ar[rd]   &   \\
 \scriptstyle{(k,j-1)}  \ar[ru]  &    &\scriptstyle{(k,j)}   \\
}

&\qquad \ \ \xymatrixrowsep{0.5cm}
\xymatrixcolsep{0.1cm}\xymatrix@C=0.2em{
 \scriptstyle{(k,j)}\ar[rd]   &    &\scriptstyle{(k,j+1)}   \\
 &\scriptstyle{(k-1,j)}\ar[ru]   &   \\
}\\
(i)  &\ \ \  (ii)
\end{tabular}
\end{center}
\end{example}

\begin{proof}
Suppose $\mathcal{C}_1$ is the set of relations of the diagram (i). Let $T(L)$  be any tableau
satisfying $\mathcal{C}_1$ and $T(R)$ be a tableau in $V_{\mathcal{C}_1}(T(L))$ such that $r_{k,j-1}=r_{k+1,j}-1$, $r_{k,j}=r_{k+1,j}$. By direct computation one has that
$e_kf_kT(R)-f_ke_kT(R)\neq (H_k-H_{k+1})T(R)$. Thus $V_{\mathcal{C}_1}(T(L))$ is not a $\gl_n$-module. Suppose $\mathcal{C}_{2}$ is the set of relations of the diagram (ii). Similarly we can prove that
$V_{\mathcal{C}_2}(T(L))$ is not a module for any $T(L)$ satisfying $\mathcal{C}_2$.
\end{proof}

\begin{definition}
Let $\mathcal{C}$ be an indecomposable noncritical subset of $\mathcal{R} $. A subset of $\mathcal{C}$ of the form { $\{((k,i); (k+1,t)),\ ((k+1,s);(k,j))\}$   }with $i<j$ and $s<t$ will be called a \it{cross}.
The associated graph is as follows

\begin{center}
\begin{tabular}{c}
\xymatrixrowsep{0.5cm}
\xymatrixcolsep{0.1cm}\xymatrix @C=0.2em {
\scriptstyle{(k+1,s)}\ar[rrrd]  &   &\scriptstyle{(k+1,t)}    &   \\
   &\scriptstyle{(k,i)}\ar[ru]  &    &\scriptstyle{(k,j)}   }
\end{tabular}
\end{center}
\end{definition}

\begin{example}\label{ex cross}
Let $\mathcal{C}$ and $\mathcal{C}_2$ be a set of relations defined by the following graphs
\begin{center}
\begin{tabular}{ c c c c }
\xymatrixrowsep{0.5cm}
\xymatrixcolsep{0.1cm}\xymatrix @C=0.01em {
& &\scriptstyle{(4,2)}\ar[rd]   & & &   & \\
  & \scriptstyle{(3,1)}  \ar[ru]\ar[rrrd]   & &\scriptstyle{(3,2)}& &  \\
\scriptstyle{G(\mathcal{C})=}    &     &\scriptstyle{(2,1)}\ar[ru] \ar[rd] & &\scriptstyle{(2,2)}  & \\
  &    & &\scriptstyle{(1,1)} \ar[ru] & &\\
}
&\
\xymatrixrowsep{0.5cm}
\xymatrixcolsep{0.1cm}\xymatrix @C=0.1em {
& &   & & &   & \\
  &       & && & \\
\scriptstyle{G(\mathcal{C}_2)=}    &     &\scriptstyle{(2,1)}  \ar[rd] & &\scriptstyle{(2,2)}  & \\
  &    & &\scriptstyle{(1,1)} \ar[ru] & &\\
}

\end{tabular}
\end{center}

Note that $(3,1)$ is maximal in $G(\mathcal{C})$, we get $\mathcal{C}_1$ after removing $(3,1)$ from $\mathcal{C}$. Then we can use the RR-method to remove the minimal element $(3,2)$ from $G(\mathcal{C}_1)$. This  gives $G(\mathcal{C}_2)$ which is not admissible by Example \ref{not admissible}. Thus $\mathcal{C}$  is not admissible by Theorem \ref{tech}.

\end{example}

Using the idea of Example \ref{ex cross} we can prove the following proposition.

\begin{proposition}\label{prop-cros}
Let $\mathcal{C}$ be an indecomposable noncritical subset of $\mathcal{R} $.
If $\mathcal{C}$ contains crosses then it is not admissible.
\end{proposition}

\begin{proof}
Suppose  $\mathcal{C}$ contains a cross { $\{((k,i); (k+1,t)),\ ((k+1,s);(k,j))\}$   }.
If $(s,t)$ is maximal or minimal and $(s,t)\notin\{(k,i),(k,j),(k+1,s),(k+1,t)\}$,
we remove  $(s,t)$. Repeating the above removal method until there is no such maximal or minimal pair different from $(k,i),(k,j),(k+1,s),(k+1,t)$, we obtain
$\mathcal{C}'$.
By Theorem \ref{tech} it is sufficient to prove that $\mathcal{C}'$ is not admissible.
The set
$\mathcal{C}'$ is not empty since  $\{((k,i);(k+1,t)),\ ((k+1,s);(k,j))\}$
is contained in $\mathcal{C}'$. We claim that $(k,i)$ and $(k+1,s)$ are maximal elements,  and
$(k,j),(k+1,t)$ are  minimal elements in $\mathfrak{V}(\mathcal{C}')$.
Suppose $(k+1,s)$ is not maximal. As $\mathcal{C}$ does not contain loops, then $(k,i)\succeq_{\mathcal{C}'} (k+1,s)$.
Since $\mathcal{C}'$ is a subset of $\mathcal{C}$, we have  $(k,i)\succeq_{\mathcal{C}} (k+1,s)$.
It implies $(k,i)\succeq_{\mathcal{C}}  (k+1,t)$ together with $(k+1,s)\succeq_{\mathcal{C}} (k+1,t)$, which contradicts to the fact that  $\mathcal{C}$ is reduced. Similarly one can prove that $(k ,i)$ is maximal and  $(k,j), \ (k+1,t)$ are  minimal.

If there is a path from $(k,i)$ to $(k+1,t)$ except $((k,i);(k+1,t))$,
then $(k,i)\succeq_{\mathcal{C}}(k+1,t)$ is implied by the relations involved in the path, which contradicts with that $\mathcal{C}$ is reduced.
There is a path from $(k,i)$ to $(k,j)$ and $ (k+1,t),\ (k+1,s) $ are not involved in these relations.
After removing relations involving $ (k+1,s)$ and $(k+1,j)$ by RR-method we obtain a set of relations $\mathcal{C}''$. It is sufficient to prove that $\mathcal{C}''$ is not admissible.
 It is clear that $(k,i)\succeq_{\mathcal{C}} (k,j)$. Since $\mathcal{C}$ is reduced, there exists $(a,b)$ in $\mathfrak{V}(\mathcal{C}'')$ such that  $a<k$.
 Let $a$ be the minimal
 such that there exist $((a+1,c);(a,b))\in \mathcal{C}''$ and $((a,b);(a+1,d))\in \mathcal{C}''$.
 Then we apply the RR-method to remove the relation that does not involve
$(a,b),(a+1,c),(a+1,d)$. We obtain the set of relations $\{((a+1,c);(a,b)),\ ((a,b); (a+1,d))\}$ which is not admissible by Example \ref{not admissible}. The statement follows.
\end{proof}

\begin{corollary}
Let $\mathcal{C}$ be any noncritical subset of $\mathcal{R} $.
If an indecomposable subset of $\mathcal{C}$ contains crosses then $\mathcal{C}$ is not admissible.
\end{corollary}
\begin{proof}
Let $\mathcal{C}_1$ be a subset of $\mathcal{C}$ which contains all crosses. We apply the RR-method to other subsets of $\mathcal{C}$ until all other subsets become empty. Then we obtain $\mathcal{C}_1$ which is not admissible by Proposition \ref{prop-cros}. Hence $\mathcal{C}$ is not admissible by Theorem \ref{tech}.
\end{proof}

\begin{definition}\label{definition: preadmissible}
Let $\mathcal{C}$ be an indecomposable set. We say that $\mathcal{C} $ is {\it pre-admissible}  if it satisfies the following conditions:

\begin{itemize}
\item[(i)] $\mathcal{C} $ does not contain loops.
\item[(ii)] $\mathcal{C} $ is noncritical.
\item[(iii)] For any $1\leq k\leq n$, $(k, i)\succeq_{\mathcal{C}} (k, j)$ if and only if $(k,i), (k,j)$ are in the same indecomposable subset of   $\mathcal{C} $ and $i<j$.
\item[(iv)] $\mathcal{C} $ is reduced.
\item[(v)] There is not cross in $\mathcal{C} $.
\end{itemize}

An arbitrary set $\mathcal{C}$ is pre-admissible if every  indecomposable subset of $\mathcal{C}$ is pre-admissible.
\end{definition}
The results of the previous sections show that in order to construct $\gl_n$-modules using sets of relations it is enough to consider only pre-admissible sets of relations.

\subsection{$\mathfrak{F}$ set}

\begin{proposition}
Let $\mathcal{C}$ be an indecomposable pre-admissible set.
If $T(L)$ is a $\mathcal{C}$-realization and $l_{ki}-l_{kj}=1$, then
one of the listed conditions holds.
\begin{itemize}
\item[(i)] $\{((k,i); (k-1,s)),\ ((k-1,s);(k,j))\}\subseteq \mathcal{C}$ for some $1\leq s\leq k-1$.
\item[(ii)] $\{((k,i);(k+1,s)),\ ((k+1,s);(k,j))\}\subseteq \mathcal{C}$ for some $1\leq s\leq k+1$.
\item[(iii)] $k=n-1$,  $\{((n-1,i);(n,s)),\ ((n,t); (n-1,j))\}\subseteq \mathcal{C}$ for some $s\leq t$ and $(n,s) \succeq_{\mathcal{C}}(n,t)$.
\end{itemize}
\end{proposition}
\begin{proof}
By Proposition \ref{order} there is a path in $G(\mathcal{C})$ from $(k,i)$ to $(k,j)$. If the path
goes up more than once then $r_{ki}-r_{kj}>1$ for any $T(R)$ satisfying $\mathcal{C}$. It proves the statement.

\end{proof}



Let $\mathcal{C}$ be a set of relations. For $i<j$ we call $\{(k,i),\ (k,j)\}$ an adjoining pair if they are in the same indecomposable subset of $\mathcal{C}$ and there is not $(k,s)$ such that
$(k,i)\succeq_{\mathcal{C}}(k,s)\succeq_{\mathcal{C}}(k,j)$.
Now we  introduce our main sets of relations which lead to admissible sets.
Denote by $\mathfrak{F}$ the set of all indecomposable $\mathcal{C}$ satisfying the following condition:

\begin{condition}\label{condition for admissible}

For every adjoining pair $(k,i)$ and $(k,j)$, $1\leq k\leq n-1$, there exist $p, q$ such that $\mathcal{C}_{1}\subseteq\mathcal{C}$
or, there exist $s<t$ such that $\mathcal{C}_{2}\subseteq\mathcal{C}$, where the graphs associated to $\mathcal{C}_{1}$ and $\mathcal{C}_{2}$ are as follows

\begin{center}
\begin{tabular}{c c c c}
\xymatrixrowsep{0.5cm}
\xymatrixcolsep{0.1cm}
\xymatrix @C=0.2em{
  &   &\scriptstyle{(k+1,p)}\ar[rd]   &   & \\
 \scriptstyle{G(\mathcal{C}_{1})=}  &\scriptstyle{(k,i)}\ar[rd] \ar[ru]  &    &\scriptstyle{(k,j)};   &  \\
   &   &\scriptstyle{(k-1,q)}\ar[ru]   &   & }
&\ \ &
\xymatrixrowsep{0.5cm}
\xymatrixcolsep{0.1cm}\xymatrix @C=0.2em {
   &   &\scriptstyle{(k+1,s)}    &   &\scriptstyle{(k+1,t)}\ar[rd]&& \\
  \scriptstyle{G(\mathcal{C}_{2})=} &\scriptstyle{(k,i)} \ar[ru]  & &   & & \scriptstyle{(k,j)} \\
   &   &   &   & &&}
\end{tabular}
\end{center}
\end{condition}
We finish this section with some technical lemmas which will be used to prove that Condition \ref{condition for admissible} implies admissibility of sets of relations.

\begin{lemma}\label{lemma1}
Let $\mathcal{C}\in \mathfrak{F}$, $T(L)$  a $\mathcal{C}$-realization and $T(R)$ any tableau in  $\mathcal{B}_{\mathcal{C}}(T(L))$ such that $r_{ki}-r_{kj}=1$. If
$A_1:=\{i' \ |\ r_{k+1,i'}=r_{kj} \}$
and $A_2:=\{ j'\ |\  \ r_{k-1,j'}=r_{ki}\}$, then $\#A_1+\#A_2\geq2$.
\end{lemma}
\begin{proof}
Follows directly from $\mathcal{C}\in \mathfrak{F}$.
\end{proof}





Let  $W=(w_{i,j})\in\mathbb{C}^{\frac{n(n+1)}{2}}$. Set
\begin{equation}
e_{ki}(W)=\left\{
\begin{array}{cc}
0,& \text{ if } T(W)\notin  \mathcal {B}{_\mathcal{C}}(T(L))\\
-\frac{\prod\limits_{j=1}^{k+1}(w_{ki}-w_{k+1,j})}{\prod\limits_{j\neq i}^{k}(w_{ki}-w_{kj})},& \text{ if } T(W)\in  \mathcal {B}{_\mathcal{C}}(T(L))
\end{array}
\right.
\end{equation}

\begin{equation}
f_{ki}(W)=\left\{
\begin{array}{cc}
0,& \text{ if } T(W)\notin  \mathcal {B}_{\mathcal{C}}(T(L))\\
\frac{\prod\limits_{j=1}^{k-1}(w_{ki}-w_{k-1,j})}{\prod\limits_{j\neq i}^{k}(w_{ki}-w_{kj})},& \text{ if } T(W)\in  \mathcal {B}_{\mathcal{C}}(T(L))
\end{array}
\right.
\end{equation}

\begin{equation}
h_{k}(W)=\left\{
\begin{array}{cc}
0,& \text{ if } T(W)\notin  \mathcal {B}{_\mathcal{C}}(T(L))\\
 \sum\limits_{i=1}^{k}w_{ki}-\sum\limits_{i=1}^{k-1}w_{k-1,i}+k-1,& \text{ if } T(W)\in\mathcal {B}_{\mathcal{C}}(T(L))
\end{array}
\right.
\end{equation}

\begin{equation}
\Phi(W,z_1,\ldots,z_m)=
\left\{
\begin{array}{cc}
1,& \text{ if }T(W+z_1+\ldots+z_t)\in  \mathcal {B}_{\mathcal{C}}(T(L)) \text{ for any } t\\
0,& \text{ otherwise}.
\end{array}
\right.
\end{equation}

We will denote by $T(v)$ the tableau with variable entries $v_{ij}$.

\begin{lemma}\label{action of center1}

Let $\mathcal{C}\in\mathfrak{F}$, $T(L)$ any $\mathcal{C}$-realization.
\begin{itemize}
\item[(i)] If $T(L+\boldsymbol{\delta}^{kj})\notin \mathcal {B}{_\mathcal{C}}(T(L))$ and
$l_{k,i}-l_{kj}\neq 1$ for any $i$, then  $$\lim\limits_{v\rightarrow l}e_{kj}(v)f_{kj}(v+\boldsymbol{\delta}^{kj})=0.$$
\item[(ii)] If $T(L-\boldsymbol{\delta}^{kj})\notin \mathcal {B}{_\mathcal{C}}(T(L))$ and  $l_{k,j}-l_{k,i}\neq 1$ for any i,
 then $$\lim\limits_{v\rightarrow l}f_{kj}(v)e_{kj}(v-\boldsymbol{\delta}^{kj})=0.$$
\item[(iii)] If $l_{k,i}-l_{k,j}= 1$,
 then $T(L+\boldsymbol{\delta}^{k,j}), T(L-\boldsymbol{\delta}^{k,i})\notin \mathcal {B}{_\mathcal{C}}(T(L))$, and
$$\lim\limits_{v\rightarrow l}e_{kj}(v)f_{kj}(v+\boldsymbol{\delta}^{k,j})-f_{ki}(v)e_{ki}(v-\boldsymbol{\delta}^{k,i})=0.$$
\end{itemize}
\end{lemma}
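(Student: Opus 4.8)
The plan is to verify each of the three limits by a direct computation with the explicit Gelfand-Tsetlin rational functions $e_{kj}$ and $f_{kj}$, organizing the argument around the zero/pole structure of the relevant products. Throughout, I work with the tableau $T(v)$ of variable entries and let $v\to l$ at the end; the key point will be that whenever a product in a numerator acquires a zero in the limit, this zero either persists after the substitution $v\mapsto v+\delta^{kj}$ (or $v-\delta^{ki}$) or else is cancelled in a controlled way by a pole of the companion factor. First I would record the product identities
\[
e_{kj}(v)\,f_{kj}(v+\delta^{kj})=-\frac{\prod_{a=1}^{k+1}(v_{kj}-v_{k+1,a})\,\prod_{b=1}^{k-1}(v_{kj}+1-v_{k-1,b})}{\prod_{i\neq j}(v_{kj}-v_{ki})\,\prod_{i\neq j}(v_{kj}+1-v_{ki})},
\]
and the analogous formula for $f_{kj}(v)\,e_{kj}(v-\delta^{kj})$ obtained by $v_{kj}\mapsto v_{kj}$, shifting down; the denominators are the product of $\prod_{i\ne j}(v_{kj}-v_{ki})$ with a once-shifted copy of itself.

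For part (i): the hypothesis $T(L+\delta^{kj})\notin\mathcal B(T(L))$ means that adding $\delta^{kj}$ breaks some relation of $\mathcal C$. Since $\mathcal C\in\mathfrak F$ and $T(L)$ is noncritical, I would use the $\mathfrak F$-condition (\ref{condition for admissible}) together with Proposition \ref{order} to locate which relation is broken: it must be one of the form $(k,j)>(k+1,s)$ with $l_{k,j}=l_{k+1,s}+1$, or $(k-1,t)>(k,j)$ with $l_{k-1,t}=l_{k,j}+1$, or a top-row relation $(n,t)\ge(n-1,j)$. In the first case the factor $(v_{kj}-1-v_{k+1,s})$... wait—rather, the factor $v_{kj}-v_{k+1,s}$ in $e_{kj}(v)$ does not vanish but $(v_{kj}+1)-v_{k+1,s}$ in... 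I would instead argue that one of the factors of the numerator of $e_{kj}(v)f_{kj}(v+\delta^{kj})$ vanishes at $v=l$ while, by the extra hypothesis $l_{ki}-l_{kj}\neq 1$ for all $i$, the denominator $\prod_{i\neq j}(v_{kj}+1-v_{ki})$ stays nonzero; since also $\prod_{i\ne j}(v_{kj}-v_{ki})\ne0$ by noncriticality, the limit is $0$. Part (ii) is the mirror image, exchanging the roles of the rows $k+1$ and $k-1$ and of $+\delta^{kj}$, $-\delta^{kj}$.

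Part (iii) is the genuine content and I expect it to be the main obstacle. Here both summands individually blow up: $l_{ki}-l_{kj}=1$ forces a zero denominator $\prod_{i'\ne j}(v_{kj}+1-v_{ki'})$ in the first term (from $i'=i$) and $\prod_{i'\ne i}(v_{ki}-1-v_{ki'})$ in the second (from $i'=j$), so the assertion is that the two simple poles cancel. The strategy is to put both terms over the common factor $(v_{ki}-v_{kj}-1)$ and show the numerators agree up to sign at $v=l$. By Lemma \ref{lemma1}, the condition $r_{ki}-r_{kj}=1$ guarantees $\#\{r_{k+1,i'},r_{k-1,j'}\mid r_{k+1,i'}=r_{kj},\ r_{k-1,j'}=r_{ki}\}\ge 2$, which means at least two of the numerator factors — one from $\prod_a(v_{kj}-v_{k+1,a})$ or $\prod_b(v_{kj}+1-v_{k-1,b})$ in the first term, similarly for the second — vanish; this double vanishing is exactly what is needed so that, after cancelling the single pole $(v_{ki}-v_{kj}-1)$, the surviving expression is finite, and a short residue computation (comparing the two numerators factor-by-factor, using $v_{ki}=v_{kj}+1$) shows the difference tends to $0$. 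The delicate bookkeeping will be to check that the factors identified by Lemma \ref{lemma1} really are among those appearing in the numerators with the shift $+\delta^{kj}$ resp. $-\delta^{ki}$ applied, and that no spurious extra pole from the other denominator factors $\prod_{i'\ne i,j}$ interferes — this last point again follows from noncriticality of $T(L)$, which keeps all those factors nonzero in the limit.
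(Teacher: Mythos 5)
Your proposal takes the same route as the paper: for (i)--(ii) identify which relation of $\mathcal{C}$ is violated by $T(L\pm\delta^{kj})$ and observe that the corresponding linear factor of the numerator of $e_{kj}(v)f_{kj}(v+\delta^{kj})$ vanishes at $v=l$ while the hypothesis $l_{ki}-l_{kj}\neq 1$ keeps the shifted denominator nonzero; for (iii) invoke Lemma \ref{lemma1} and cancel the simple pole along $v_{ki}-v_{kj}=1$. The one point to fix is your enumeration of the violated relations in (i). Increasing $l_{kj}$ can never violate $(k,j)>(k+1,s)$ (that inequality only gets stronger), which is the case you first wrote down and then, rightly, doubted; the only relations that can fail for $T(L+\delta^{kj})$ are $(k+1,s)\geq(k,j)$, forcing $l_{k+1,s}=l_{kj}$ and killing the factor $v_{kj}-v_{k+1,s}$ in the numerator of $e_{kj}(v)$, and $(k-1,t)>(k,j)$, forcing $l_{k-1,t}=l_{kj}+1$ and killing the factor $v_{kj}+1-v_{k-1,t}$ in the numerator of $f_{kj}(v+\delta^{kj})$. (The remaining way $T(L+\delta^{kj})$ could leave $\mathcal{B}(T(L))$, namely becoming critical, is excluded by the hypothesis $l_{ki}-l_{kj}\neq1$.) With that corrected list your ``one numerator factor vanishes, no denominator factor vanishes'' argument is exactly the paper's proof, and (ii) is the mirror image. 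For (iii) your plan is what the paper compresses into ``by direct computation'': over the common linear factor the two numerators coincide and the regular parts of the two denominators differ by a sign on the hyperplane $v_{ki}=v_{kj}+1$, so the pole cancels, and the at least two vanishing numerator factors guaranteed by Lemma \ref{lemma1} then force the regularized expression to vanish at $v=l$.
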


\begin{proof}Since $T(L+\boldsymbol{\delta}^{kj})\notin \mathcal {B}{_\mathcal{C}}(T(L))$, we  have $((k+1,s);(k,j))\in\mathcal{C}$ or $((k-1,t);(k,j))\in \mathcal{C}$.
Suppose $((k+1,s);(k,j))\in\mathcal{C}$ and $T(L+\boldsymbol{\delta}^{kj})\notin \mathcal {B}{_\mathcal{C}}(T(L))$. Then $l_{k+1,s}=l_{k,j}$.
 Suppose $((k-1,t);(k,j))\in \mathcal{C}$ and $T(L+\boldsymbol{\delta}^{kj})\notin \mathcal {B}{_\mathcal{C}}(T(L))$. Then $l_{k-1,t}=l_{k,j}+1$
 In both cases one has  $\lim\limits_{v\rightarrow l}e_{kj}(v)f_{kj}(v+\boldsymbol{\delta}^{kj})=0$  by direct computation .

The proof of (ii) is similar to (i).

It is clear that $T(L-\boldsymbol{\delta}^{k,j}),\  T(L+\boldsymbol{\delta}^{k,j+1})\notin \mathcal {B}{_\mathcal{C}}(T(L))$ if $l_{k,j}-l_{k,j+1}= 1$. By
Lemma \ref{lemma1},   one has
$$\lim_{v\rightarrow l}e_{kj}(v)f_{kj}(v+\boldsymbol{\delta}^{k,j+1})-f_{kj}(v)e_{kj}(v-\boldsymbol{\delta}^{k,j})=0.$$
\end{proof}

If  $((i,j);(r,s)),((r,s);(i,j))\notin\mathcal{C}$, we say that there is no direct relation between $(i,j)$ and $(r,s)$.

\begin{lemma} \label{lemma phi1}
 Let $\mathcal{C}\in\mathfrak{F}$, $T(L)$ be a $\mathcal{C}$-realization. Suppose that there is no direct relation between $(i_1,j_1)$ and $(i_2,j_2)$.
 Then
     $T(R+\boldsymbol{\delta}^{i_1j_1}+\boldsymbol{\delta}^{i_2j_2}){ \in \mathcal{B}_\mathcal{C}(T(L))}$ if and only if $T(R+\boldsymbol{\delta}^{i_1j_1}){ \in \mathcal{B}_\mathcal{C}(T(L))}$ and $T(R+\boldsymbol{\delta}^{i_2j_2}) { \in \mathcal{B}_\mathcal{C}(T(L))}$.
\end{lemma}

\begin{proof}
Suppose $T(R+\boldsymbol{\delta}^{i_1j_1}+\boldsymbol{\delta}^{i_2j_2}){ \in \mathcal{B}_\mathcal{C}(T(L))}$. Then
$T(R+\boldsymbol{\delta}^{i_1j_1})$ satisfies all relations that do not involve $(i_1,j_1)$.
It also satisfies all relations between $(i_1,j_1)$ and $(r,s)$, such that $(r,s)\neq(i_2,j_2)$ but there is no direct relation between  $(i_1, j_1)$ and $(i_2, j_2)$. Then $T(R+\boldsymbol{\delta}^{i_1j_1}){ \in \mathcal{B}_\mathcal{C}(T(L))}$. Similarly one can show  $T(R+\boldsymbol{\delta}^{i_2j_2}) { \in \mathcal{B}_\mathcal{C}(T(L))}$ and the converse.
\end{proof}

\subsection{Necessary and sufficient conditions of admissibility
}

Now we  are ready to describe admissible sets of relations.

\begin{theorem}\label{sufficiency of admissible} { A pre-admissible set of relations} $\mathcal{C}$ is admissible if and only if
$\mathcal{C}$ is a union of disconnected sets from $\mathfrak{F}$.
\end{theorem}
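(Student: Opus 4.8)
The plan is to prove the two implications separately, with the bulk of the work in the sufficiency direction. For necessity, I would argue contrapositively: if $\mathcal{C}$ is not a union of disconnected sets from $\mathfrak{F}$, then some indecomposable component $\mathcal{C}_0$ violates one of the defining conditions of $\mathfrak{F}$ in~\eqref{condition for admissible}. Combining this with the reductions already established in the excerpt — one may assume $\mathcal{C}$ is noncritical (else Corollary~\ref{corollary bad set} produces a critical tableau in every $\mathcal{B}_{\mathcal{C}}(T(L))$, and the Gelfand-Tsetlin formulas are not even defined), reduced (Theorem~\ref{equiv to reduced set}), normalized by a $G$-action so that $i<j$ forces the ordering, and free of crosses (the cross-elimination procedure) — the failure of~\eqref{condition for admissible} for an adjoining pair $(k,i),(k,j)$ means there is a $\mathcal{C}$-realization $T(L)$ with $l_{ki}-l_{kj}=1$ for which neither configuration (i)/(ii) of the preceding Proposition holds in the form required by $\mathfrak{F}$; then I would exhibit explicitly a vector $T(L)$ on which the commutator relations~\eqref{e_i, f_j relation} or the Serre relations~\eqref{Serre relations} fail, i.e. $[E_{k,k+1},E_{k+1,k}]$ applied to $T(L)$ does not return $h_k(L)T(L)$, by tracking the residues of the rational coefficients $e_{ki},f_{ki}$ at the singular hyperplane $l_{ki}=l_{kj}$. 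This is essentially the mechanism behind Lemire-Patera's observation that some Gelfand-Graev continuations fail to be modules.

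For sufficiency, I would fix $\mathcal{C}=\mathcal{C}_1\cup\cdots\cup\mathcal{C}_m$ a disjoint union of sets from $\mathfrak{F}$ and a $\mathcal{C}$-realization $T(L)$, and verify directly that the assignment of the Gelfand-Tsetlin formulas~\eqref{Gelfand-Tsetlin formulas} to the basis $\mathcal{B}_{\mathcal{C}}(T(L))$ of $V_{\mathcal{C}}(T(L))$ respects all the defining relations~\eqref{h_i relation}--\eqref{ee and ff relations} of $\gl_n$. Since the relations among the $E_{i,i+1},E_{i+1,i},E_{ii}$ only couple adjacent indices, it suffices to check, for each $k$: that $[E_{kk},E_{k,k+1}]$, $[E_{kk},E_{k+1,k}]$ act correctly (these are immediate from the weight grading and the explicit form of $h_k(L)$); the Serre relations for $|k-k'|=1$; the commuting relations for $|k-k'|>1$ (immediate, by Lemma~\ref{lemma phi1}, since nonadjacent moves $\delta^{ki},\delta^{k'j}$ involve disjoint, unrelated coordinates); and crucially $[E_{k,k+1},E_{k+1,k}]=h_k$. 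The heart of the matter is the last relation, and that is exactly what Lemma~\ref{action of center1} is set up to handle: expanding $[E_{k,k+1},E_{k+1,k}]T(L)$ produces the ``diagonal'' terms $\sum_j\big(e_{kj}(L)f_{kj}(L+\delta^{kj})-f_{kj}(L)e_{kj}(L-\delta^{kj})\big)T(L)$ plus off-diagonal terms $T(L\pm\delta^{ki}\mp\delta^{kj})$; the off-diagonal terms cancel pairwise by a standard partial-fractions identity (as in the classical proof of the Gelfand-Tsetlin theorem), while for the diagonal terms one must take the limit $v\to l$ carefully because individual summands have poles exactly when $l_{ki}=l_{kj}$ for some $i\neq j$ — and the three cases of Lemma~\ref{action of center1}, together with the defining property of $\mathfrak{F}$ encoded in Lemma~\ref{lemma1} (that $l_{ki}-l_{kj}=1$ forces at least two of $l_{k+1,i'}=l_{kj}$, $l_{k-1,j'}=l_{ki}$ to occur), guarantee that these limits exist and sum to $h_k(L)$. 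One also has to confirm that when $T(L\pm\delta^{ki})\notin\mathcal{B}_{\mathcal{C}}(T(L))$ the vanishing built into the $\Phi$-bookkeeping is consistent, i.e. that setting such terms to zero does not spoil any identity — again this is precisely Lemma~\ref{action of center1}(i),(ii), which says the ``missing'' contributions vanish in the limit.

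The main obstacle, and where I would spend the most care, is the boundary analysis in the $[E_{k,k+1},E_{k+1,k}]=h_k$ check: one is applying the naive Gelfand-Tsetlin formulas along a chain of moves, some of which exit $\mathcal{B}_{\mathcal{C}}(T(L))$, and the rational functions $e_{kj},f_{kj}$ have genuine poles along the walls $l_{ki}=l_{kj}$. The point is that $\mathcal{C}\in\mathfrak{F}$ is engineered so that whenever the denominator $\prod_{j\neq i}(l_{ki}-l_{kj})$ threatens to vanish on a $\mathcal{C}$-realization, the numerator — which is a product of factors $(l_{ki}-l_{k\pm1,\ast})$ — vanishes to at least the same order, because the relations forcing $l_{ki}-l_{kj}\in\{0,1\}$ come in pairs with relations forcing $l_{k+1,s}$ or $l_{k-1,t}$ to pin down the colliding entries (this is the content of Lemma~\ref{lemma1} and the Proposition preceding $\mathfrak{F}$). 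So the strategy is: (1) reduce to checking one Serre triple and one $[e,f]=h$ identity per $k$; (2) handle off-diagonal cancellation by the classical identity; (3) reduce the diagonal identity to a sum of limits of the three types in Lemma~\ref{action of center1}; (4) invoke $\mathfrak{F}$ via Lemma~\ref{lemma1} to see all those limits are finite and add up correctly; (5) assemble, using disconnectedness (Lemma~\ref{lemma phi1}) to handle the non-adjacent relations and to reduce everything to the indecomposable case. The Serre relations would be checked by the same residue bookkeeping, as they too reduce to products of the $e_{ki}$'s along length-two chains.
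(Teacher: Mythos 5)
Your plan follows the paper's own proof essentially step for step: necessity by producing a $\mathcal{C}$-realization with a single adjoining pair $l_{ki}-l_{kj}=1$ violating \eqref{condition for admissible} and showing $[e_k,f_k]T(L)\neq h_kT(L)$; sufficiency by reducing to the relations \eqref{h_i relation}--\eqref{ee and ff relations}, treating each coefficient as the limit of the generic (partial-fractions) identity as $v\to R$, with Lemma \ref{lemma phi1} controlling the $\Phi$-bookkeeping, Lemma \ref{lemma1} guaranteeing the numerator vanishing that tames the poles, and Lemma \ref{action of center1} delivering the diagonal term $h_k(R)$ in $[e_k,f_k]=h_k$. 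The only detail you gloss over that the paper handles explicitly is the case where the positions being shifted are mutually related, which is resolved by introducing an auxiliary partial realization $T(v')$ fixing only the related entries; this is a refinement of, not a departure from, your outline.
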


\begin{proof} Let $\mathcal{C}$ be a  union of disconnected sets from $\mathfrak{F}$ and $T(L)$ any $\mathcal{C}$-realization.
In order to prove that $V_{\mathcal{C}}(T(L))$ is a
$\gl_n$-module defined by the Gelfand-Tsetlin formulas one needs to show that $gT(R)=0$
for any $T(R)\in {\mathcal B}_{\mathcal{C}}(T(L))$ and any generator $g$ of $\mathfrak{k}_n$.

First we show that $gT(R)=0$ for $g=[e_i,[e_i,e_j]] \, (|i-j|=1)$.


\begin{equation}\label{formula}
[e_i,[e_i,e_j]]T(R)=\sum_{r,s,t}g_{rst}(R)T(R+\boldsymbol{\delta}^{jr}+\boldsymbol{\delta}^{is}+\boldsymbol{\delta}^{it}),
\end{equation}
where $g_{rst}(R)$ are rational functions in $r_{ij}, 1\leq j\leq i\leq n$.

Now we consider the coefficients $g_{rst}(R)$ of tableaux $T(R+\boldsymbol{\delta}^{jr}+\boldsymbol{\delta}^{is}+\boldsymbol{\delta}^{it})\in{\mathcal B}_{\mathcal{C}}(T(L))$.

\item [(i)] Suppose $s=t$.

\begin{itemize}
\item[(a)] Suppose there is not direct relation between $(i,s)$ and $(j,r)$. By Lemma \ref{lemma phi1} $\Phi(R,\boldsymbol{\delta}^{jr},\boldsymbol{\delta}^{is})=\Phi(R,\boldsymbol{\delta}^{is},\boldsymbol{\delta}^{is})=\Phi(R,\boldsymbol{\delta}^{is},\boldsymbol{\delta}^{jr})=1$.
Then the coefficient of $T(R+\boldsymbol{\delta}^{jr}+2\boldsymbol{\delta}^{is})$ is the limit of the coefficient of $T(v+\boldsymbol{\delta}^{jr}+2\boldsymbol{\delta}^{is})$  in $[e_i,[e_i,e_j]]T(v)$ when $v\rightarrow R$ (here $T(v)$ again is a tableau with free variable entries). Thus
the coefficient of $T(R+\boldsymbol{\delta}^{jr}+2\boldsymbol{\delta}^{is})$ is zero.
\item[(b)] Suppose $((i,s);(j,r))\in \mathcal{C}$ or $((j,r)(i,s))\in \mathcal{C}$.
Assume $((i,s);(j,r))\in \mathcal{C}$. Denote $\mathcal{C}'=\{((i,s);(j,r))\}$

      Let $T(v)$ be the tableau with $v_{s't'}=l_{s't'}$ if $(s',t')=(i,s) \text{ or } (j,r)$, and
      free variable entries $v_{s't'}$ otherwise. Then $T(v)$ is a $\mathcal {C'}$-realization and $V_{\mathcal{C}'}(T(v))$ is a $\gl_n$-module.
        Let $z^{(1)},z^{(2)}\in \{\boldsymbol{\delta}^{jr},\boldsymbol{\delta}^{is}\}$. Then $\Phi(R,z^{(1)},z^{(2)})=\Phi(v,z^{(1)},z^{(2)})$ where $z^{(1)}=z^{(2)}$ only if  $z^{(1)}=z^{(2)}=\boldsymbol{\delta}^{is}$. Therefore the coefficient of $T(R+\boldsymbol{\delta}^{jr}+2\boldsymbol{\delta}^{is})$ is the limit of the coefficient of $T(v+\boldsymbol{\delta}^{jr}+2\boldsymbol{\delta}^{is})$  in $[e_i,[e_i,e_j]]T(v)$  when $v\rightarrow R$, hence, it is zero.
\end{itemize}
\item [(ii)] Suppose $s\neq t$. We have that there is not direct relation between $(i,s)$ and $(i,t)$.
\begin{itemize}
\item[(a)] Suppose there is not direct relation between $(j,r)$ and both $\{(i,s),(i,t)\}$. Then
$\Phi(R,z^{(1)},z^{(2)},z^{(3)})=1$ by Lemma \ref{lemma phi1}, where $(z^{(1)},z^{(2)},z^{(3)})$ is any permutation of $\{\boldsymbol{\delta}^{is},\boldsymbol{\delta}^{it},\boldsymbol{\delta}^{jr}\}$.
 Thus the coefficient of $T(R+\boldsymbol{\delta}^{jr}+\boldsymbol{\delta}^{is} +\boldsymbol{\delta}^{it})$ is zero similarly to (a) in (i).

\item[(b)] Suppose there is a direct relation between $(j,r)$ and one of $\{(i,s),(i,t)\}$. Similarly to (b) in (i), one has that the coefficient of $T(R+\boldsymbol{\delta}^{jr}+\boldsymbol{\delta}^{is} +\boldsymbol{\delta}^{it})$ is zero.

\item[(c)] Suppose there are direct relations between $(j,r)$ and both $\{(i,s), (i,t)\}$.
We give proof for $j=i+1$, when $j=i-1$ it can be proved similarly.
Without loss of generality we assume that $s<t$.
We have that $\{((i,s);(j,r)),((j,r);(i,t))\}\subseteq \mathcal{C}$.
Then $(i,s)$ and $(i,t)$ are adjoining pairs in $i$-th row. There exist $p$ such that
$\{((i,s);(i-1,p)),((i-1,p);(i,t))\}\subseteq \mathcal{C}$.
Let $\mathcal{C}'=\{((i,s);(j,r)),((j,r);(i,t)),((i,s);(i-1,p)),((i-1,p);(i,t))\}$.
It is admissible by Theorem \ref{tech}.
Let $T(v)$ be the tableau with $v_{s't'}=l_{s't'}$ if $(s',t')\in\{(i,s),(i,t),(j,r),(i-1,p)\}$, and
      free variable entries $v_{s't'}$ otherwise. Then $T(v)$ is a $\mathcal {C'}$-realization and $V_{\mathcal{C}'}(T(v))$ is a $\gl_n$-module.
$\Phi(R,z^{(1)},z^{(2)},z^{(3)})=\Phi(v,z^{(1)},z^{(2)},z^{(3)})$, where $(z^{(1)},z^{(2)},z^{(3)})$ is any permutation of $\{\boldsymbol{\delta}^{is},\boldsymbol{\delta}^{it},\boldsymbol{\delta}^{jr}\}$.
Then the coefficient of $T(R+\boldsymbol{\delta}^{jr}+\boldsymbol{\delta}^{is}+\boldsymbol{\delta}^{it})$ is the limit of the coefficient of $T(v+\boldsymbol{\delta}^{jr}+\boldsymbol{\delta}^{is}+\boldsymbol{\delta}^{it})$  in $[e_i,[e_i,e_j]]T(v)$ when $v\rightarrow R$. Thus it is zero.
\end{itemize}

The proof of $gT(R)=0$ for generators
\eqref {h_i relation},
\eqref{h with e_i relation},
\eqref{h with f_j relation},
\eqref{ee and ff relations}
and  $[f_i,[f_i,f_j]]$
 is similar.
In the following we show that $[e_i,f_j]T(R)=\delta_{ij}(H_i-H_j)T(R)$. We have
\begin{equation}
\begin{split}
    [e_i,f_j]T(R)
    =&\sum_{r=1}^{j}\sum_{s=1}^{i}\Phi(R,-\boldsymbol{\delta}^{jr})f_{jr}(R)e_{is}(R+\boldsymbol{\delta}^{jr})T(R-\boldsymbol{\delta}^{jr}+\boldsymbol{\delta}^{is})\\
-&\sum_{r=1}^{j}\sum_{s=1}^{i}\Phi(R,\boldsymbol{\delta}^{is})e_{is}(R)f_{jr}(R+\boldsymbol{\delta}^{is})T(R-\boldsymbol{\delta}^{jr}+\boldsymbol{\delta}^{is}).\\
\end{split}
\end{equation}

Now we consider the coefficients of tableaux $T(R-\boldsymbol{\delta}^{jr}+\boldsymbol{\delta}^{is})\in{\mathcal B}_{\mathcal{C}}(T(L)) $.
If  $(i,r)\neq (j,s)$, then  the coefficient of  $T(R-\boldsymbol{\delta}^{jr}+\boldsymbol{\delta}^{is})$ is zero which can be proved similarly to the case $g=[e_i,[e_i,e_j]] \, (|i-j|=1)$, hence,  $[e_i,f_j]T(R)=0$ if $i\neq j$.

Suppose $i=j=k$.  The coefficient of $T(R-\boldsymbol{\delta}^{ir}+\boldsymbol{\delta}^{is})$ is zero if $r\neq s$.

By { Lemma} \ref{action of center1},
the coefficient of $T(R)$ is
\begin{align*}
&\lim_{v\rightarrow l}\left(\sum_{r=1}^{k}\sum_{s=1}^{k}f_{kr}(v)e_{ks}(v+\boldsymbol{\delta}^{kt})
-\sum_{r=1}^{k}\sum_{s=1}^{k} e_{ks}(v)f_{kr}(v+\boldsymbol{\delta}^{ks})\right)\\
&=\lim_{v\rightarrow R}h_{k}(v)
=h_{k}(R).
\end{align*}
Hence $[e_i,f_j]T(R)=\delta_{ij}h_iT(R)$ and
 $\mathcal{C}$ is admissible.

Conversely, assume  that $\mathcal{C}$ is admissible. We will show that  $\mathcal{C}$ is a union of disconnected sets from $\mathfrak{F}$.
Suppose first that $\mathcal{C}$ is indecomposable. If it does not satisfy condition \eqref{condition for admissible},
then one can choose a $\mathcal{C}$-realization $T(R)$ such that $r_{ki}-r_{kj}=1$ and $r_{ks}-r_{kt}\neq 1$ if
$(s,t)\neq (i,j)$ for some $1\leq k\leq n-1$.
 Thus $[e_k,f_k]T(R)\neq (H_k-H_{k+1})T(R)$ and  $V_{\mathcal{C}}(T(L))$ is not a $\gl_n$-module.
Suppose $\mathcal{C}= \mathcal{C}_1\cup\cdots\cup\mathcal{C}_m$. Without loss of generality we may assume that
$\mathcal{C}_1$ does not satisfy condition \eqref{condition for admissible}. Applying RR-method to $\mathcal{C}$ we can get $\mathcal{C}_1$ which is not admissible. Thus $\mathcal{C}$ is not admissible by Theorem \ref{tech}.
\end{proof}

\section{Admissible Gelfand-Tsetlin modules}

From now on we will assume that $\mathcal{C}$ is an admissible  subset of $\mathcal{R}$ and consider the  $\gl_n$-module
$V_{\mathcal{C}}(T(L))$. It  is endowed with the action of $\gl_n$  by the Gelfand-Tsetlin formulas (\ref{Gelfand-Tsetlin formulas}).


We will analyze the action of the Gelfand-Tsetlin subalgebra  $\Gamma$ on modules  $V_{\mathcal{C}}(T(L))$.
First we show that the action of $\Gamma$ is
preserved by the RR-method. Namely, we have

\begin{lemma}\label{lem-center}
Let  $ \mathcal{C}_1$ and $ \mathcal{C}_2$ be admissible sets such that $\mathcal{C}_2$ is obtained from $ \mathcal{C}_1$ by the RR-method.
 Then  $\Gamma$ acts on   $V_{\mathcal{C}_1}(T(L))$   by
  \eqref{action of Gamma in finite dimensional modules} for any ${\mathcal{C}_1}$-realization  $T(L)$ if and only if 
it acts on   $V_{\mathcal{C}_2}(T(\tilde{L}))$   by
  \eqref{action of Gamma in finite dimensional modules} for any ${\mathcal{C}_2}$-realization  $T(\tilde{L})$.
\end{lemma}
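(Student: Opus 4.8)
The plan is to reduce to the single-step case and then transport the diagonal action of $\Gamma$ across the RR-method by the same rational-function-in-one-variable argument used in the proof of Theorem \ref{tech}. Suppose $\mathcal{C}_2$ is obtained from $\mathcal{C}_1$ by removing all relations involving a fixed pair $(i,j)\in\mathfrak{V}(\mathcal{C}_1)$; the general case follows by iterating. Fix a $\mathcal{C}_2$-realization $T(L)$ and a generator $c_{rs}$ of $\Gamma$. Since the action of $c_{rs}$ on any tableau is given by applying a fixed element of $U(\gl_n)$ via the Gelfand-Tsetlin formulas, we may write, for any $T(L+z)\in\mathcal{B}_{\mathcal{C}_2}(T(L))$,
\begin{equation*}
c_{rs}T(L+z)=\sum_{w\in A}g_{w}(L+z)\,T(L+z+w),
\end{equation*}
where $A$ indexes the nonzero summands and each $g_w$ is a rational function of the entries; our goal is to show $A=\{0\}$ and $g_0(L+z)=\gamma_{rs}(L+z)$.

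The key step is the comparison with the $\mathcal{C}_1$-side. For sufficiently large $m\in\mathbb Z$, the shifted tableau $T(L'+z+m\delta^{ij})$ (with $l'_{st}=l_{st}$ for $(s,t)\neq(i,j)$) is a $\mathcal{C}_1$-realization, so by hypothesis $\Gamma$ acts on $V_{\mathcal{C}_1}(T(L'+m\delta^{ij}))$ by \eqref{action of Gamma in finite dimensional modules}, i.e.\ $c_{rs}$ acts diagonally with eigenvalue $\gamma_{rs}(L'+z+m\delta^{ij})$ on that tableau. As in the proof of Theorem \ref{tech}, for $m$ large a summand $T(L+z+w)$ vanishes in $V_{\mathcal{C}_2}(T(L))$ if and only if $T(L'+z+m\delta^{ij}+w)$ vanishes in $V_{\mathcal{C}_1}(T(L'+m\delta^{ij}))$, and the coefficients $g_w(L'+z+m\delta^{ij})$ and $\gamma_{rs}(L'+z+m\delta^{ij})$ are rational functions of the single variable $m$. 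The diagonal action on the $\mathcal{C}_1$-side forces these rational functions to agree for infinitely many $m$, hence identically; specializing back to $m$ corresponding to $L$ (equivalently, evaluating the rational identity at the original value) gives $A=\{0\}$ and $g_0(L+z)=\gamma_{rs}(L+z)$. Thus $\Gamma$ acts on $V_{\mathcal{C}_2}(T(L))$ by \eqref{action of Gamma in finite dimensional modules}.

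The converse direction is analogous but slightly more delicate: given the diagonal action on all $\mathcal{C}_2$-realizations, one must recover it on a $\mathcal{C}_1$-realization $T(\tilde L)$. Here I would observe that any $\mathcal{C}_1$-realization $T(\tilde L)$ is automatically a $\mathcal{C}_2$-realization (since $\mathcal{C}_2\subseteq\mathcal{C}_1$), so in fact $V_{\mathcal{C}_2}(T(\tilde L))$ makes sense and contains $V_{\mathcal{C}_1}(T(\tilde L))$; but $\mathcal{C}_2$ being the weaker set, $\mathcal{B}_{\mathcal{C}_1}(T(\tilde L))\subseteq\mathcal{B}_{\mathcal{C}_2}(T(\tilde L))$, and the Gelfand-Tsetlin formulas restricted to the $\mathcal{C}_1$-tableaux agree with the truncated formulas defining $V_{\mathcal{C}_1}(T(\tilde L))$ because the extra vanishing imposed by $\mathcal{C}_1$ only kills additional summands. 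Hence the diagonal eigenvalue $\gamma_{rs}$ on $V_{\mathcal{C}_2}(T(\tilde L))$ restricts to the same eigenvalue on the submodule $V_{\mathcal{C}_1}(T(\tilde L))$.

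The main obstacle I anticipate is the bookkeeping in the converse: one must check that passing from $\mathcal{C}_1$ to $\mathcal{C}_2$ genuinely only removes summands (never creates new nonzero ones or changes a coefficient), so that $V_{\mathcal{C}_1}(T(\tilde L))$ embeds as a $\Gamma$-stable subspace of $V_{\mathcal{C}_2}(T(\tilde L))$ with the inherited action. This is where the precise definition of $\mathcal{B}_{\mathcal{C}}$ and the convention that out-of-$\mathcal{B}$ tableaux are set to zero must be used carefully; once that is in place, everything reduces to the one-variable rational-function argument already established for Theorem \ref{tech}.
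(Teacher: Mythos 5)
Your first direction (diagonal action for $\mathcal{C}_1$-realizations implies it for $\mathcal{C}_2$-realizations) is essentially the paper's argument: shift the $(i,j)$-entry so that the removed relations are restored, match the vanishing patterns of the bounded shifts appearing in $c_{rs}T(L+z)$, and conclude by the identity of rational functions in the single shift variable. That part is fine and is exactly how the paper handles the lemma (it writes out the opposite implication explicitly and declares the other ``similar'').

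The converse direction as you present it has a genuine gap. First, a $\mathcal{C}_1$-realization $T(\tilde L)$ is \emph{not} automatically a $\mathcal{C}_2$-realization: the definition of a realization for an extended set requires $l_{ki}-l_{kj}\in\mathbb{Z}$ \emph{if and only if} $(k,i)$ and $(k,j)$ lie in the same indecomposable component, and removing the relations through $(i,j)$ can split a component, so the ``only if'' half fails for $\mathcal{C}_2$. More seriously, even when $T(\tilde L)$ is a $\mathcal{C}_2$-realization, $\Span\mathcal{B}_{\mathcal{C}_1}(T(\tilde L))$ is not a $\gl_n$-submodule of $V_{\mathcal{C}_2}(T(\tilde L))$: the generators send tableaux of $\mathcal{B}_{\mathcal{C}_1}$ to tableaux of $\mathcal{B}_{\mathcal{C}_2}\setminus\mathcal{B}_{\mathcal{C}_1}$, and the two module structures on the shared basis labels are genuinely different because the Gelfand--Tsetlin formulas depend on which summands are declared zero. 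In particular the diagonal coefficient of a generator $c_{rs}$ of $\Gamma$ on $T(R)$ is a sum over \emph{intermediate} tableaux (e.g.\ the $T(R)$-component of $E_{k,k+1}E_{k+1,k}T(R)$ runs over those $T(R-\delta^{ki})$ that survive in the chosen $\mathcal{B}$), so ``killing additional summands'' changes the eigenvalue, not just the off-diagonal terms. The paper's Section 6 example (and Proposition \ref{nonadmissible}, where $c_{k+1,t}T(L)=\gamma_{k+1,t}(L')T(L)$ for a tableau $L'\neq L$) shows concretely that the $\Gamma$-eigenvalue is not determined by the tableau label alone. So the restriction argument you flag as ``the main obstacle'' cannot be put in place. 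The paper's proof of this direction is instead symmetric to the first one: starting from the $\mathcal{C}_1$-realization $T(L)$, push the $(i,j)$-entry far in the direction that breaks the relations through $(i,j)$, obtaining a $\mathcal{C}_2$-realization $T(L')$ whose vanishing pattern for the bounded shifts occurring in $c_{rs}$ agrees with that of $T(L)$ in $V_{\mathcal{C}_1}$; the hypothesis then gives the eigenvalue at $T(L')$, and the rational-function argument in the one variable $l_{ij}$ transports it back to $T(L)$.
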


\begin{proof}
We give the proof in the case when $\mathcal{C}_2$ is obtained from $ \mathcal{C}_1$ by removing the relations involving $(i,j)$ which is minimal. If $(i,j)$ is maximal the statement can be proved by the same argument.

Suppose $\Gamma$ acts on   $V_{\mathcal{C}_1}(T(L))$   by
  \eqref{action of Gamma in finite dimensional modules} for any ${\mathcal{C}_1}$-realization.
  Let $T(\tilde{L})$ be any ${\mathcal{C}_2}$-realization, then there exists a ${\mathcal{C}_1}$-realization $T(L)$ such that
 $\tilde{l}_{rs}=l_{rs}$ for any $(r,s)\neq (i,j)$.
 Let
 $T(L')=T(L-p\boldsymbol{\delta}^{ij})$, $p\in \mathbb Z_{\geq 0}$, be a tableau satisfying
$ \mathcal{C}_1$.
Then for all possible $m$ and $k$ the equality  $c_{mk}(T(L'))=\gamma_{mk}(l')T(L')$ holds for any $p\geq 0$.
When $p>n$, for any $T(\tilde{L}+z')$ appearing in the expansion of $c_{mk}(T(\tilde{L}))$  we have that $T(\tilde{L}+z')\in \mathcal{B}_{\mathcal{C}_1}(T(L))$ if and only if $T(L'+z')\in \mathcal{B}_{\mathcal{C}_2}(T(L'))$. Considering $\gamma_{mk}(l')$ as a rational function in $p$ we conclude that $c_{mk}(T(\tilde{L}))=\gamma_{mk}(l)T(\tilde{L})$.

Conversely, suppose $\Gamma$ acts on   $V_{\mathcal{C}_2}(T(\tilde{L}))$    by
  \eqref{action of Gamma in finite dimensional modules} for any ${\mathcal{C}_2}$-realization.
Let $T(L)$ be any ${\mathcal{C}_1}$-realization, then there exists a ${\mathcal{C}_2}$-realization $T(\tilde{L})$ such that
 $\tilde{l}_{rs}=l_{rs}$ for any $(r,s)\neq (i,j)$. We have three cases to consider. If $m<i$, we have  $c_{mk}(T(L))=\gamma_{mk}(l)T(L)$ since  $\tilde{l}_{rs}=l_{rs}$ for $r<i$. Suppose now that $m=i$. Then if there is not direct relation between $(i,j)$ and $(i-1,s)$ for any $1\leq s \leq i-1$.
  For any $T(L+z')$ appearing in the expansion of $c_{mk}(T(\tilde{L}))$  we have that $T(L+z')\in \mathcal{B}_{\mathcal{C}_1}(T(L))$ if and only if $T(\tilde{L}+z')\in \mathcal{B}_{\mathcal{C}_2}(T(\tilde{L}))$.

  If $((i-1,s);(i,j))\in \mathcal{C}$ for some $1\leq s\leq i-1$. Since $c_{mk}\in U(\gl_i)$ and the elements in $\gl_i$ do not change the $(r,s)$-th entry of the tableau,, $i\leq r \leq n$, in this case it is enough to prove the statement for $i=n$.
  Since $i=n$, $T(L)$ is a ${\mathcal{C}_2}$-realization. The vector space $W$ spanned by tableaux $T(L+z)$ with
  $(l+z)_{i-1,s}\leq (l+z)_{i,j}$ is a submodule of $V_{\mathcal{C}_2}(T(L))$, then  $V_{\mathcal{C}_1}(T(L))$ is a quotient of $V_{\mathcal{C}_2}(T(L))$ so we have that $c_{mk}(T(L))=\gamma_{mk}(l)T(L)$ in the module $V_{\mathcal{C}_1}(T(L))$. Finally, if $m>i$ then by the above argument we have that $c_{mk}(T(L-n\boldsymbol{\delta}^{ij}))=\gamma_{mk}(L-n\boldsymbol{\delta}^{ij})T(L-n\boldsymbol{\delta}^{ij})$.
Since $m>i$, we have $\gamma_{mk}(L-n\boldsymbol{\delta}^{ij})=\gamma_{mk}(L-n\boldsymbol{\delta}^{ij})$.
Consider the $\gl_m$-module $W$ generated by  $T(L-n\boldsymbol{\delta}^{ij})$, if $T(L)$ is in $W$ then $c_{mk}(T(L))=\gamma_{mk}(l)T(L)$.

  Assume the contrary, $T(L)\notin W$. Let $p$ be the minimal such that $T(L-(p-1)\boldsymbol{\delta}^{ij})\notin W$
  and  $T(L-t\boldsymbol{\delta}^{ij})\in W$ for $p\leq t \leq n$. Consider the equation
  $E_{i,i+1}T(L-p\boldsymbol{\delta}^{ij})= \sum_{t=1}^{i}e_{it}(L-p\boldsymbol{\delta}^{ij})T(L-p\boldsymbol{\delta}^{ij}+\boldsymbol{\delta}^{it})$.

Since $T(L-(p-1)\boldsymbol{\delta}^{ij})$ and $T(L-p\boldsymbol{\delta}^{ij})$  are in $\mathcal{B}_{\mathcal{C}_1}(T(L))$,
the coefficient of $T(L-(p-1)\boldsymbol{\delta}^{ij})$ is non zero. By 2), each tableau appearing on the right hand side
has a different eigenvalue corresponding to $Z_i$, the center of $U_{gl_i}$,
so $T(L-(p-1)\boldsymbol{\delta}^{ij})\in W$ which contradicts with the minimality of $p$.
Thus $T(L)\in W$ and $c_{mk}(T(L))=\gamma_{mk}(l)T(L)$.

\end{proof}



\begin{corollary}\label{lem center generic}
Let $T(L)$ be a generic  Gelfand-Tsetlin tableau of height $n$. Then
the action of
the generators of the Gelfand-Tsetlin subalgebra on $V_{\emptyset}(T(L))=\Span {\mathcal B}_{\emptyset}(T(L))$ is given by the  formula \eqref{action of Gamma in finite dimensional modules}.
\end{corollary}

\begin{proof}
Applying the RR-method to $\mathcal{S}$, after finitely many steps we can remove all the relations in $\mathcal{S}$.  The statement follows from Lemma \ref{lem-center}.
\end{proof}

We have

\begin{theorem}\label{thm action of gamma}
For any admissible $\mathcal{C}$ the module $V_{\mathcal{C}}(T(L))$ is a Gelfand-Tsetlin module with diagonalizable action of
the generators of the Gelfand-Tsetlin subalgebra  given by the  formula \eqref{action of Gamma in finite dimensional modules}.
\end{theorem}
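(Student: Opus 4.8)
The plan is to reduce the general admissible case to the already-understood case of standard relations via the RR-method, using the structural results established above. First I would invoke Theorem~\ref{sufficiency of admissible}, which tells us that an admissible $\mathcal{C}$ is a union of disconnected sets from $\mathfrak{F}$; more usefully, combined with the construction in the preceding subsection, each indecomposable admissible piece is (up to the $G$-action of Corollary~\ref{f2}) obtained from a subset of $\mathcal{S}$ by the RR-method, i.e. lies in $\mathfrak{F}_3$. So it suffices to prove the statement in two reduction steps: (1) for $\mathcal{C}=\mathcal{S}\cup\mathcal{S}^0$ (the standard relations), and (2) propagate along the RR-method and the $G$-action.

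For the base case, recall the Remark attributed to \cite{Zh}: in any irreducible finite-dimensional module the generators $c_{rs}$ of $\Gamma$ act on a standard tableau $T(L)$ by $c_{rs}(T(L))=\gamma_{rs}(l)T(L)$, where $\gamma_{rs}$ is the polynomial from \eqref{def-gamma}. Since $\mathcal{S}$-realizations include all standard tableaux with a fixed (suitably generic, or integral dominant) top row, and since $c_{rs}$ acts on $V_{\mathcal{S}}(T(L))$ by the Gelfand-Tsetlin formulas, the identity $c_{rs}(T(R))=\gamma_{rs}(r)T(R)$ holds on a Zariski-dense set of tableaux and hence, the relevant coefficients being rational in the entries, on all of $V_{\mathcal{S}}(T(L))$. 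More carefully, one expresses $c_{rs}(T(R))$ as a sum over $z$ of coefficients times $T(R+z)$, and shows each off-diagonal coefficient vanishes and the diagonal one equals $\gamma_{rs}(r)$, by comparing with the finite-dimensional situation where this is known; the polynomiality/rationality in the free entries lets us pass from the dominant-integral locus to arbitrary $\mathcal{S}$-realizations. This gives diagonalizability with eigenvalues $\gamma_{rs}$, hence the decomposition \eqref{equation-Gelfand-Tsetlin-module-def} into generalized (in fact genuine) $\Ga$-eigenspaces, so $V_{\mathcal{S}}(T(L))$ is a Gelfand-Tsetlin module.

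Next I would apply Lemma~\ref{lem-center}: if $\mathcal{C}_2$ is obtained from $\mathcal{C}_1$ by the RR-method, then $\Gamma$ acts on $V_{\mathcal{C}_1}(T(L))$ by \eqref{action of Gamma in finite dimensional modules} for all $\mathcal{C}_1$-realizations iff it acts on $V_{\mathcal{C}_2}(T(\tilde L))$ by the same formula for all $\mathcal{C}_2$-realizations. Starting from the base case $\mathcal{C}=\mathcal{S}$ (equivalently $\mathcal{S}\cup\mathcal{S}^0$) and iterating, every $\mathcal{C}\in\mathfrak{F}_1$ has $\Gamma$ acting by \eqref{action of Gamma in finite dimensional modules}; since the Gelfand-Tsetlin formulas and $\Gamma$'s action are invariant under $G$ (Corollary~\ref{f2}), the same holds for every $\mathcal{C}\in\mathfrak{F}_2$; and for a union of disconnected pieces $\mathcal{C}=\mathcal{C}_1\cup\cdots\cup\mathcal{C}_m\in\mathfrak{F}_3$ the action of each $c_{rs}$ only sees the entries in a single indecomposable block (the generators $c_{mk}$ live in $U_m$), so the eigenvalue statement tensors together across blocks. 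Finally, by Theorem~\ref{sufficiency of admissible} every admissible $\mathcal{C}$ is of this form, so $c_{rs}$ acts on each basis tableau $T(R)\in\mathcal{B}_{\mathcal{C}}(T(L))$ by the scalar $\gamma_{rs}(r)$. Consequently $V_{\mathcal{C}}(T(L))=\bigoplus_{\sm}V_{\mathcal{C}}(T(L))(\sm)$ where the sum runs over the maximal ideals of $\Ga$ corresponding to the characters $\chi_R$, each summand being spanned by the (finitely many, since top row is fixed and the ordering constraints of $\mathcal{C}$ together with Remark~\ref{maximal ideals differ by permutations} bound the fibre) tableaux sharing that character; this is exactly the definition of a Gelfand-Tsetlin module, with $\Gamma$ acting diagonally.

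The main obstacle I anticipate is the base-case verification that the off-diagonal coefficients of $c_{rs}(T(R))$ genuinely vanish on all of $V_{\mathcal{S}}(T(L))$, not merely in the finite-dimensional specialization: one must argue that the coefficient functions are rational in the entries $l_{ij}$, agree with the (known to be diagonal) finite-dimensional action on a Zariski-dense subset, and that "being zero" or "equaling $\gamma_{rs}$" therefore persists. This requires care because the Gelfand-Tsetlin formulas have denominators $\prod_{j\neq i}(l_{ki}-l_{kj})$ that could vanish — but noncriticality of $\mathcal{S}$ (and of any admissible $\mathcal{C}$) guarantees these denominators are nonzero on every realization, so the relevant functions are honestly regular along the locus we deform through, and the density argument goes through. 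A secondary, more bookkeeping-type obstacle is making precise that finitely many tableaux share a given $\Ga$-character, which follows from the fixed top row plus the fact (Remark~\ref{maximal ideals differ by permutations}) that the fibre of $\pi$ over a maximal ideal of $\Ga$ is a single $G$-orbit, hence finite.
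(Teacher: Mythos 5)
Your overall strategy --- establish the diagonal action of $\Gamma$ for the standard set $\mathcal{S}\cup\mathcal{S}^0$ via \cite{Zh} together with a rationality/density argument, then transport it along the RR-method using Lemma~\ref{lem-center} --- is exactly the paper's strategy, and your treatment of the base case (rational coefficients agreeing with the known diagonal action on the dense family of standard tableaux, with noncriticality keeping the denominators away from zero) matches the paper's argument. The genuine gap is in the reduction step: you assert that every indecomposable admissible set lies in $\mathfrak{F}_2$, i.e.\ is a $G$-translate of a set obtained from a subset of $\mathcal{S}$ by the RR-method, so that every admissible set lies in $\mathfrak{F}_3$. Nothing in the paper establishes this. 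Theorem~\ref{sufficiency of admissible} characterizes admissible sets as unions of disconnected sets from $\mathfrak{F}$, where $\mathfrak{F}$ is defined by the local condition \eqref{condition for admissible}; the construction of $\mathfrak{F}_1,\mathfrak{F}_2,\mathfrak{F}_3$ only proves the inclusion of $\mathfrak{F}_3$ into the admissible sets, never the reverse containment $\mathfrak{F}\subseteq\mathfrak{F}_2$. As written, your argument therefore covers only those admissible sets that happen to lie in $\mathfrak{F}_3$.

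The repair is to run the RR-method in the other direction and exploit that Lemma~\ref{lem-center} is an equivalence: starting from an arbitrary admissible $\mathcal{C}$ one reaches $\emptyset$ after finitely many removals (stripping relations from positions whose entries admit infinitely many integer shifts, working outward from the extremal positions), and likewise $\emptyset$ is reached from $\mathcal{S}$. Since the $\Gamma$-action is diagonal with eigenvalues $\gamma_{mk}$ on $V_{\mathcal{S}}(T(L))$, the forward implication of Lemma~\ref{lem-center} gives the same for $V_{\emptyset}(T(L))$ (generic modules), and the reverse implication then gives it for $V_{\mathcal{C}}(T(L))$ for every admissible $\mathcal{C}$, with no need to realize $\mathcal{C}$ itself as an RR-descendant of $\mathcal{S}$. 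A smaller imprecision in your last step: for a union of disconnected blocks the generator $c_{mk}$ does not ``see only one block'' --- the eigenvalue $\gamma_{mk}$ involves all entries of row $m$, which may be distributed among several blocks --- so the blockwise ``tensoring'' of eigenvalues also needs to be replaced by the route through $\emptyset$.
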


\begin{proof} If $\mathcal{C}$
  is an arbitrary admissible set then applying the RR-method we get $\emptyset$ after finitely many steps. Lemmas \ref{lem-center} and \ref{lem center generic} imply the statement.
\end{proof}

We call $\mathcal{C}$  a  {\it maximal} set of relations for $T(L)$, if $T(L)$ satisfies $\mathcal{C}$  and $\mathcal{C}$ implies any set of relations $\mathcal{C}'$ satisfied by $T(L)$.

We  call  $V_{\mathcal{C}}(T(L))$ {\it admissible Gelfand-Tsetlin module} associated with the admissible set of relations $\mathcal{C}$. Note that
$V_{\mathcal{C}}(T(L))$ is infinite dimensional if
 $\mathcal{C}$ is not equivalent to $\mathcal{S} $.

\begin{lemma}\label{lemma separation}
Let $\sum_{i=1}^{r}a_{i}T(L_i)\in V_{\mathcal{C}}(T(L))$ with all $a_{i}$ non-zero.
Then   $T(L_i)\in V_{\mathcal{C}}(T(L))$ for all $1\leq i\leq r$.
\end{lemma}

\begin{proof}
It follows from Theorem \ref{thm action of gamma} that the Gelfand-Tsetlin subalgebra has different  spectrum on each $T(L_i)$.
Hence  $T(L_i)\in V_{\mathcal{C}}(T(L))$ for each $i$.
\end{proof}

\begin{lemma}\label{lemma sequences}
Let $\mathcal{C}$ be an admissible set of relations.
 If $T(L)$ and $T(R)=T(L+z)$ satisfy $\mathcal{C}$. Then there exist $\{(i_t,j_t)\}_{t=1,\ldots,s}\subseteq \mathfrak{V}$
such that for any $1\leq r\leq s$, $T(L+\sum_{t=1}^{r}\epsilon_t\boldsymbol{\delta}^{i_t,j_t})$ satisfies $\mathcal{C}$
and $T(L+\sum_{t=1}^{s}\epsilon_t\boldsymbol{\delta}^{i_t,j_t})=T(R)$,
where
$\epsilon_t= 1$ if $r_{i_t,j_t}-l_{i_t,j_t}\geq 0$ and
$\epsilon_t= - 1$ if $r_{i_t,j_t}-l_{i_t,j_t}< 0$.
\end{lemma}

\begin{proof}
We prove the statement by induction on $\#\mathfrak{V}(\mathcal{C} )$. It is obvious if $\#\mathfrak{V}(\mathcal{C})=2$. Assume $\#\mathfrak{V}(\mathcal{C} )=n>2$. Let $(i,j)$ be maximal
and consider $\mathcal{C}_{ij}$. By induction, there exist
 sequences $(i_t',j_t')$ $1\leq t \leq s$
such that for any $p\leq s$, $T(L+\sum_{t=1}^{p}\epsilon_t\boldsymbol{\delta}^{i_t',j_t'})$ satisfies  $\mathcal{C}_{ij}$
and $T(L+\sum_{t=1}^{s}\epsilon_t\boldsymbol{\delta}^{i_t',j_t'})=T(R+l_{ij}-r_{ij})$.

If $l_{ij}-r_{ij}=m \geq 0$, set  $(i_t,j_t)=(i_t',j_t')$ for $1\leq t \leq s$, and $(i_t,j_t)=(i,j)$ for $s+1 \leq t\leq t+m$.

If $l_{ij}-r_{ij}=m < 0$, set $(i_t,j_t)=(i,j)$ for $ 1 \leq t\leq m$ and $(i_{m+t},j_{m+t})=(i_t',j_t')$
for $1\leq t \leq s$.
\end{proof}

\begin{theorem}\label{thm-irr}
The Gelfand-Tsetlin module $V_{\mathcal{C}}(T(L))$ is simple if and only if $\mathcal{C}$ is the maximal (admissible) set of relations satisfied by $T(L)$.
\end{theorem}

\begin{proof}
Suppose
$\mathcal{C}$ is not the maximal set of relations satisfied by $T(L)$. Then
$l_{k+1, i}-l_{k, j}\in \mathbb{Z}$ for some indexes and there is no relation between $(k+1, i)$ and $(k, j)$.
So there exists a tableau $T(R)=(r_{st})\in U T(L)$ such that $r_{k+1, i}-r_{k, j}\in \mathbb{Z}_{\geq 0}$
and $T(Q)=(q_{st})\in UT(L)$ such that $q_{k , j}-q_{k+1, i}\in \mathbb{Z}_{>0}$.
By  the Gelfand-Tsetlin formulas one has that  $T(Q)$ is not in the submodule  of $V_{\mathcal{C}}(T(L))$ generated by $T(R)$ and thus $V_{\mathcal{C}}(T(L))$ is not simple.

Conversely, let $\mathcal{C}$ be the maximal set of relations satisfied by $T(L)$. By Lemma \ref{lemma sequences}, for any tableau  $ T(R)\in \mathcal{B}_{\mathcal{C}}(T(L))$, there exist $\{(i_t,j_t)\}$ $1\leq t \leq s$
such that for any $r\leq s$, $T(L+\sum_{t=1}^{r}\epsilon_t\boldsymbol{\delta}^{i_t,j_t})$ satisfies  $\mathcal{C}$
and $T(L+\sum_{t=1}^{s}\epsilon_t\boldsymbol{\delta}^{i_t,j_t})=T(R)$.
If $T(L)$ and $T(L+\boldsymbol{\delta}^{ij})$ satisfy $\mathcal{C}$,
then $l_{i,j}\neq l_{i+1,j'}$
for any $j'$. Similarly if $T(L)$ and $T(L-\boldsymbol{\delta}^{i,j})$ satisfy $\mathcal{C}$, then $l_{i,j}\neq l_{i-1,j'}$ for any $j'$. Thus the coefficient of $T(L+  \boldsymbol{\delta}^{i ,j })$ in
$E_{i,i+1 }T(L)$ (respectively $T(L-\boldsymbol{\delta}^{i ,j })$ in
$E_{i+1,i }T(L)$)
is nonzero. By Lemma \ref{lemma separation},
$T(L \pm\boldsymbol{\delta}^{i_1,j_1}) \in V_{\mathcal{C}}(T(L))$. Repeating the argument $s$ times we conclude that $T(R) \in V_{\mathcal{C}}(T(L))$.
\end{proof}

\begin{definition}
We will say that
a tableau
$T(L)$ satisfies the \it{FRZ-condition} if it is a realization of some admissible set $\mathcal{C}\subset \mathcal{R} $.

\end{definition}

Therefore, each admissible set $\mathcal{C}$ defines infinitely many  tableaux satisfying the FRZ-condition, each of which gives rise to an simple admissible Gelfand-Tsetlin module. These simple modules form infinitely many isomorphism classes.

\begin{theorem}\label{thm-mult}
For any $\sm\in \Sp \Gamma$ from the Gelfand-Tsetlin  support of $V_{\mathcal{C}}(T(L))$,  the Gelfand-Tsetlin multiplicity of $\sm$ is one.
\end{theorem}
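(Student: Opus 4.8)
The plan is to reduce the statement to a purely combinatorial injectivity claim. By the preceding theorem the generators $c_{mk}$ of $\Gamma$ act in the tableau basis $\mathcal{B}_{\mathcal{C}}(T(L))$ of $V_{\mathcal{C}}(T(L))$ diagonally, by the scalars $\gamma_{mk}(R)$ of \eqref{action of Gamma in finite dimensional modules}; hence every element of $\Gamma$ is diagonal in this basis, $\Gamma$ acts semisimply, and for $\sm\in\Sp\Gamma$ the space $V_{\mathcal{C}}(T(L))(\sm)$ is precisely the span of those basis tableaux $T(R)$ whose Gelfand-Tsetlin character is $\sm$. Since distinct basis tableaux are linearly independent, the Gelfand-Tsetlin multiplicity of $\sm$ equals the number of $T(R)\in\mathcal{B}_{\mathcal{C}}(T(L))$ of character $\sm$. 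So it suffices to prove that \emph{two tableaux in $\mathcal{B}_{\mathcal{C}}(T(L))$ with the same Gelfand-Tsetlin character must coincide}; this gives multiplicity at most one everywhere, hence exactly one on the Gelfand-Tsetlin support.

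Next I would translate equality of characters. By \eqref{action of Gamma in finite dimensional modules} the character of $T(R)$ is determined by the numbers $\gamma_{mk}(R)$, $1\le k\le m\le n$, i.e.\ by the image of $R$ in $\Sp\Gamma$ under $\pi$; by Remark \ref{maximal ideals differ by permutations}, $T(R),T(R')\in\mathcal{B}_{\mathcal{C}}(T(L))$ have equal character if and only if $R$ and $R'$ lie in the same $G$-orbit, say $r'_{k,j}=r_{k,\tau_k(j)}$ for permutations $\tau_k\in S_k$, $1\le k\le n$. The target becomes: every $\tau_k$ acts trivially on the entries, so $R=R'$.

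The core of the argument is as follows. Both $R$ and $R'$ arise from $L$ by adding integer vectors supported on rows $1,\dots,n-1$ (definition of $\mathcal{B}_{\mathcal{C}}(T(L))$), so the three tableaux share the top row and $r'_{k,j}-r_{k,j}\in\mathbb{Z}$ for $k\le n-1$. Fix $k\le n-1$. For each $(k,j)$ we get $r_{k,\tau_k(j)}-r_{k,j}=r'_{k,j}-r_{k,j}\in\mathbb{Z}$; since $T(L)$ is a $\mathcal{C}^*$-realization, two entries of row $k$ have integer difference exactly when their positions lie in a common $\mathfrak{V}(\mathcal{C}^*_s)$, while a position of row $k$ lying in no $\mathfrak{V}(\mathcal{C}^*_s)$ has non-integer difference with every other entry of that row and so is fixed by $\tau_k$. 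Hence $\tau_k$ preserves each block $B_s=\mathfrak{V}(\mathcal{C}^*_s)\cap\{(k,\,\cdot\,)\}$. On $B_s$ the relations of $\mathcal{C}^*_s$ induce a total order (Proposition \ref{order}), and by Proposition \ref{order} the strict inequalities among the entries of \emph{any} tableau satisfying $\mathcal{C}$ realize exactly that order; moreover these entries are pairwise distinct since all tableaux in $\mathcal{B}_{\mathcal{C}}(T(L))$ are noncritical. Applying this to both $R$ and $R'$ shows that $j\mapsto\tau_k(j)$ is an order-preserving bijection of the finite totally ordered set $B_s$, hence the identity on $B_s$; as $s$ is arbitrary, $\tau_k$ acts trivially on row $k$. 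Thus $R$ and $R'$ agree on rows $1,\dots,n-1$ and (both carrying the fixed top row) on row $n$, so $R=R'$.

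I expect the only delicate point to be the bookkeeping in the core step: confirming that within each row the integer-difference classes are precisely the blocks $\mathfrak{V}(\mathcal{C}^*_s)\cap\{(k,\,\cdot\,)\}$ together with singletons — which is exactly the content of the definition of a $\mathcal{C}^*$-realization — and that Proposition \ref{order} really fixes the order on $\mathfrak{V}(\mathcal{C})$ uniformly over all tableaux satisfying $\mathcal{C}$. Once these are in hand, the monotone-bijection-is-the-identity observation closes the argument and the theorem follows.
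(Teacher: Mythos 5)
Your proposal is correct and follows essentially the same route as the paper's (very terse) proof: reduce to showing that two basis tableaux with equal Gelfand--Tsetlin character must lie in the same $G$-orbit (Remark \ref{maximal ideals differ by permutations}), and then rule out a nontrivial permutation using the rigidity coming from Proposition \ref{order} together with noncriticality of all tableaux in ${\mathcal B}_{\mathcal{C}}(T(L))$. The only difference is presentational: the paper phrases the last step as a contradiction (a nontrivial $\sigma$ would force a critical tableau into ${\mathcal B}_{\mathcal{C}}(T(L))$), whereas you show directly that the permutation fixes each integer-difference block and hence is the identity --- a more detailed write-up of the same idea.
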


\begin{proof}
The action of $\Gamma$ is given by the formulas (\ref{action of Gamma in finite dimensional modules}), and hence determined by the values
 of  symmetric polynomials on the entries of the rows of the tableaux. Given two Gelfand-Tsetlin tableaux $T(L)$ and $T(R)$, we have $c_{rs}(T(L))=c_{rs}(T(R))$ for any $1\leq s\leq r\leq n$ if and only if $L=\sigma(R)$ for some $\sigma\in G$. In particular, $T(R)\in\mathcal{B}_{\mathcal{C}}(T(L))$ and $L=\sigma(R)$ for some $\sigma\neq  id$ implies the existence of $T(Q)\in\mathcal{B}_{\mathcal{C}}(T(L))$ with $q_{ki}=q_{kj}$ for some $1\leq j\neq i\leq n-1$.
\end{proof}

\subsection{Proof of Theorem II}

Consider a tableau
$T(L)$ satisfying the \it{FRZ-condition} and the corresponding character $\chi_{T(L)}$ of $\Ga$. Then the tableau
$T(L)$ is a realization of some admissible set $\mathcal{C}\subset \mathcal{R} $.

 Let $\sm_L=\Ker \chi_{T(L)}$. Since $V_{\mathcal{C}}(T(L))$ has  $\sm_L$ in its Gelfand-Tsetlin support and has a basis consisting of noncritical tableaux with standard action of the generators of $\gl_n$ by Theorems \ref{sufficiency of admissible} and \ref{thm action of gamma}, it has an simple subquotient $V(L)$ satisfying Theorem II. It shows the existence. Let $W$ be the $\gl_n$-submodule of  $V_{\mathcal{C}}(T(L))$ generated by $T(L)$.
 If $V$ is an simple Gelfand-Tsetlin module having  $\sm_L$ in its Gelfand-Tsetlin support and having a basis consisting of noncritical tableaux with standard action of the generators of $\gl_n$ then $V$ is a homomorphic image of
 $W$.
 The Gelfand-Tsetlin multiplicity of $\sm_L$ in $V_{\mathcal{C}}(T(L))$ is one by Theorem \ref {thm-mult}. This shows that $V$ is the unique module with desired properties. Hence $V\simeq V(L)$  proving the uniqueness.

\subsection{Examples of admissible modules}

The following proposition gives  a family of highest weight modules that can be realized as $V_{\mathcal{C}}(T(L))$ for some admissible set of relations $\mathcal{C}$.

\begin{proposition}\label{hw module}
Set $\lambda=(\lambda_1,\ldots ,\lambda_n)$. The simple highest weight module $L(\lambda)$ is an admissible Gelfand-Tsetlin module if  $\lambda_{i}-\lambda_j\notin \mathbb{Z}$ or $\lambda_{i}-\lambda_j> i-j$ for any $1\leq i<j\leq n-1$.
\end{proposition}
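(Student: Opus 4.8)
The plan is to realize $L(\lambda)$ directly as an admissible module $V_{\mathcal C}(T(L_0))$, where $T(L_0)$ is the Gelfand--Tsetlin tableau of the highest weight vector and $\mathcal C$ is the maximal set of relations from $\mathcal R\cup\mathcal R^0$ that $T(L_0)$ satisfies.

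First I would pin down $T(L_0)$. Using the formula for $E_{kk}$ in \eqref{Gelfand-Tsetlin formulas}, the tableau with $l^{0}_{ki}=\lambda_i-i+1$ for all $1\le i\le k\le n$ has weight $\lambda$; using the formulas for $E_{k,k+1}$, it is annihilated by each $E_{k,k+1}$, since the coefficient $e_{ki}(L_0)$ contains the factor $l^{0}_{ki}-l^{0}_{k+1,i}=0$ while its denominator $\prod_{j\ne i}\bigl(l^{0}_{ki}-l^{0}_{kj}\bigr)$ is nonzero: indeed $l^{0}_{ki}-l^{0}_{kj}=(\lambda_i-\lambda_j)-(i-j)$, which by hypothesis (for $i<j\le n-1$) is either a positive integer or a non-integer, hence $\ne 0$. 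So $T(L_0)$ is noncritical and is a highest weight vector of weight $\lambda$. The same computation gives $l^{0}_{ki}-l^{0}_{k+1,j}=(\lambda_i-\lambda_j)+(j-i)$, so $T(L_0)$ satisfies $(k,i)>(k+1,j)$ exactly when $i<j$ and $\lambda_i-\lambda_j\in\mathbb Z$ (the hypothesis $\lambda_i-\lambda_j>i-j$ supplying the positivity).

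Next I would set $\mathcal C$ to be the maximal subset of $\mathcal R\cup\mathcal R^0$ satisfied by $T(L_0)$ --- keeping, among a pair of mutually inverse top-row relations coming from a coincidence $l^{0}_{ni}=l^{0}_{nj}$, only one of them, which leaves $\mathcal B_{\mathcal C}(T(L_0))$ unchanged because the top row is fixed --- and prove it is admissible. Concretely $\mathcal C$ contains all relations $(k+1,i)\ge(k,i)$ (difference $0$ at $T(L_0)$) and, for each pair $i<j$ with $\lambda_i-\lambda_j\in\mathbb Z$, the relations $(k,i)>(k+1,j)$ and $(k+1,i)\ge(k,j)$ for all admissible $k$, together with the corresponding top-row relations. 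Its indecomposable components are the ``integral blocks'': two positions $(k,i),(k',j)$ lie in one component iff $\lambda_i-\lambda_j\in\mathbb Z$. I claim each component lies in $\mathfrak F$. An adjoining pair inside a component has the form $(k,i),(k,j)$ with $i<j$, $j\le k\le n-1$ (so $i\le k-1$); the first alternative of \eqref{condition for admissible} holds with $s=j$ and $t=i$, because $(k,i)>(k+1,j)\ge(k,j)$ and $(k,i)\ge(k-1,i)>(k,j)$ all belong to $\mathcal C$ --- the strict relations by the previous paragraph (using $i<j\le n-1$), the weak ones because they have difference $0$ at $T(L_0)$. Hence $\mathcal C$ is a union of disconnected sets from $\mathfrak F$, so $\mathcal C$ is admissible by Theorem \ref{sufficiency of admissible}. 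Since $l^{0}_{ki}-l^{0}_{kj}\in\mathbb Z$ precisely when $(k,i),(k,j)$ are in the same component, Remark \ref{good set} shows every tableau of $\mathcal B_{\mathcal C}(T(L_0))$ is noncritical, i.e. $T(L_0)$ is a $\mathcal C$-realization, and $V_{\mathcal C}(T(L_0))$ is an admissible Gelfand--Tsetlin module.

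Finally, because $\mathcal C$ is the maximal set of relations satisfied by $T(L_0)$, Proposition \ref{thm-irr} gives that $V_{\mathcal C}(T(L_0))$ is irreducible; it contains $T(L_0)$, which is annihilated by all $E_{k,k+1}$ and has weight $\lambda$, so (being irreducible) it is generated by $T(L_0)$ and is a highest weight module of highest weight $\lambda$, hence isomorphic to $L(\lambda)$, as claimed. The main work, and the place where the hypothesis is essential, is the admissibility verification: the condition $\lambda_i-\lambda_j>i-j$ for $i<j\le n-1$ is exactly what forces all the needed ``diagonal'' strict relations $(k,i)>(k+1,j)$ and $(k-1,i)>(k,j)$ to be present, which is what makes \eqref{condition for admissible} hold on every block. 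A mild extra point is that relations involving the last column of the top row are not controlled by the hypothesis, but they introduce no new adjoining pairs in rows $k\le n-1$ and so do not affect membership in $\mathfrak F$; and if one wishes to sidestep Proposition \ref{thm-irr}, irreducibility can instead be obtained by matching the weight multiplicities of $V_{\mathcal C}(T(L_0))$ with $\dim L(\lambda)_\mu$, which in this integrally dominant setting is the usual alternating-sum character formula.
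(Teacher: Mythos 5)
Your proof is correct and takes essentially the same route as the paper's: the paper also starts from the constant-column tableau $l_{ki}=\lambda_i-i+1$ and the relation set determined by the integral blocks of $\lambda$, only it writes that set directly in reduced form, $\mathcal{C}=\{(k+1,i_{rs})\geq(k,i_{rs})>(k+1,i_{r,s+1})\}$ over each block, which is precisely the reduced set equivalent to your maximal set. The one small technical point is that your maximal $\mathcal{C}$ is neither reduced nor cross-free, so to place its components in $\mathfrak{F}$ one should first pass to that reduced equivalent via Theorem \ref{equiv to reduced set} --- the four relations you exhibit for condition \eqref{condition for admissible} all survive the reduction, so nothing breaks --- and your explicit checks of \eqref{condition for admissible}, of irreducibility via Proposition \ref{thm-irr}, and of the identification with $L(\lambda)$ simply supply details that the paper leaves implicit.
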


\begin{proof}
Let $T(L)$ be a tableau such that $l_{ij}=\lambda_j-j+1$ and let $\mathcal{C}$ be the maximal set of relations satisfied by $T(L)$. Then $\mathcal{C}$ is admissible and $V_{\mathcal{C}}(T(L))$ is an simple highest weight module with highest weight $\lambda=(\lambda_1,\ldots ,\lambda_n)$ and $T(L)$ is a  highest weight vector.

\end{proof}


We recall the construction of generic Verma modules \cite{Maz1} which satisfy Proposition \ref{hw module}.

\begin{example}
Let $\mathcal {C}$ be the  following set of relations:
 $$\{((k+1,i);(k,i))\ |\ 1\leq i\leq k\leq n-1\}$$
and $T(L)$ be a $\mathcal {C}$-realization. By Theorem \ref{sufficiency of admissible} $\mathcal {C}$ is admissible.
Thus $V_{\mathcal{C}}(T(L))$ is a $\gl_n$-Verma module with highest weight
$\lambda=(\lambda_1,\ldots,\lambda_n)$, where $\lambda_i=l_{ni}+i-1$.

\end{example}

A weight $\gl_n$-module is called \it{dense} if for any weight $\lambda$ and any root $\alpha$ of $\gl_n$, $\lambda+ \alpha$ is also a root.
The following  dense modules  were constructed in \cite{Maz3}.

\begin{example}
Let $\mathcal {C}=\mathcal {C}^+\cup \mathcal {C}^-$ be the  following set of relations:
\begin{align*}
\mathcal {C}^+=\{((i+1,j);(i,j))\ |\ k\leq j\leq i\leq n-1\} \\
\mathcal {C}^-\{((i,j);(i+1,j+1))\ |\ k\leq j\leq i\leq n-1\},
\end{align*}
$2\leq k\leq n$, and $T(L)$ be a $\mathcal {C}$-realization. By Theorem \ref{sufficiency of admissible}, $\mathcal {C}$ is admissible.
Thus $V_{\mathcal{C}}(T(L))$ is a $\gl_n$-module.
When $k=2$,  $V_{\mathcal{C}}(T(L))$ is a dense module with finite weight multiplicities(\cite{Maz3}, Lemma 1). When $k>2$,  $V_{\mathcal{C}}(T(L))$ is a dense module with infinite weight multiplicities (\cite{Maz3}, Theorem 5).

\end{example}

\section{   Tableaux Gelfand-Tsetlin modules }
In this section we discuss the place of admissible Gelfand-Tsetlin modules among those Gelfand-Tsetlin modules which have a realization by Gelfand-Tsetlin formulas.

We start with the following natural question: if $\mathcal{C}$ is not an admissible set, is there a tableau $T(L)$   satisfying $\mathcal C$  such that $V_{\mathcal{C}}(T(L))$ is a $\gl_n$-module? The answer is positive. Here is an example.

\begin{example}
Let $\mathcal{C}$ be the set of relations  corresponding to the following  graph
\begin{center}
\begin{tabular}{c c}
\xymatrixrowsep{0.5cm}
\xymatrixcolsep{0.1cm}
\xymatrix @C=0.1em {
 \scriptstyle{(4,1)}\ar[rd]& &   & & &   &\scriptstyle{(4,4)} \\
  & \scriptstyle{(3,1)}  \ar[rd]   & &\scriptstyle{(3,2)}\ar[rd] & &\scriptstyle{(3,3)}\ar[ru]  \\
    &     &\scriptstyle{(2,1)}\ar[ru] \ar[rd] & &\scriptstyle{(2,2)}\ar[ru]  & \\
  &    & &\scriptstyle{(1,1)} \ar[ru] & &\\
}
\end{tabular}
\end{center}
and $T(L)$ the following tableau\\
\begin{center}
\hspace{1.5cm}\Stone{$3$}\Stone{$\frac{3}{2}$}\Stone{$\frac{3}{2}$}\Stone{$0$}\\[0.2pt]
 \hspace{1.5cm}\Stone{$3$}\Stone{$2$}\Stone{$1$}\\[0.2pt]
 \hspace{1.1cm} \Stone{$3$}\Stone{$2$}\\[0.2pt]
 \hspace{1.4cm}\Stone{$3$}\\
\end{center}

\

Then $T(L)$ satisfies $\mathcal{C}$ and $V_{\mathcal{C}}(T(L))$ is a $1$-dimensional $\gl_4$-module but the action of $\Gamma$ is not given by (\ref{action of Gamma in finite dimensional modules}).
\end{example}

This example suggests the following definition.

\begin{definition}
 We say that a Gelfand-Tsetlin $\mathfrak{gl}_{n}$-module $V$ is a \it{tableaux module} if it satisfies the following conditions:
\begin{itemize}
\item[(i)] $V$  has a basis  consisting of noncritical tableaux.
\item[(ii)] The action of $\mathfrak{gl}_{n}$ on $V$ is given by the Gelfand-Tsetlin formulas (\ref{Gelfand-Tsetlin formulas}).
\item[(iii)] All Gelfand-Tsetlin multiplicities of $V$ are  bounded by $1$.
\item[(iv)] The action  of $\Gamma$ on $V$ is given by the formula \eqref{action of Gamma in finite dimensional modules}.
  \end{itemize}
  \end{definition}

Hence, the module $V_{\mathcal{C}}(T(L))$ from the example above is not a tableaux module.
As we showed in the previous section any admissible Gelfand-Tsetlin module is a tableaux module. We believe that the converse  also holds, namely
we state the following conjecture.

\begin{conjecture}\label{conj-tableau}
If $V$ is a tableaux Gelfand-Tsetlin  $\gl_{n}$-module
then $V$ is isomorphic to some $V_{\mathcal{C}}(T(L))$
with admissible $\mathcal {C}$.
\end{conjecture}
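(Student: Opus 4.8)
The plan is to reconstruct, from an abstract tableaux module $V$ with basis $\mathcal{B}$ of noncritical tableaux, an admissible set $\mathcal{C}$ and a tableau $T(L)$ with $V\cong V_{\mathcal{C}}(T(L))$. First I would reduce to the linkage-connected case. Call two tableaux of $\mathcal{B}$ adjacent if they differ by a single $\pm\delta^{ki}$ with $1\le k\le n-1$, and linked if they are joined by a chain of adjacent tableaux across which the relevant Gelfand-Tsetlin coefficient from \eqref{Gelfand-Tsetlin formulas} is nonzero. Since every generator of $\gl_n$ moves a tableau by at most one $\delta^{ki}$, and since a submodule of a Gelfand-Tsetlin module of multiplicity one is spanned by a subset of $\mathcal{B}$, the span of each linkage class is a $\gl_n$-submodule and each summand is again a tableaux module; so it suffices to treat a $V$ whose basis is a single linkage class (a general tableaux module being a direct sum of these, with the conclusion read summand-by-summand as in Conjecture~1). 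For such a $V$ all tableaux of $\mathcal{B}$ share the top row — row $n$ is fixed by every generator — and differ pairwise by vectors in $\mathbb{Z}^{\frac{n(n-1)}{2}}$; fix one of them, $T(L)$.

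I would then define $\mathcal{C}$ to be the set of all relations in $\mathcal{R}\cup\mathcal{R}^{0}$ satisfied by every tableau of $\mathcal{B}$. By construction $\mathcal{B}\subseteq\mathcal{B}_{\mathcal{C}}(T(L))$; the crux is the reverse inclusion $\mathcal{B}_{\mathcal{C}}(T(L))\subseteq\mathcal{B}$. Granting it, $T(L)$ is a $\mathcal{C}$-realization — its shifts satisfying $\mathcal{C}$ are exactly the tableaux of $\mathcal{B}$, all noncritical — so $V=V_{\mathcal{C}}(T(L))$ as $\gl_n$-modules and it remains only to see that $\mathcal{C}$ is admissible. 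For the reverse inclusion I would argue by contradiction: if $T(R)$ satisfies $\mathcal{C}$, $R-L\in\mathbb{Z}^{\frac{n(n-1)}{2}}$, and $T(R)\notin\mathcal{B}$, walk along single $\delta^{ki}$-steps from $L$ toward $R$ and stop the first time we leave $\mathcal{B}$, say from $T(L')\in\mathcal{B}$ to $T(L'+\varepsilon\delta^{ki})\notin\mathcal{B}$ with $\varepsilon=\pm 1$. The module identities $[e_k,f_k]T(L')=h_kT(L')$ and $[e_k,[e_k,e_{k\pm 1}]]T(L')=[f_k,[f_k,f_{k\pm 1}]]T(L')=0$ hold; writing out the coefficient of $T(L')$, resp.\ of the neighbouring nonzero tableaux, and using that all structure constants are specializations of the rational functions $e_{ki},f_{ki},h_k$, exactly as in Lemma~\ref{action of center1} and Lemma~\ref{lemma1}, forces that the term dropped because $T(L'+\varepsilon\delta^{ki})$ is set to zero has vanishing limit, which happens only if $T(L')$ already lies on one of the ``walls'' $l'_{k+1,s}=l'_{k,i}$, $l'_{k-1,t}=l'_{k,i}+1$ (for $\varepsilon=+1$), their mirror images (for $\varepsilon=-1$), or — when $k=n-1$ — a coincidence inside the $n$-th row; each such wall is an instance of a relation in $\mathcal{R}\cup\mathcal{R}^{0}$. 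One then has to promote that wall from the single tableau $T(L')$ to all of $\mathcal{B}$, so that it lies in $\mathcal{C}$ and contradicts $T(R)$ satisfying $\mathcal{C}$.

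This last step — showing that the local obstructions along the boundary of $\mathcal{B}$ organize into relations valid uniformly on $\mathcal{B}$, with nothing ``exotic'' (no constraint outside the pattern of $\mathcal{R}\cup\mathcal{R}^{0}$) cutting out $\mathcal{B}$ — is where I expect the real difficulty to concentrate; it is essentially what keeps the statement a conjecture, and it is the point where the paper's case analysis for $n\le 4$ does its work. I would attack it by combining three ingredients: linkage-connectedness of $\mathcal{B}$, to propagate a wall along chains of adjacent tableaux; noncriticality of every tableau of $\mathcal{B}$, to exclude the spurious within-row coincidences (cf.\ Lemma~\ref{lemma1} and the defining condition \eqref{condition for admissible} of $\mathfrak{F}$); and Proposition~\ref{not admissible}, whose forbidden configurations must be absent since $V$ is a module. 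The delicate bookkeeping is to show that once these are imposed, a boundary tableau $T(L')$ from which the $\delta^{ki}$-step exits $\mathcal{B}$ can only do so because one of the allowed relations already holds there and, by the propagation, everywhere on $\mathcal{B}$.

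Granting $\mathcal{B}=\mathcal{B}_{\mathcal{C}}(T(L))$, admissibility of $\mathcal{C}$ follows in the now-standard way. By Theorem~\ref{sufficiency of admissible} it suffices to check that each indecomposable component of $\mathcal{C}$ satisfies condition \eqref{condition for admissible}. If it failed for an adjoining pair $(k,i),(k,j)$, then — since $\mathcal{B}$ is exactly the set of shifts of $T(L)$ satisfying $\mathcal{C}$ — one produces $T(R)\in\mathcal{B}$ with $r_{ki}-r_{kj}=1$ and no other row-difference equal to $1$, and the computation in the proof of Theorem~\ref{sufficiency of admissible} gives $[e_k,f_k]T(R)\ne h_kT(R)$, contradicting that $V$ is a module; the order and noncriticality requirements on the $\mathcal{R}^{0}$-part of $\mathcal{C}$ are excluded the same way, using that every tableau of $\mathcal{B}$ is noncritical. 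Hence $\mathcal{C}$ is admissible and $V\cong V_{\mathcal{C}}(T(L))$, which, applied to each linkage summand, establishes the statement.
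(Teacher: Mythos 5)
What you have written is a programme, not a proof, and you say so yourself: the reverse inclusion $\mathcal{B}_{\mathcal{C}}(T(L))\subseteq\mathcal{B}$ --- promoting the local ``walls'' at the boundary of $\mathcal{B}$ to relations from $\mathcal{R}\cup\mathcal{R}^{0}$ holding uniformly on $\mathcal{B}$ --- is exactly the open content of the statement. The paper itself does not prove this statement in general; it is stated as a conjecture and established only for $n\leq 4$, via Proposition \ref{nonadmissible}. So the honest self-assessment in your third paragraph is accurate, and the outline up to that point (reduction to linkage classes, defining $\mathcal{C}$ as the common relations of $\mathcal{B}$) is a reasonable framing of the problem. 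But an outline with an admitted gap at the decisive step cannot be accepted as a proof.

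There is also a concrete error in your final paragraph. You claim that if \eqref{condition for admissible} fails for some indecomposable component of $\mathcal{C}$, then some $T(R)\in\mathcal{B}$ violates $[e_k,f_k]T(R)=h_kT(R)$, contradicting that $V$ is a module. Example 6.1 of the paper refutes this: there $\mathcal{C}$ is not admissible, yet $V_{\mathcal{C}}(T(L))$ \emph{is} a $\gl_n$-module; what fails is condition (iv) of the definition of a tableaux module, namely that $\Gamma$ acts by \eqref{action of Gamma in finite dimensional modules}. Your argument never invokes condition (iv), so it cannot distinguish such examples from admissible ones. The necessity direction of Theorem \ref{sufficiency of admissible} that you cite chooses the realization $T(L)$ freely among all $\mathcal{C}$-realizations; in your setting the basis $\mathcal{B}$ is fixed in advance (fixed top row, fixed integrality classes), and a tableau with $r_{ki}-r_{kj}=1$ and no other coincidences need not exist in $\mathcal{B}$. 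This is precisely why the paper's partial result (Proposition \ref{nonadmissible}) has to analyze the $\Gamma$-eigenvalues --- showing that $U(\gl_{k+1})T(L)$ collapses and the $c_{k+1,t}$-eigenvalues disagree with $\gamma_{k+1,t}(L)$ --- rather than deriving a contradiction from the $\gl_n$-relations alone. To repair your argument you would need to route the contradiction through condition (iv), and even then the paper only succeeds for $n\leq 4$.
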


The conjecture is known to be true for $n=3$ (\cite{FGR1}, Remark 9.2). In the rest of this section we justify the conjecture for $n=4$.

\begin{lemma}\label{Lemma relation set}
Let $\mathcal{C}$ be an admissible set, $T(L)$  a tableau satisfying $\mathcal{C}$. If there is no tableau $T(R\pm\boldsymbol{\delta}^{n-1,r})$ satisfying $\mathcal{C}$ for any $1\leq r \leq n-1$ where $r_{ij}=l_{ij}$ for $1\leq j\leq i$, $n-1\leq i\leq n$, then $\mathcal{C}=\mathcal{S}$ and $l_{ni}-l_{n,i+1}=1$ for
$1\leq i\leq n-1$.
\end{lemma}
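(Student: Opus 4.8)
The plan is to analyze the hypothesis ``no tableau $T(R\pm\delta^{n-1,r})$ satisfies $\mathcal{C}$'' row by row, starting from the penultimate row $n-1$, and to use the fact that $\mathcal{C}$ is admissible — hence a union of disconnected sets from $\mathfrak{F}$ (Theorem~\ref{sufficiency of admissible}) — to force $\mathcal{C}$ to contain all the standard relations in $\mathcal{S}$ and nothing more, while simultaneously pinning the top row to have consecutive gaps $l_{ni}-l_{n,i+1}=1$. First I would fix $r$ and ask what prevents $T(L+\delta^{n-1,r})$ from lying in $\mathcal{B}_{\mathcal{C}}(T(L))$: since increasing $l_{n-1,r}$ by $1$ cannot violate any relation of the form $(n-1,r)\geq(\cdots)$ or $(n-1,r)>(\cdots)$ with a lower row, the only relations that can be violated are $(n,s)\geq(n-1,r)$ for some $s$ (forcing $l_{ns}=l_{n-1,r}$) — note relations inside row $n-1$ are excluded because $T(L)$ is noncritical and $\mathcal{C}$ is noncritical, so no pair in row $n-1$ can be at distance exactly $1$ in a way that... actually I must be careful: an intra-row relation $(n-1,r)>(n-1,r')$ with $l_{n-1,r}-l_{n-1,r'}=1$ would also be violated, but then I would invoke the $\mathfrak{F}$-condition \eqref{condition for admissible} to produce the anchoring relations with rows $n-2$ and $n$, and push the argument down. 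Symmetrically, $T(L-\delta^{n-1,r})$ being blocked forces a relation $(n-1,r)>(n,t)$ with $l_{n-1,r}=l_{nt}$, or an intra-row relation $(n-1,r')>(n-1,r)$ at distance $1$.

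The key steps, in order: (1) Show that for every $r$ there is an $s$ with $(n,s)\geq(n-1,r)\in\mathcal{C}$ and $l_{ns}=l_{n-1,r}$, and an $t$ with $(n-1,r)>(n,t)\in\mathcal{C}$ and $l_{n-1,r}=l_{nt}$; since $\mathcal{C}\subseteq\mathcal{R}\cup\mathcal{R}^0$ and after the normalization $\sigma(\mathcal{C})$ (so that $(n,i)\geq(n,j)$ only if $i<j$) the only consistent possibility is $s=r$ and $t=r+1$, giving $(n,r)\geq(n-1,r)>(n,r+1)\in\mathcal{C}$ for all $r$, i.e. $\mathcal{S}\subseteq\mathcal{C}$ at the top two rows, together with $l_{n-1,r}=l_{n,r}$ and $l_{n,r}-l_{n,r+1}=l_{n,r}-l_{n-1,r}+l_{n-1,r}-l_{n,r+1}$, which combined with $l_{ns}=l_{n-1,r}=l_{nt}$ gives the gaps $l_{ni}-l_{n,i+1}=1$. (2) Rule out any extra relations in $\mathcal{C}$ involving row $n-1$: an extra relation, by the $\mathfrak{F}$-reducedness/Definition~\ref{def reduced} bookkeeping and Proposition~\ref{order}, would either contradict uniqueness of the reduced set or force a critical tableau, contradicting admissibility/noncriticality. (3) Having fixed rows $n$ and $n-1$ completely, observe that since all of row $n-1$ now has entries equal to row $n$ with the forced pattern, the hypothesis propagates: one shows $T(L\pm\delta^{n-2,r})$ is blocked as well (as moving $l_{n-2,r}$ cannot be obstructed by the now-rigid lower rows unless $\mathcal{C}$ already contains the standard relations $(n-1,r)\geq(n-2,r)>(n-1,r+1)$), and by downward induction on the row index $\mathcal{C}=\mathcal{S}$ throughout.

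Actually the cleanest route for step (3) is probably to invoke Lemma~\ref{Lemma relation set}'s own hypothesis only for row $n-1$ and then \emph{derive} the analogous blocking for row $n-2$ from the structure of $\mathcal{B}_{\mathcal{C}}(T(L))$: once the top two rows are frozen, any $T(L+\delta^{n-2,r})\notin\mathcal{B}_{\mathcal{C}}(T(L))$ must be blocked by a relation with row $n-1$ (a relation with row $n-3$ cannot be violated by \emph{increasing} $l_{n-2,r}$), and conversely if no such relation existed the tableau would not be blocked — but we can show it must be blocked because otherwise $\mathcal{C}$ would not be maximal in the sense needed, or more directly, because the admissibility + noncriticality of $\mathcal{C}$ combined with $l_{n-1,\bullet}=l_{n,\bullet}$ forces the standard relations. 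I expect the main obstacle to be step (1): carefully enumerating which relations of $\mathcal{R}\cup\mathcal{R}^0$ can possibly be the obstruction to a single-box move in row $n-1$, and then using the $\mathfrak{F}$-condition \eqref{condition for admissible} together with the normalization convention to conclude that these obstructions must be precisely the standard relations $(n,r)\geq(n-1,r)>(n,r+1)$ with the entries in ``contact'' (equal where a $\geq$ becomes tight, differing by $1$ where a $>$ becomes tight) — this contact data is exactly what yields both $\mathcal{C}=\mathcal{S}$ and the consecutive top-row gaps, but disentangling the cross-free and reducedness hypotheses to exclude spurious extra relations will require the most care.
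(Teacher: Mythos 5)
Your overall strategy matches the paper's: identify what blocks each single-box move in row $n-1$, deduce that the obstructions are tight relations with row $n$, conclude the standard interleaving $(n,r)\geq(n-1,r)>(n,r+1)$ together with the contact data $l_{nr}=l_{n-1,r}=l_{n,r+1}+1$ (hence the gaps equal to $1$), and then propagate downward via condition \eqref{condition for admissible}. However, the central step is asserted rather than proven. You write that ``the only consistent possibility is $s=r$ and $t=r+1$,'' citing the normalization convention, but the normalization only orders each row by column index; it does not by itself exclude, say, the entries of row $n-1$ being distributed among several indecomposable components, each anchored to a different block of row-$n$ entries. The paper closes exactly this gap with a counting argument you do not have: if an indecomposable component contains $j_1$ entries of row $n-1$, then condition \eqref{condition for admissible} supplies at least $j_1-1$ interleaving entries of row $n$, and the blocking hypothesis applied to the extremal entries of the component supplies one more row-$n$ entry above the top and one below the bottom, so the component uses at least $j_1+1$ entries of row $n$. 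Summing over components and comparing with the $n$ available entries of row $n$ forces a single component and the exact standard interleaving. Without this (or an equivalent) count, your step (1) does not go through.

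Two smaller points. First, your enumeration of possible obstructions to $T(\cdot+\delta^{n-1,r})$ omits relations of the form $(n-2,v)>(n-1,r)$, which can also be violated by increasing $l_{n-1,r}$; these are eliminated only because the hypothesis quantifies over \emph{all} tableaux $T(R)$ agreeing with $T(L)$ on rows $n-1$ and $n$ (so row $n-2$ can be moved out of contact), a feature of the statement you never invoke since you work with $T(L)$ alone. (Your worry about intra-row relations in row $n-1$ is moot: $\mathcal{C}\subseteq\mathcal{R}\cup\mathcal{R}^0$ contains no such relations.) Second, your step (3) reaches for the wrong mechanism: the blocking hypothesis does not need to ``propagate'' to row $n-2$. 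Once the relations between rows $n$ and $n-1$ are the standard ones, each adjoining pair $(n-1,i),(n-1,i+1)$ realizes the pattern $(n-1,i)>(n,i+1)\geq(n-1,i+1)$ with $s=t$, so condition \eqref{condition for admissible} forces the first alternative, i.e. relations $(n-1,i)\geq(n-2,t)>(n-1,i+1)$, and iterating this down the rows yields $\mathcal{C}=\mathcal{S}$ directly, with no further appeal to blocking or maximality.
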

\begin{proof}
Suppose the entries in the $(n-1)$-th row are contained in $m$ disconnected subsets of $\mathcal{C}$.
Let $\mathcal{C}_1$ be a subset of $\mathcal{C}$. Assume $(n-1,j),1\leq j \leq j_1$ are contained in
$\mathfrak{V}(\mathcal{C}_1)$. By Condition \eqref{condition for admissible}, there are at least $j_1-1$ entries  of the $n$-th row  in
$\frak{V}(\mathcal{C}_1)$. Moreover, if  $((n ,s);(n-1,1))$ (respectively $ ((n-1, j_1); (n, s))$) is not in $\mathcal C_1$ then
$T(R\pm\boldsymbol{\delta}^{n-1,1})$ (respectively $T(R\pm\boldsymbol{\delta}^{n-1, j_1}))$ satisfies $\mathcal{C}$ which is a contradiction. Hence,  $\frak{V}(\mathcal C_1)$ contains at least $j_1+1$ elements of the $n$-th row. Thus,  all the entries of the $(n-1)$-th row are contained in the same disconnected subset of $\mathcal{C}$ and the relations between the $(n-1)$-th row and the $n$-th row are as follows:
$$\{((n,i);(n-1,i)),\ ((n-1,i);(n,i+1))\ |\ 1\leq i\leq n-1 \}.$$
Therefore $\mathcal{C}=\mathcal{S}$ by  Condition \eqref{condition for admissible}.
\end{proof}

\begin{proposition}\label{nonadmissible}
Let $n=4$, $\mathcal{C}$ be a non admissible set and $T(L)$ be a tableau satisfying $\mathcal{C}$.
If $\mathcal{C}$ is the maximal set of relations satisfied by  $T(L)$
and the Gelfand-Tsetlin formulas \eqref{Gelfand-Tsetlin formulas} define a $\gl_n$-module structure on $V_{\mathcal{C}}(T(L))$, then
$V_{\mathcal{C}}(T(L))$ is not  a tableaux module, that is the action of $\Gamma$ is not given by \eqref{action of Gamma in finite dimensional modules}.
\end{proposition}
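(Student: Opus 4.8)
The plan is to argue by contradiction: suppose $\mathcal{C}$ is non-admissible but $V_{\mathcal{C}}(T(L))$ is a $\gl_n$-module whose $\Gamma$-action \emph{is} given by \eqref{action of Gamma in finite dimensional modules}. Since $\mathcal{C}$ is non-admissible and $n=4$, some indecomposable component of $\mathcal{C}$ fails condition \eqref{condition for admissible}; by the proof of Theorem \ref{sufficiency of admissible}, this means there is a tableau $T(R)\in\mathcal{B}_{\mathcal{C}}(T(L))$ and a row index $k$ with $r_{ki}-r_{kj}=1$ for exactly one adjoining pair $(k,i),(k,j)$, and no other row-difference equal to $1$. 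For such $T(R)$ one computes directly that $[e_k,f_k]T(R)\neq h_k T(R)$ \emph{unless} some compensating structure is present — but in the admissible setting that compensation is exactly the pair of relations in \eqref{condition for admissible}, which by hypothesis is absent. So the first task is to make precise that $V_{\mathcal{C}}(T(L))$ being a module already forces this compensation combinatorially; concretely, I would show that if $V_{\mathcal{C}}(T(L))$ is a module then for every $T(R)$ in its basis and every $k$, whenever $r_{ki}-r_{kj}=1$ one must have $\#\{r_{k+1,i'},r_{k-1,j'}\mid r_{k+1,i'}=r_{kj},\ r_{k-1,j'}=r_{ki}\}\geq 2$, mirroring Lemma \ref{lemma1}. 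This is the genuinely delicate point: it requires extracting, from the single relation $[e_k,f_k]=h_k$ evaluated on a well-chosen basis tableau, that the denominators in the Gelfand-Tsetlin formulas vanishing at $r_{ki}=r_{kj}+1$ are cancelled, and that cancellation can only come from numerators associated to the rows $k\pm 1$, i.e.\ from relations of the type appearing in \eqref{condition for admissible}.

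The second step is to separate the analysis according to which row the offending difference sits in, using that $n=4$ makes the list of cases finite. If $k\le n-2=2$, then once we know the compensating relations are present we can compare $\mathcal{C}$ with the relevant set from $\mathfrak{F}$ and deduce, via Theorem \ref{equiv to reduced set} and the cross-elimination procedure, that the indecomposable component containing $(k,i),(k,j)$ is actually forced to lie in $\mathfrak{F}$ after all — contradicting non-admissibility — unless the compensating relations create a forbidden configuration such as one of the non-admissible diagrams in Proposition \ref{not admissible}. In the latter case one invokes Proposition \ref{not admissible} directly: a tableau realizing that sub-diagram gives $e_kf_k - f_ke_k \neq h_k$, so $V_{\mathcal{C}}(T(L))$ is not a module, contradiction. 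For $k=n-1=3$ the top-row relations from $\mathcal{R}^0$ enter, and here I would use Lemma \ref{Lemma relation set}: if the $(n-1)$-th row is not suitably connected to the top row then there is a basis tableau on which one can shift an $(n-1)$-row entry to produce the difference-one configuration with no compensation, again breaking $[e_{n-1},f_{n-1}]=h_{n-1}$; if it \emph{is} suitably connected, Lemma \ref{Lemma relation set} pushes us toward $\mathcal{C}=\mathcal{S}$ on that block, which is admissible.

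The final step is bookkeeping: having shown that every indecomposable component of $\mathcal{C}$ is forced either to lie in $\mathfrak{F}$ (equivalently, in $\mathfrak{F}_3$ up to $G$-action and reduction) or to contain a configuration making $V_{\mathcal{C}}(T(L))$ fail to be a module, and having assumed both that $V_{\mathcal{C}}(T(L))$ \emph{is} a module and that $\mathcal{C}$ is the maximal relation set satisfied by $T(L)$, we conclude $\mathcal{C}$ is a union of disconnected sets from $\mathfrak{F}$, hence admissible by Theorem \ref{sufficiency of admissible} — contradicting the hypothesis. Therefore no such $\mathcal{C}$ and $T(L)$ exist, i.e.\ whenever $V_{\mathcal{C}}(T(L))$ is a module given by the Gelfand-Tsetlin formulas with $\mathcal{C}$ non-admissible and maximal, the $\Gamma$-action cannot be given by \eqref{action of Gamma in finite dimensional modules}.

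I expect the main obstacle to be the first step: proving that being a module (rather than being \emph{admissible}) already forces the numerator-cancellation at difference-one pairs. Admissibility was defined precisely so that this holds for \emph{all} realizations simultaneously; here we only have one fixed $T(L)$ and its orbit $\mathcal{B}_{\mathcal{C}}(T(L))$, so one must argue that the relation $[e_k,f_k]=h_k$ on this particular module still pins down the local combinatorics around any difference-one pair. The restriction $n=4$ is what keeps the case analysis — and the interaction with the top-row relations $\mathcal{R}^0$ — manageable, and I would lean heavily on Proposition \ref{not admissible}, Lemma \ref{Lemma relation set}, and Lemma \ref{lemma1} to close the individual cases.
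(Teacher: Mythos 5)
Your overall strategy cannot work: you set out to derive a contradiction from ``$\mathcal{C}$ non-admissible and maximal'' together with ``$V_{\mathcal{C}}(T(L))$ is a module'', and none of your intermediate steps ever uses the assumption that $\Gamma$ acts by \eqref{action of Gamma in finite dimensional modules}. If such an argument succeeded, it would show the hypotheses of the proposition are vacuous; but they are not. The example opening Section 6 gives a non-admissible $\mathcal{C}$ for $n=4$ and a tableau $T(L)$ with rows $4,3,0,-1\mid 3,2,1\mid 3,2\mid 3$ in which every entry below the top row is locked by the relations, so that $\mathcal{B}_{\mathcal{C}}(T(L))=\{T(L)\}$, all raising and lowering operators act by zero, and $V_{\mathcal{C}}(T(L))$ is a perfectly good one-dimensional module. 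The same example refutes the key claim of your first step: the pair $l_{31}=3$, $l_{32}=2$ has difference one, yet $\#\{l_{4,i'},\,l_{2,j'}\mid l_{4,i'}=2,\ l_{2,j'}=3\}=1$, so being a module does \emph{not} force the compensation of Lemma \ref{lemma1}. The identity $[e_k,f_k]=h_k$ can hold for two reasons: either the numerators cancel as in Lemma \ref{action of center1}(iii), or, degenerately, because the shifted tableaux that would carry the singular terms all fall outside $\mathcal{B}_{\mathcal{C}}(T(L))$. Your plan only accounts for the first alternative, and it is the second one that actually occurs.

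The paper's proof exploits precisely that second alternative. Taking the minimal $k$ with a pair violating \eqref{condition for admissible}, it uses the relation $[e_k,f_k]=h_k$ to show that no shift $T(R\pm\delta^{k-1,s})$ can remain in the basis (the case of two difference-one pairs is excluded by the linear-in-$l_{k-1,s}$ argument giving $b_1=b_2=0$), then applies Lemma \ref{Lemma relation set} to conclude that $U(\gl_{k+1})T(L)$ is one-dimensional, and finally computes the eigenvalues of the generators $c_{k+1,t}$ on this degenerate module: they equal $\gamma_{k+1,t}(L')$ for a tableau $L'$ whose $(k+1)$-st row is $(l_{k,1},\dots,l_{k,k},l_{k,k}+1)$, which is not a $G$-permutation of the $(k+1)$-st row of $L$. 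That explicit eigenvalue computation is the actual content of the proposition and is entirely absent from your proposal; without it no conclusion about the $\Gamma$-action can be drawn.
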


\begin{proof}
Suppose $k$ is the minimal such that there exist $(k,i)$, $(k,j)$ that do not satisfy the Condition \eqref{condition for admissible}.
Then for any fixed top row there exists a tableau $T(L)$ such that $l_{ki}-l_{kj}=1$ and
$\#\{i'\ |\ l_{k+1,i'}=l_{kj}\}+\#\{j' |\  l_{k-1,j'}=l_{ki}\}=1$.

Now we consider $\gl_{k+1}$-module generated by $T(L)$. We have

\begin{equation}\label{coeff}
\begin{split}
    [e_k,f_k]T(R)
    =&\sum_{r,s=1}^{k}\Phi(L,-\boldsymbol{\delta}^{kr})f_{kr}(L)e_{ks}(L+\boldsymbol{\delta}^{kr})T(L-\boldsymbol{\delta}^{kr}+\boldsymbol{\delta}^{ks})\\
-&\sum_{r,s=1}^{k} \Phi(L,\boldsymbol{\delta}^{ks})e_{ks}(L)f_{kr}(L+\boldsymbol{\delta}^{ks})T(L-\boldsymbol{\delta}^{kr}+\boldsymbol{\delta}^{ks}).\\
\end{split}
\end{equation}
     The same formula holds for the tableau $T(v)$ with distinct variable entries (considered as a generic tableau).
     Then the coefficient of $T(L)$ in \eqref{coeff} is the same as the coefficient of $T(L)$ in $h_kT(L)$ if and only if
     \begin{equation}\label{coefficient}
      \begin{split}
\lim_{v\rightarrow l}\left( \sum_{\Phi(L,-\boldsymbol{\delta}^{jr})=0} f_{kr}(v)e_{kr}(v+\boldsymbol{\delta}^{ir})
- \sum_{\Phi(L,\boldsymbol{\delta}^{ir})=0}  e_{kr}(v)f_{kr}(v+\boldsymbol{\delta}^{kr})\right)=0.\\
\end{split}
\end{equation}

     By Lemma \ref{action of center1}, (i) and (ii) one has
  \begin{align*}
\lim_{v\rightarrow l}\left( \sum_{\Phi(L,-\boldsymbol{\delta}^{kr})=0} f_{kr}(v)e_{kr}(v+\boldsymbol{\delta}^{kr})
- \sum_{\Phi(L,\boldsymbol{\delta}^{kr})=0}  e_{kr}(v)f_{kr}(v+\boldsymbol{\delta}^{kr})\right)\\
  =\lim_{v\rightarrow l}
  \left( \sum f_{k,i}(v)e_{k,i}(v+\boldsymbol{\delta}^{k,i})
 - e_{k,j}(v)f_{k,j}(v+\boldsymbol{\delta}^{k,j})\right),\\
\end{align*}
 where  the sum runs  over all  pairs $(i,j)$ such that $l_{ki}-l_{kj}=1$ and $(k,i),(k,j)$ do not satisfy condition
\eqref{condition for admissible}. If there is only one such  pair $(k,i), (k,j)$ then by direct computation we obtain
$\lim\limits_{v\rightarrow l} f_{k,i}(v)e_{k,i}(v+\boldsymbol{\delta}^{k,i})
 - e_{k,j}(v)f_{k,j}(v+\boldsymbol{\delta}^{k,j})\neq 0$.
 If there are two such pairs then there exists  $T(Q)=T(R\pm\boldsymbol{\delta}^{k-1,s})$ satisfying $\mathcal{C}$ where $r_{ij}=l_{ij}$ for $1\leq j\leq i$, $k-1\leq i\leq k+1$.
For every such pair $(k,i), (k,j)$,  $$\lim\limits_{v\rightarrow l} f_{k,i}(v)e_{k,i}(v+\boldsymbol{\delta}^{k,i})
 - e_{k,j}(v)f_{k,j}(v+\boldsymbol{\delta}^{k,j})$$  can be written as
 $(l_{k-1,s}-a)b$, where $a=l_{kj}$, $b$ is a rational function in $l$.
 Since $V_{\mathcal{C}}(T(L))$ is a module, one has
$ (l_{k-1,s}-a_1)b_1+(l_{k-1,s} -a_2)b_2=0$ and
$ (l_{k-1,s}\pm1-a_1)b_1+(l_{k-1,s}\pm1 -a_2)b_2=0$.
Then $b_1+b_2=0$, $a_1b_1+a_2b_2=0$ and $a_1\neq a_2$, Thus $b_1=b_2=0$ which is a contradiction.
Hence, there is no tableau $T(Q)=T(R\pm\boldsymbol{\delta}^{k-1,s})$ satisfying $\mathcal{C}$ where $r_{ij}=l_{ij}$ for $1\leq j\leq i$, $k-1\leq i\leq k+1$.
By Lemma \ref{Lemma relation set}, $U(\gl_{k+1})T(L)$ is one dimensional with unique tableau $T(L)$.
Let $T(L')$ be a tableau with $l_{st}'=l_{st}$ for $1\leq t\leq s \leq k$, $l_{k+1,t}'=l_{k,t}$ for $1\leq t \leq k$ and
$l_{k+1,k+1}'=l_{k,k}+1$. One has  $c_{k+1,t}T(L)=\gamma_{k+1,t}(L')T(L)$, $t=1, \ldots, k+1$. We see that the action of $\Gamma$ is different from
 \eqref{action of Gamma in finite dimensional modules}. Thus the $1$-dimensional module $V_{\mathcal{C}}(T(L))$ is not
  a tableaux     module.
\end{proof}

\begin{remark} Let $\mathcal{C}$ be any set of relations.
\begin{itemize}
\item[(i)]
 If $\mathcal{C}$ a non admissible set of relations  then $V_{\mathcal{C}}(T(L))$ is not a $\gl_n$-module for some $T(L)$ satisfying $\mathcal{C}$. In the case  $n=3$, $V_{\mathcal{C}}(T(L))$ is not a $\gl_n$-module for any $T(L)$ for which $\mathcal{C}$ is the maximal set of relations.
\item[(ii)]
 $V= span_{\C} \{T(L)=(3,0|2)\}$ is a $1$-dimensional tableaux module. Its basis is not equal to any set
${\mathcal B}_{\mathcal{ C}}(T(L))$.
\end{itemize}
\end{remark}




\section{Gelfand-Graev continuation}

In  this section we prove  necessary and sufficient condition for the Gelfand and Graev's continuation.

For the sake of convenience we will use our notation to describe Gelfand and Graev's continuations.
In  \cite{GG} the standard
labelling of tableaux given in \cite{GT} is slightly modified and the action of generating elements of the Lie algebra is given
on this new basis.  To each $k=1, 2,\ldots, n-1$ we assign a pair of integers $\{i_k,i_k'\}$ where $i_k\in\{0,1,\ldots,k\}, i'_k\in\{1,2,\ldots,k+1\}$, and $i_k<i_k'$.
For each such set of indexes it is constructed a Hilbert space
$H\{i_k,i_k'\}$ having an orthonormal basis labeled by the set of all possible tableaux $T(L)$ with integral entries where the top row is fixed and the other components satisfy the following set of inequalities:
{
\begin{align}
&l_{k,j}>l_{k,j+1},                    & j<k\leq n\\
&l_{k+1,j-1}\geq l_{kj}> l_{k+1,j},    & j\leq i_k\\
&l_{k+1,j}\geq l_{kj}> l_{k+1,j+1},    & i_k<j< i_k'\\
&l_{k+1,j+1}\geq l_{kj}> l_{k+1,j+2},  & j\geq i_k'.
\end{align}
}
    Let $\mathcal {C}$ be the  following set of relations:
\begin{align*}
&\{((n,i);(n,i+1))|i=i_{n-1}',i_{n-1}'\}\\
\cup&\{(k+1,j-1);(k,j)), ((k,j);(k+1,j))\ |\ j\leq i_{k}\}\\
\cup&\{((k+1,j);(k,j)),((k,j),(k+1,j+1))\ |\ i_{k}< j< i'_{k}\}\\
\cup& \{((k+1,j+1);(k,j)),((k,j);(k+1,j+2))\ |\ i'_{k}\leq j\}.
\end{align*}

\begin{remark} Every tableau $T(L)$ satisfying the GG-condition is a $\mathcal{C}$-realization. Conversely, not every
$\mathcal{C}$-realization satisfies the  GG-condition.
For example, if $n=4$, $i_t=0, i_t'=t+1$ for $t=1,2$, $i_3=1$, $i_3'=3$ then
the graph associated with $\mathcal{C}$  is as follows
\begin{center}
\begin{tabular}{c c}
\xymatrixrowsep{0.5cm}
\xymatrixcolsep{0.1cm}\xymatrix @C=0.1em {
 \scriptstyle{(4,1)}\ar[rr]& &\scriptstyle{(4,2)} \ar[rd]   & &\scriptstyle{(4,3)}\ar[rr] &   &\scriptstyle{(4,4)}\ar[ld] \\
  & \scriptstyle{(3,1)} \ar[lu] \ar[rd]   & &\scriptstyle{(3,2)}\ar[rd]\ar[ru]  & &\scriptstyle{(3,3)} \\
    &     &\scriptstyle{(2,1)}\ar[ru] \ar[rd] & &\scriptstyle{(2,2)}\ar[ru]  & \\
  &    & &\scriptstyle{(1,1)} \ar[ru] & &\\
}\\

\end{tabular}
\end{center}
A tableau $T(L)$ with $l_{41}=l_{42}$ and $l_{43}=l_{44}$ is  a $\mathcal{C}$-realization but
 it does not satisfy the GG-condition.
\end{remark}

Lemire and Patera \cite{LP} gave counterexamples and showed that for certain sets of indexes
the Gelfand-Tsetlin formulas  do not define $\gl_n$-module. In fact, they claimed (though without proof) that a necessary
condition to have a $\gl_n$-module on
$H\{i_k,i_k'\}$  is that for each $k=2, 3,\ldots, n-1$  one has
\begin{equation}\label{LP condition}
i_{k-1},i_{k-1}'\in\{0, i_k,i_k'-1, k\}.
\end{equation}

We will call  the {\bf LP-condition}, the GG-condition together with this restriction. In \cite{LP} it was given an example of a tableau for $\gl_3$ which does not satisfy the
LP-condition and does not generate a module. This is not sufficient  to conclude that $H\{i_k,i_k'\}$ is not  a module if it contains a basis tableau  which does not
satisfies the LP-condition, since  $H\{i_k,i_k'\}$ not only depends on the choice of pairs $i_k$, $i_k'$
but also on the top row of the tableau.  Lemire and Patera showed that the LP-condition is sufficient to have a module structure on
$H\{i_k,i_k'\}$  for $\gl_3$ and in some cases for $\gl_4$.

The following is clear

\begin{lemma}\label{choice of tab} Fix $i_k,i_k'$.
If $r_{ij}$ $i\geq k$, $1\leq j\leq i$ satisfy the GG-condition  then there exists
a tableau  in  $H\{i_k,i_k'\}$ that satisfies the GG-condition and $l_{ij}=r_{ij}$ $i\geq k$ $1\leq j\leq i$.
\end{lemma}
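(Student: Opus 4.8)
The plan is to build the required tableau by filling in the missing rows $1,2,\dots,k-1$ one at a time, from the top down, choosing each new entry far enough from the already-fixed entries so that all the GG-inequalities become automatic. Concretely, I would first observe that the GG-condition, read row by row, only constrains row $m$ in terms of rows $m$ and $m+1$ (the strict increase $l_{m,j}<l_{m,j+1}$ within the row, and the interlacing-type inequalities $l_{m+1,\ast}\geq l_{m,j}>l_{m+1,\ast'}$ linking row $m$ to row $m+1$). So once rows $k,k+1,\dots,n$ are fixed (equal to the prescribed $r_{ij}$, which already satisfy the GG-condition among themselves), the construction of row $m$ for $m=k-1,k-2,\dots,1$ is a local problem: given the already-constructed row $m+1$, produce a strictly increasing integer sequence $l_{m,1}<\cdots<l_{m,m}$ satisfying the relevant inequalities against row $m+1$.

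The key step is to check that such a choice always exists. For the band of indices $j$ with $i_m\leq j\leq i_m'$ the inequality is the usual interlacing $l_{m+1,j}\geq l_{m,j}>l_{m+1,j+1}$, which since row $m+1$ is strictly increasing has at least one integer solution in the half-open interval $(l_{m+1,j+1},l_{m+1,j}]$; for the shifted bands ($j\leq i_m$ or $j\geq i_m'$) the intervals are $(l_{m+1,j},l_{m+1,j-1}]$ and $(l_{m+1,j+2},l_{m+1,j+1}]$ respectively, again nonempty and again stacked in increasing order as $j$ increases, precisely because $i_m<i_m'$ guarantees the bands overlap consistently at the boundary indices. Choosing, say, the largest admissible integer in each interval yields $l_{m,1}<\cdots<l_{m,m}$ automatically, so row $m$ satisfies the full GG-condition relative to row $m+1$. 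Iterating down to $m=1$ completes the tableau, and by construction $l_{ij}=r_{ij}$ for all $i\geq k$.

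I do not expect any real obstacle here — this is exactly the sense in which the lemma is labelled ``clear.'' The only mild bookkeeping point is to make sure that the within-row strict monotonicity $l_{m,j}<l_{m,j+1}$ is compatible with the cross-row inequalities at the indices where the band shifts (around $j=i_m$ and $j=i_m'$); but the hypothesis $i_m<i_m'$ together with the strict increase of the previously built row $m+1$ makes the relevant intervals nest in the correct order, so picking the maximal integer in each interval (or equally the minimal, with the opposite sweep direction) never forces a collision. Hence the induction goes through and the tableau lies in $H\{i_k,i_k'\}$ with the prescribed top $n-k+1$ rows.
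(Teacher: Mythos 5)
The paper offers no proof of this lemma (it is dismissed as ``clear''), and your top-down, row-by-row construction is exactly the argument it implicitly relies on: the GG-condition only couples row $m$ to row $m+1$, the admissible values for $l_{m,j}$ form a nonempty half-open interval between consecutive entries of row $m+1$ (unbounded at $j=1$ and $j=m$), and these intervals are pairwise disjoint and monotonically ordered in $j$, including across the band boundaries at $i_m$ and $i_m'$, so any selection produces a strictly monotone row satisfying the GG-condition against the row above, and the downward induction from row $k-1$ to row $1$ closes. The only slip is orientational: with the interlacing inequalities as written, e.g.\ $l_{m+1,j}\geq l_{m,j}>l_{m+1,j+1}$, the rows below the top must be strictly \emph{decreasing} in $j$, not increasing as you assert --- the interval $(l_{m+1,j+1},l_{m+1,j}]$ is nonempty precisely because $l_{m+1,j}>l_{m+1,j+1}$ --- but this confusion is inherited from the paper's own inconsistent listing of $l_{k,j}<l_{k,j+1}$ among the GG inequalities and does not affect the substance of your argument.
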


\

\noindent{\bf Proof of Theorem I.}
If  $i_k, i_k'$ satisfy {\bf LP-condition}, then $\mathcal{C}$ is admissible by Theorem \ref{sufficiency of admissible}. Any tableau satisfying GG-condition is a $\mathcal{C}$-realization.
Thus $H\{i_k,i_k'\}\simeq V_{\mathcal{C}}(T(L))$ is a $\gl_n$-module.

Now we show that the LP-condition is necessary.
Let  $k$ be the maximal such that $\{i_{k-1},i_{k-1}'\}\varsubsetneq\{0, i_k,i_k'-1, k\}$.
\begin{itemize}
\item[(i)] Assume that one of $\{i_{k-1},i_{k-1}'\}$ is not in $\{0, i_k,i_k'-1, k\}$. Without loss of generality we assume that
$i_{k-1}\notin \{0, i_k,i_k'-1, k\}$. By Lemma \ref{choice of tab} for any fixed top row there exists a tableau that satisfies the GG-condition in
$H\{i_k,i_k'\}$ and $l_{k,i_{k-1}-1}-l_{k,i_{k-1}}=1$,
    $l_{k-1,j}\neq l_{k,i_k}$ for any $j$. The pair
    $(l_{k,i_{k-1}-1}, l_{k,i_{k-1}})$ is the only pair in the $k$-th row such that
    $l_{k,i_{k-1}-1}-l_{k,i_{k-1}}=1$ and

   $$\#\{i'\ |\ l_{k+1,i'}=l_{k,i_{k-1}-1},\}+
   \#\{j'\ |\ l_{k-1,j'}=l_{k,i_{k-1}}\}=1.$$
    By direct computation one has

         \begin{equation}
      \begin{split}
\lim_{v\rightarrow l}\left( \sum_{\Phi(L,-\boldsymbol{\delta}^{jr})=0} f_{kr}(v)e_{kr}(v+\boldsymbol{\delta}^{ir})
- \sum_{\Phi(L,\boldsymbol{\delta}^{ir})=0}  e_{kr}(v)f_{kr}(v+\boldsymbol{\delta}^{kr})\right)\\
=\lim_{v\rightarrow l}
  \left(   f_{k,i_{k-1}-1}(v)e_{k,i_{k-1}-1}(v+\boldsymbol{\delta}^{i,i_{k-1}-1})
 - e_{k,i_{k-1}}(v)f_{k,i_{k-1}}(v+\boldsymbol{\delta}^{k,i_{k-1}})\right)\\
\end{split}
\end{equation}
     which is nonzero. Thus $H\{i_k,i_k'\}$ is not a module.

\item[(ii)] Suppose both of $\{i_{k-1},i_{k-1}'\}$ are not in $\{0, i_k,i_k'-1, k\}$.
By Lemma \ref{choice of tab} for any fixed top row there exists a tableau that satisfies the GG-condition in  $H\{i_k,i_k'\}$
 and $l_{k,i_{k-1}-1}-l_{k,i_{k-1}}=1$, $l_{k,i_{k-1}'+1}-l_{k,i_{k-1}'+2}=1$ and
    $l_{k-1,j}\neq l_{k,i_{k-1}}$, $l_{k-1,j}\neq l_{k,i_{k-1}'+2}$ for any $j$.
  By Lemma \ref{action of center1}, (i) and (ii), one has
\begin{equation}
      \begin{split}
\lim_{v\rightarrow l}\left( \sum_{\Phi(L,-\boldsymbol{\delta}^{jr})=0} f_{kr}(v)e_{kr}(v+\boldsymbol{\delta}^{ir})
- \sum_{\Phi(L,\boldsymbol{\delta}^{ir})=0}  e_{kr}(v)f_{kr}(v+\boldsymbol{\delta}^{kr})\right)\\
 =\lim_{v\rightarrow l}\left(\sum_{r=i_{k-1}+1,i_{k-1}'+2}f_{kr}(v)e_{kr}(v+\boldsymbol{\delta}^{ir})
-\sum_{r=i_{k-1},i_{k-1}'+1}  e_{kr}(v)f_{kr}(v+\boldsymbol{\delta}^{kr})\right).\\
\end{split}
\end{equation}
By Lemma \ref{choice of tab}, $l_{k-1,i_{k}}$ has at least $2$ choices if we fix all other $l_{st}$, $1\leq t\leq s$, $k-1\leq s \leq n$. Applying same argument as in the proof of Proposition \ref{nonadmissible} one can show that it is impossible to have zero limit  for all these tableaux. Thus $H\{i_k,i_k'\}$ is not a module.
\end{itemize}

%




\end{document}